\newcommand{\MM}{\mathcal{M}}
\newcommand{\RR}{\mathbb{R}}
\newcommand{\HH}{\mathbb{H}}
\newcommand{\SSS}{\mathbb{S}}
\newcommand{\p}{\partial}
\newtheorem{prop}{Proposition}[section]
\newtheorem{thm}[prop]{Theorem}
\newtheorem{lem}[prop]{Lemma}
\newtheorem{cor}[prop]{Corollary}
\newtheorem{defn}[prop]{Definition}
\theoremstyle{remark}
\newtheorem{rem}[prop]{Remark}
\newtheorem{ques}[prop]{Question}
\newtheorem*{ack}{Acknowledgments}
\newcommand{\lemref}[1]{Lemma \ref{#1}}
\newcommand{\corref}[1]{Corollary \ref{#1}}
\newcommand{\propref}[1]{Proposition \ref{#1}}
\numberwithin{equation}{section}
\begin{document}
\title[On a Class of Fully Nonlinear Curvature Flows in Hyperbolic Space]{On a Class of Fully Nonlinear Curvature Flows in Hyperbolic Space}

\author[F.Hong]{Fang Hong}
\address{School of Gifted Young, University of Science and Technology of China, Hefei, 230026,
	P. R. China}
\email{\href{mailto:hongf@mail.ustc.edu.cn}{hongf@mail.ustc.edu.cn}}

%\date{\today}
\subjclass[2010]{53C44, 53C21}
\keywords{hyperbolic space, fully nonlinear curvature flow, $k$-curvature}
%\thanks{}

%-------------------------------------------------------------------------
\begin{abstract}
In this paper, we study a class of flows of closed, star-shaped hypersurfaces in hyperbolic space $\mathbb{H}^{n+1}$ with speed $(\sinh r)^{{\alpha}/{\beta}} \sigma_{k}^{{1}/{\beta}}$, where $\sigma_{k}$ is the $k$-th elementary symmetric polynomial of the principal curvatures, $\alpha$, $ \beta $ are positive constants and $r$ is the distance from points on the hypersurface to the origin. We obtain convergence results under some assumptions of $k$, $\alpha$ and $ \beta $. When $k = 1 , \alpha > 1 + \beta$, and the initial hypersurface is mean convex, we prove that the mean convex solution to the flow for $ k=1 $ exists for all time and converges smoothly to a sphere. When $1\leq k \leq n, \alpha > k+\beta$, and the initial hypersurface is uniformly convex, we prove that the uniformly convex solution to the flow exists for all time and converges smoothly to a sphere. In particular, we generalize Li-Sheng-Wang's results in \cite{L-S-W-2020} from Euclidean space to hyperbolic space.
\end{abstract}

\maketitle
\tableofcontents

\section{Introduction}\label{sec:1}

Let $\mathcal{M}_{0}$ be a smooth ,closed, and star-shaped hypersurface in $\HH^{n+1}$ which encloses the origin, $n \geq 2$. In this paper, we study the following geometric flow,
\begin{equation}\label{hflow}
	\left\{\begin{aligned}
		\frac{\partial}{\partial t} X(x, t) &=-{\left(\phi(r)\right)}^{\frac{\alpha}{\beta}} \sigma_{k}^{\frac{1}{\beta}}(\kappa) \nu(x, t)+\gamma V(x, t), \\
		X(x, 0) &=X_{0}(x) \in \MM_{0},
	\end{aligned}\right.
\end{equation}
where $ \alpha,\beta $ are positive constants, $ \gamma = \tbinom{n}{k} $, $\sigma_{k}$ is the $k$-curvature, given by
$$
\sigma_{k}(\cdot, t)=\sum_{i_{1}<\cdots<i_{k}} \kappa_{i_{1}} \cdots \kappa_{i_{k}}.
$$
and $\kappa_{i}=\kappa_{i}(\cdot, t)$ are the principal curvatures of the hypersurface $\mathcal{M}_{t}$, parametrized by $X(\cdot, t): \mathbb{S}^{n} \rightarrow \HH^{n+1}$, and $\nu(\cdot, t)$ is the unit outer normal of $\mathcal{M}_{t}$ at $X(\cdot, t)$. We denote by $r$ the distance from the point $X(x, t)$ to the origin, $\phi(r)=\sinh r$, and $ V(x,t) = \phi(r)\frac{\p}{\p r} $ is a conformal Killing vector field on $ \HH^{n+1} $.  We shall call $r$ the radial function of $\mathcal{M}_{t}$ in this paper.

The same kind of flow as \eqref{hflow} has been studied by Li, Sheng and Wang in the Euclidean space \cite{L-S-W-2020}. The purpose of this paper is to generalize their results from the Euclidean space to the hyperbolic space. The motivation of studying the flow \eqref{hflow} is its relationship with the prescribed curvature measure problem in hyperbolic space which has been studied recently by Fengrui Yang \cite{Yang20}.
%Instead of studying the flow \eqref{flow1}, we study the following normalized flow
%\begin{equation}\label{hflow}
%	\left\{
%	\begin{aligned}
%		\frac{\partial }{\partial t}X(x, t)&=-(\phi(r))^{\frac{\alpha}{\beta}} \sigma_{k}^{\frac{1}{\beta}}(x, t) \nu(x,t)+\gamma V(x, t), \\
%		X(\cdot, 0)&=X_{0},
%	\end{aligned}
%	\right.
%\end{equation}
%where

\begin{defn}$\ $

\begin{enumerate}
  \item We say that a smooth closed hypersurface $\mathcal{M}$ is $ k $-convex, $k=1,\cdots,n$, if at every point its principal curvatures satisfy $\kappa=(\kappa_{1},\cdots,\kappa_{n}) \in \Gamma_{k}^{+}$, where
$$
	\Gamma_{k}^{+}:=
	\left\{ (x_1,\cdots,x_n)\in \RR^{n}:\sigma_{i}(x_1,\cdots,x_n)>0, \text{  for all  } 1\leq i\leq k \right\},
$$
and $ \sigma_{k} $ is the $ k $-th elementary symmetric polynomial. For $k=1$, $1$-convex is also called mean convex.
    \item We say that a smooth closed hypersurface $\mathcal{M}$ is uniformly convex if its principal curvatures satisfy $\kappa_i>0$ for all $i=1,\cdots,n$.
    \item A hypersurface $\mathcal{M}$ in the hyperbolic space $\mathbb{H}^{n+1}$ is called star-shaped (with respect to the origin), if its support function $u=\langle V, \nu\rangle$ is positive everywhere, where $V=\phi(r)\frac{\p}{\p r}$.
\end{enumerate}
\end{defn}

\subsection{Main results}
We now state our main results.

\begin{thm}\label{k-convex.converge}
	Let $\MM_{0}$ be a smooth, closed, mean convex and star-shaped hypersurface in $\mathbb{H}^{n+1}$ enclosing the origin. If $ 0<\beta\leq 1 $, $\alpha > \beta+1$, the flow \eqref{hflow} with $ k=1 $ has a unique smooth, mean convex and star-shaped solution $\MM_{t}$ for all time $t>0$. The flow converges exponentially to a sphere centered at the origin in the $C^{\infty}$ topology.
\end{thm}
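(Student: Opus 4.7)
The plan is to reduce the flow \eqref{hflow} to a scalar fully nonlinear parabolic PDE for the radial function $r(x,t) : \mathbb{S}^n \times [0,T) \to \mathbb{R}^+$, using that star-shapedness makes $\mathcal{M}_t$ a radial graph. The geometric evolution is determined by the normal speed $-\phi(r)^{\alpha/\beta}\sigma_1^{1/\beta} + n u$ (here $k = 1$, $\gamma = n$, and $u = \langle V,\nu\rangle$); via the standard formulas for $\nu$, $u$, and $H = \sigma_1$ in terms of $r$, $\nabla r$ and $\nabla^2 r$ on the sphere, this produces an equation of the form $\partial_t r = Q(r,\nabla r,\nabla^2 r)$ with $Q$ concave in the Hessian argument on the mean-convex cone. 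Short-time existence then follows from the standard parabolic theory, provided mean convexity is preserved so that the linearization stays uniformly elliptic.

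I would first establish two-sided $C^0$ bounds on $r$ by applying the maximum principle at spatial extrema: at a maximum the tangent hypersphere has curvature $\coth r_{\max}$, so $H \geq n\coth r_{\max}$, and evaluating the equation yields an ordinary differential inequality forcing $r_{\max}$ to be bounded above — the assumption $\alpha > 1 + \beta$ is precisely what makes the contraction term dominate the conformal term for large radius — while a symmetric computation at a minimum produces a uniform positive lower bound. A gradient estimate, equivalent to a uniform positive lower bound on $u$, is then obtained by the parabolic maximum principle applied to a well-chosen auxiliary quantity such as $\phi'(r)/u$, with the $C^0$ bounds confining all geometric quantities to a compact regime.

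Preservation of mean convexity is established next by deriving the evolution equation of $H$ and applying the scalar parabolic maximum principle, using that the signs of the reaction terms are favourable when $0 < \beta \leq 1$. The main obstacle, as in the Euclidean treatment of Li--Sheng--Wang, is the upper bound on $H$. Following their strategy I would work with an auxiliary function of the form $W = H \cdot g(r,u)$, where $g$ is a positive function engineered so that at an interior maximum of $W$ the bad gradient and zero-order terms in the evolution inequality are dominated by the good concentrating term $-c\,\phi^{\alpha/\beta} H^{1 + 1/\beta}$ produced by differentiating the speed. The hypothesis $\alpha > 1 + \beta$ is exactly what allows Young's inequality to balance the exponents and close this estimate. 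Once $H$ is bounded above, the PDE is uniformly parabolic, and Krylov--Safonov together with standard parabolic Schauder theory provide uniform $C^\infty$ bounds, so the flow exists for all $t > 0$.

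For convergence to a geodesic sphere centered at the origin, I plan to exhibit a monotone geometric functional along the flow — for instance, a hyperbolic quermassintegral or a weighted area — whose time derivative is nonpositive with equality exactly on such spheres, by combining the hyperbolic Minkowski identity with an Alexandrov--Fenchel type inequality. Together with the uniform regularity this forces $\mathcal{M}_t$ to subconverge in $C^\infty$ to a geodesic sphere of some radius $r_\infty$; exponential $C^\infty$ convergence then follows by linearizing the scalar PDE at $r = r_\infty$ and invoking the spectral gap of the linearized operator restricted to the orthogonal complement of the constants on $\mathbb{S}^n$.
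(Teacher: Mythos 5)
The overall scaffolding of your proposal matches the paper up through the $C^0$, $C^1$, and $H$-bounds: reduction to the scalar radial equation, barrier arguments at spatial extrema for $C^0$ using $\alpha > \beta + 1$, a maximum-principle gradient estimate, a Tso-type argument for the upper bound on the speed, and a lower bound on $H$ to keep the operator non-degenerate. However, there is a genuine gap at the $C^2$ level. For a merely mean-convex hypersurface with $0<\beta<1$, a two-sided bound on $H=\sigma_1$ does \emph{not} bound the second fundamental form: individual principal curvatures can diverge with opposite signs while their sum stays bounded, so the radial function $r$ need not be $C^{1,1}$. Consequently the step ``once $H$ is bounded, Krylov--Safonov and Schauder give $C^\infty$'' does not close: Evans--Krylov type $C^{2,\lambda}$ estimates presuppose a $C^{1,1}$ (i.e.\ full $|A|$) bound, and without it the regularity machinery cannot even start. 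The paper's entire Section 5 (Proposition \ref{C2estimate3}) is devoted to exactly this missing bound: it studies the auxiliary quantity $Q=\log|A|^2 - 2B\log(\Phi-a)$, following Li--Xu--Zhang, and at a maximum of $Q$ uses the concavity of $\Phi$ combined with the critical-point relation $\nabla|A|^2/|A|^2 = 2B\,\nabla\Phi/(\Phi-a)$ to produce a dominant negative term $-c_1|A|^2$. Your sketch has no substitute for this computation, and it is the heart of Theorem \ref{k-convex.converge}. The paper also notes that for $0<\beta\le 1$ the radial equation is in general not concave in the Hessian and appeals to Andrews' two-dimensional regularity theorem (\cite{Andrews-2004}) rather than Krylov; your invocation of Krylov--Safonov is at best in need of the separate concavity check for the $k=1$ structure.

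On the convergence side your route also diverges from the paper's, and in a direction that creates extra work. You propose to find a monotone hyperbolic quermassintegral-type functional and invoke a hyperbolic Alexandrov--Fenchel inequality; no such functional is exhibited for the speed $\phi^{\alpha/\beta}\sigma_1^{1/\beta}$, and the relevant inequalities in $\HH^{n+1}$ are delicate, so this is speculative. The paper's argument is both more elementary and quantitatively stronger: the very computation used for the gradient estimate, once combined with the positive lower bound on $\sigma_k$ from Proposition \ref{C2estimate1}, already yields $\partial_t|\bar\nabla\varphi|^2 \le -c|\bar\nabla\varphi|^2$ and hence exponential decay of the gradient; the $C^0$ limit $r\to\hat{r}$ then follows from a comparison with the explicit ODE barriers $a_i(t)$, and exponential $C^\infty$ convergence comes from interpolation with the uniform higher-order estimates. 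To repair your proposal, add the $|A|$ bound as a separate step and replace the monotone-functional argument with the direct decay estimate for $|\bar\nabla\varphi|^2$.
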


Moreover, when $ \MM_{0} $ is uniformly convex, we can prove the solution to \eqref{hflow} remains uniformly convex for all $ k=1,\cdots,n$.
\begin{thm}\label{u.convex.converge}
	Let $\MM_{0}$ be a smooth, closed, uniformly convex star-shaped hypersurface in $\mathbb{H}^{n+1}$ enclosing the origin. If $ 0<\beta\leq 1 $, $\alpha > \beta+k$, the flow \eqref{hflow} has a unique smooth and  uniformly convex solution $\MM_{t}$ for all time $t>0$. The flow converges exponentially to a sphere centered at the origin in the $C^{\infty}$ topology.
\end{thm}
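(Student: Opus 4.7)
The plan is to reduce \eqref{hflow} to a scalar parabolic equation for the radial graph function $r(\theta,t)$ on $\SSS^{n}$, run the standard $C^0\!\to\!C^1\!\to\!C^2\!\to\!C^\infty$ a priori estimates, and close with a monotonicity argument for convergence. Writing $\MM_t$ as a radial graph $X(\theta,t)=\exp_{o}(r(\theta,t)\theta)$, the conformal Killing term $\gamma V$ matches exactly the tangential reparametrization, and a standard computation gives
\[
\partial_t r \;=\; -\,v\,\phi(r)^{\alpha/\beta}\,\sigma_k^{1/\beta}(\kappa[r]) \;+\; \gamma\,\phi(r),\qquad v=\sqrt{1+|\nabla r|^2/\phi(r)^2},
\]
where $\nabla$ denotes the round-sphere connection. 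Since uniform convexity is an open condition and $\sigma_k^{1/\beta}$ is strictly elliptic there, short-time existence on a maximal interval $[0,T_{\max})$ is classical.

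For the $C^0$ bound, at a spatial maximum of $r$ one has $\nabla r=0$, $v=1$, and $\MM_t$ touches a geodesic sphere of radius $r_{\max}$ from inside, so $\kappa_i\geq\coth r_{\max}$ and $\sigma_k^{1/\beta}\geq\gamma^{1/\beta}\coth^{k/\beta} r_{\max}$; substituting,
\[
\dot r_{\max}\;\leq\; \gamma\sinh r_{\max}\,\Big(1 \;-\; \gamma^{(1-\beta)/\beta}\,\sinh^{(\alpha-\beta-k)/\beta}r_{\max}\,\cosh^{k/\beta} r_{\max}\Big),
\]
and a symmetric inequality holds at a spatial minimum. The hypothesis $\alpha>\beta+k$ is precisely what forces the bracket negative at large $r_{\max}$ and positive at small $r_{\min}$, giving uniform bounds $r\in[R_0,R_1]\subset(0,\infty)$. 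For the $C^1$ estimate I would apply the maximum principle to $v$ (equivalently, to the support function $u=\phi/v$), where the same sign $\alpha-\beta-k>0$ supplies the favourable zero-order term that prevents $v$ from blowing up.

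The main obstacle is the $C^2$ step: with $0<\beta\leq 1\leq k$, the speed $\sigma_k^{1/\beta}$ has homogeneity $k/\beta\geq 1$ and, when $\beta<k$, \emph{fails} to be concave in the Weingarten operator, so the usual concavity-based maximum principle does not apply to $\sigma_k^{1/\beta}$ directly. To preserve uniform convexity I plan a maximum-principle argument on the smallest principal curvature $\kappa_{\min}$ using an Andrews-type auxiliary quantity of schematic form $\log\kappa_{\min}-A\log u-B\,\phi(r)$, exploiting the (weaker but true) concavity of $\sigma_k^{1/k}$ together with the favourable sign of the Killing-induced zero-order term---available precisely because $\alpha>\beta+k$---to absorb the dangerous cubic terms. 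An upper bound on $\kappa_{\max}$, equivalently on $F=\phi^{\alpha/\beta}\sigma_k^{1/\beta}$, then follows from a parallel test function of the form $\log F-A\log u$ with $A$ sufficiently large relative to the already-controlled $C^0$ and $C^1$ quantities.

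With two-sided curvature bounds in hand the equation is uniformly parabolic with smooth coefficients. Although $\sigma_k^{1/\beta}$ is not concave in general, the auxiliary function $F^{\beta/k}=\phi^{\alpha/k}\sigma_k^{1/k}$ is, so Evans--Krylov applied to $F^{\beta/k}$ delivers $C^{2,\alpha}$ and Schauder bootstrap yields $C^\infty$ bounds uniform in time, hence $T_{\max}=\infty$. For convergence I would sharpen the $C^0$ ODE of the second paragraph: $r_{\max}(t)$ and $r_{\min}(t)$ are driven monotonically toward the common equilibrium radius determined by $\sinh^{\alpha-\beta} r_\infty\coth^k r_\infty=\gamma^{\beta-1}$, and the linearized rate is strictly negative with magnitude controlled by $\alpha-\beta-k$, yielding exponential contraction of $r_{\max}-r_{\min}$. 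Combined with the uniform higher regularity, interpolation upgrades this to smooth exponential convergence of $\MM_t$ to a geodesic sphere centered at the origin.
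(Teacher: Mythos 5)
Your overall roadmap (radial-graph reduction, $C^0\to C^1\to C^2\to C^\infty$, then ODE comparison for convergence) is the same as the paper's, and the $C^0$, $C^1$ and Tso-type upper curvature bound are essentially identical. There is one genuine technical gap in the key step, the lower bound on the principal curvatures.

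For preserving uniform convexity you propose a maximum-principle argument on $\kappa_{\min}$ "exploiting the (weaker but true) concavity of $\sigma_k^{1/k}$". Concavity of $\sigma_k^{1/k}$ is not the right ingredient here. When one evolves the largest principal radius $\tilde h_1^1$, the second-fundamental-form derivatives enter through the combination
\[
\ddot F^{pq,rs}h_{rs,1}h_{pq,1}+2\dot F^{ij}\tilde h^{pq}\nabla_i h_{1p}\nabla_j h_{1q},
\]
and one needs a \emph{lower} bound for this expression. Mere concavity of $F$ gives $\ddot F^{pq,rs}\eta_{pq}\eta_{rs}\le 0$, which has the wrong sign and does not control the second term. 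What is actually needed is the \emph{inverse-concavity} inequality (Urbas; Lemma~\ref{inverse.concave} in the paper): with $G=\sigma_k^{1/k}$,
\[
\bigl(G^{pq,lm}+2G^{pm}\tilde h_q^{\,l}\bigr)\eta_p^{\,q}\eta_l^{\,m}\ge 2G^{-1}\bigl(G^{pq}\eta_p^{\,q}\bigr)^2\ge 0.
\]
Fortunately $\sigma_k^{1/k}$ is inverse-concave on the positive cone, so the argument can be repaired by naming the correct property. Once that is done, the paper shows one may work with $\tilde h_1^1$ directly with no auxiliary multipliers: combining the inverse-concavity inequality with the explicit zero-order computation yields a favourable square plus the sign $-\tfrac{\alpha-k-\beta}{k+\beta}<0$ on the $(\tilde h_1^1)^2(\nabla_1 r)^2$ term and, after using $|\nabla_1 r|\le 1$ and the $C^0$/$C^1$ bounds, the clean inequality $\mathcal L\tilde h_1^1\le -\delta(\tilde h_1^1)^2+C\tilde h_1^1+C$. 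Your proposed auxiliary $\log\kappa_{\min}-A\log u-B\phi(r)$ is therefore unnecessary overhead, though not incorrect in spirit. The remaining differences are minor: for $C^{2,\lambda}$ you propose a rewrite so that $\sigma_k^{1/k}$ (concave in $D^2 r$) appears and Evans--Krylov applies; the paper instead cites Andrews' $C^{2,\lambda}$ result for non-concave equations, noting that Krylov does not apply directly. And for exponential convergence the paper derives it from the decay $\partial_t|\bar\nabla\varphi|^2\le -c|\bar\nabla\varphi|^2$ (using the positive lower bound on $\sigma_k$) rather than from linearizing the radial ODE, though both routes give the conclusion.
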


In order to prove our main results in Theorem \ref{k-convex.converge} and Theorem \ref{u.convex.converge}, we shall establish the a priori estimates for the flow \eqref{hflow}, and show that if $X(\cdot, t)$ solves \eqref{hflow}, then the radial function $r$ converges exponentially to a constant as $t \rightarrow \infty$.

\subsection{Organization of the paper}
This paper is organized as follows. In section \ref{sec:2} we collect some properties of star-shaped hypersurfaces, and show that the flow \eqref{hflow} can be reduced to a parabolic equation of the radial function. We will also derive the evolution equations for various geometric quantities in section \ref{sec:2}. In section \ref{sec:3} we establish the needed a priori estimates of the radial function $ r $ and its gradient $ \bar{\nabla}r $ of the solution $ \MM_{t} $. In section \ref{sec:4} we establish the needed a priori estimates of the $ k $-curvature $ \sigma_{k} $ of the solution $ \MM_{t} $. In section \ref{sec:5} and section \ref{sec:6} we respectively prove the needed a priori estimates of the principal curvatures for mean convex and uniformly convex solutions. In section \ref{sec:7} we prove the main results of asymptotic convergence.

\begin{ack}
Special thanks to the author's supervisor Prof. Yong Wei. He suggested the author to study the flow in hyperbolic space and helped a lot in improving the rigorousness of the paper. The research was supported by ``Undergraduate Research Program'' and the Research grant KY0010000052 from University of Science and Technology of China.
\end{ack}

\section{Preliminaries}\label{sec:2}
In this paper we view $ \HH^{n+1} $ as the warped product manifold with a coordinate chart homeomorphic to $\RR^{n+1} $ with Riemannian metric $ g^{\mathbb{H}} = dr\otimes dr + (\phi(r))^2 \bar{g} $, where $ \bar{g} $ is the standard metric on $ \mathbb{S}^{n} $ and $\phi(r)=\sinh r$.
\subsection{The vector field V}
We denote the covariant derivative with respect to $ g^{\mathbb{H}} $ in  $ \HH^{n+1} $ by $ D $. We have the following basic properties for the vector field $V=\phi(r)\partial_r$.
\begin{lem}[see \cite{Guan-2015}]\label{propertyV}
	For any tangent vector field $ X $ on $ \HH^{n+1} $, we have
	$$ D_XV = \phi'(r)X. $$
\end{lem}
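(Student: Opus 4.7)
The plan is to exploit the warped-product structure of $g^{\mathbb{H}} = dr\otimes dr + \phi(r)^2 \bar g$ together with the observation that $V = \phi(r)\partial_r$ is itself a gradient vector field. Indeed, since $\phi(r) = \sinh r = (\cosh r)'$, setting $\Phi(r) := \cosh r$ gives $d\Phi = \phi(r)\,dr$, and raising the index with $g^{\mathbb{H}}$ (using $g^{rr} = 1$ and the fact that $\Phi$ is independent of the spherical coordinates) exhibits $V = \phi(r)\partial_r$ as the $g^{\mathbb{H}}$-gradient of $\Phi$. With $V$ so realized, the identity $D_X V = \phi'(r) X$ is equivalent to the Hessian identity $\mathrm{Hess}\,\Phi = \phi'(r)\, g^{\mathbb{H}}$, and it suffices to verify this pointwise on every pair of vectors.

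To do so, I would work in a local orthonormal frame adapted to the warped product: $e_0 = \partial_r$ together with $e_1,\ldots,e_n$ tangent to the slice $\{r = \mathrm{const}\}$ and orthonormal with respect to $g^{\mathbb{H}}|_{\mathrm{slice}} = \phi(r)^2 \bar g$. The standard warped-product connection formulas, which follow by a direct application of Koszul's identity to $g^{\mathbb{H}}$, give $D_{\partial_r}\partial_r = 0$, $D_{\partial_r} e_i = D_{e_i}\partial_r = (\phi'/\phi)\,e_i$, and $\langle D_{e_i} e_j,\partial_r\rangle = -(\phi'/\phi)\,\delta_{ij}$. Plugging these into the formula $(\mathrm{Hess}\,\Phi)(Y,Z) = Y(Z\Phi) - (D_Y Z)\Phi$ and using $\Phi'(r) = \phi(r)$, $\Phi''(r) = \phi'(r)$ produces $\phi'(r)$ on the diagonal and $0$ off-diagonal, i.e.\ $\mathrm{Hess}\,\Phi = \phi'(r)\,g^{\mathbb{H}}$.

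An equivalent, more elementary route that avoids the gradient reformulation is to compute $D_X V$ directly. Split $X = a\partial_r + Y$ with $Y$ tangent to the level slice, and apply the Leibniz rule:
\[
D_X V \;=\; X(\phi(r))\,\partial_r + \phi(r)\,D_X \partial_r \;=\; a\phi'(r)\,\partial_r + \phi(r)\cdot\frac{\phi'(r)}{\phi(r)}\,Y \;=\; \phi'(r)\,X,
\]
using only the single warped-product identity $D_X \partial_r = (\phi'/\phi)\,Y$ on the tangential part (together with $D_{\partial_r}\partial_r = 0$). The main thing to be careful about in either route is the correct bookkeeping of the warped-product Christoffel symbols; beyond that no real obstacle arises. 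The statement is in essence a restatement of the fact that $V$ is a conformal gradient field on $\mathbb{H}^{n+1}$ with conformal factor $\phi'(r) = \cosh r$, which is precisely why it is cited from \cite{Guan-2015} rather than re-derived.
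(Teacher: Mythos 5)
Your proof is correct, and both routes go through. Since the paper itself only cites \cite{Guan-2015} for this lemma rather than giving a proof, there is nothing in the text to compare against; your derivation fills that gap. Of your two routes, the second (Leibniz-rule) computation is the most economical: writing $X = a\,\partial_r + Y$ with $Y$ tangent to the slice, the only facts needed are $D_{\partial_r}\partial_r = 0$ and $D_Y\partial_r = (\phi'/\phi)\,Y$, both of which drop out of a one-line Koszul computation (or equivalently $\Gamma^j_{ri} = (\phi'/\phi)\,\delta^j_i$, $\Gamma^r_{rr}=\Gamma^j_{rr}=0$ for the warped metric $dr\otimes dr + \phi^2\bar g$), and then
\[
D_XV = X(\phi)\,\partial_r + \phi\,D_X\partial_r = a\phi'\,\partial_r + \phi'\,Y = \phi'X.
\]
Your first route, via $V=\nabla\cosh r$ and $\mathrm{Hess}(\cosh r)=\phi'\,g^{\mathbb H}$, is equally valid and has the conceptual advantage of explaining \emph{why} the identity is tensorial (it is the Hessian of a radial primitive), but it requires the extra Christoffel datum $\langle D_{e_i}e_j,\partial_r\rangle = -(\phi'/\phi)\,\delta_{ij}$, which the direct route never uses. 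Both are standard warped-product arguments and either would serve as a complete proof in place of the citation.
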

\begin{cor}\label{s2.cor1}
	For any tangent vector field $ X $ on $ \HH^{n+1} $, we have
	$$D_{X}r = \frac{\left\langle X,V \right\rangle}{\phi(r)}. $$
	where $ \left\langle\cdot,\cdot\right\rangle $ means the inner product of tangent vectors induced by $ g^{\HH} $.
\end{cor}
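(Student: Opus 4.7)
The plan is to deduce the identity directly from Lemma \ref{propertyV} by differentiating the squared norm $|V|^2$. Since $V=\phi(r)\partial_r$ and the warped-product metric gives $\langle \partial_r,\partial_r\rangle_{g^{\mathbb{H}}}=1$, we have $|V|^2=\phi(r)^2$. Applying the derivation $D_X$ to this pointwise identity and using the chain rule on the right yields $2\phi(r)\phi'(r)\,D_X r$, while on the left the product rule (compatibility of $D$ with the metric) together with Lemma \ref{propertyV} gives $2\langle D_X V, V\rangle = 2\phi'(r)\langle X,V\rangle$. Equating the two expressions and dividing by $2\phi'(r)$ (which is legitimate since $\phi'(r)=\cosh r>0$) gives the claim.

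Alternatively, one can argue even more directly from the warped-product structure without invoking Lemma \ref{propertyV}: writing $X=X^r\partial_r+X^{\mathrm{sph}}$ in the split $T\mathbb{H}^{n+1}=\mathrm{span}(\partial_r)\oplus T\mathbb{S}^n$, the definition of $D_X r$ as the directional derivative $X(r)$ equals $X^r$; on the other hand $\langle X,V\rangle=\phi(r)\langle X,\partial_r\rangle=\phi(r)X^r$, so dividing by $\phi(r)$ gives the corollary. Either route is essentially a one-line calculation.

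There is no real obstacle here; the only point worth flagging is making sure the warped-product conventions are used consistently, so that $\partial_r$ is unit and $\mathbb{S}^n$-directions are orthogonal to it. I would present the first proof (via $|V|^2$) since it keeps the statement as a genuine corollary of Lemma \ref{propertyV} and avoids reopening the local coordinate computation.
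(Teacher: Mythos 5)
Your first argument is exactly the paper's proof: differentiate $\left\langle V,V\right\rangle=\phi(r)^2$, apply Lemma \ref{propertyV} and metric compatibility to the left side, and solve for $D_X r$. It is correct (the division is by $2\phi(r)\phi'(r)$, but that is clear from context), and since you say you would present this version, the proposal takes essentially the same route as the paper.
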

\begin{proof}
	Since $ V = \phi(r)\p_r $, $ \left\langle V,V\right\rangle = (\phi(r))^2 $. 	Thus
	$$
	D_{X}\left\langle V,V\right\rangle = D_{X}\left((\phi(r))^2\right) =
	2\phi(r)\phi'(r)D_{X}r.
	$$
	Meanwhile, by \lemref{propertyV}, for the left hand side we have
	$$
	D_{X}\left\langle V,V\right\rangle =
	2\left\langle D_{X} V,V\right\rangle =
	2\left\langle \phi'(r)X,V\right\rangle =
	2\phi'(r)\left\langle X,V\right\rangle.
	$$
	Hence
	$ D_{X}r = \frac{\left\langle X,V \right\rangle}{\phi(r)} $.
\end{proof}

\subsection{Elementary symmetric polynomials}\hfil\\
For $ n $ variables $ x_1,\cdots,x_n $, their $ k $ th elementary symmetric polynomial is defined by
$$
\sigma_{k}(x_1,\cdots,x_n) = \sum_{1\leq i_1<\cdots<i_k \leq n}x_{i_1}\cdots x_{i_n},
$$
%and the sums of $k$ th powers is defined by
%$$
%S_{k}\left(x_{1}, \ldots, x_{n}\right)=\sum_{j=1}^{n} (x_{j})^{k} .
%$$
%\begin{comment}
%\begin{lem}
%	$ F:=\left(\sigma_{k}\right)^{\frac{1}{k}} $ is a homogeneous concave function with degree 1 on $ \Gamma_{m}^{+} $, and is inverse-concave on $ \Gamma^{+} $.
%\end{lem}
%\end{comment}
%
%\begin{lem}[Newton's identities]\label{Newton}
%	The identities between the elementary symmetric polynomials $\sigma_{k}\left(x_{1}, \ldots, x_{n}\right)$ and the sums of powers $ S_{k}\left(x_{1}, \ldots, x_{n}\right) $
%	are given by
%	\begin{equation}
%		m \sigma_{m}\left(x_{1}, \ldots, x_{n}\right)+\sum_{k=1}^{m}(-1)^{k} S_{k}\left(x_{1}, \ldots, x_{n}\right) \sigma_{m-k}\left(x_{1}, \ldots, x_{n}\right)=0.
%	\end{equation}
%	That is,
%	\begin{equation}
%		m \sigma_{m}+\sum_{k=1}^{m}(-1)^{k} S_{k} \sigma_{m-k}=0.
%	\end{equation}
%\end{lem}

\begin{lem}[Newton-Maclaurin's inequalities]\label{Maclaurin}
	When $ (x_1,\cdots,x_n) \in \Gamma_{m-1}^{+} $, for $ l, m $ such that $ 1\leq l < m \leq n $, we have
	\begin{equation}
		\sigma_m (x_1,\cdots,x_n)\leq \tbinom{n}{m}\left(\frac{\sigma_l (x_1,\cdots,x_n)}{\tbinom{n}{l}}\right)^{{m}/{l}}.
	\end{equation}
Equality holds if and only if $(x_1,\cdots,x_n)=c(1,\cdots,1)$ for some constant $c>0$.
\end{lem}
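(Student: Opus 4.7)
The plan is to derive this chain of comparisons from Newton's inequality combined with a log-concavity argument on the normalized symmetric polynomials.

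\textbf{Step 1 (Setup and trivial cases).} I would set $p_j := \sigma_j/\binom{n}{j}$, so the claim reduces to $p_m \leq p_l^{m/l}$. If $\sigma_m \leq 0$, then $p_m \leq 0$ while $p_l > 0$ gives $p_l^{m/l} > 0$, so the inequality is trivial. We may therefore assume $\sigma_m > 0$, which together with the hypothesis gives $(x_1,\ldots,x_n) \in \Gamma_m^+$, so that $p_0,p_1,\ldots,p_m$ are all strictly positive.

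\textbf{Step 2 (Newton's inequality).} The central ingredient I would then establish is the classical Newton inequality: for any real $x_1,\ldots,x_n$ and $1 \leq k \leq n-1$,
\[
p_k^2 \;\geq\; p_{k-1}\, p_{k+1},
\]
with equality iff $x_1 = \cdots = x_n$. This exploits the hyperbolicity of $P(t) := \prod_{i=1}^n (t+x_i)$: all its roots are real, and by Rolle's theorem (with multiplicities) the same holds for every derivative $P^{(j)}$ and for every reversal $t^{\deg R} R(1/t)$ of a real-rooted polynomial $R$. By a suitable alternation of differentiation and reversal, one can reduce to a quadratic $At^2 + 2Bt + C$ whose discriminant $B^2 - AC \geq 0$ unwinds precisely to $p_k^2 \geq p_{k-1} p_{k+1}$. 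Equality forces this quadratic to be a perfect square, which when tracked back through the reductions forces $x_1 = \cdots = x_n$.

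\textbf{Step 3 (Chain to Maclaurin).} Given Step 2 and the positivity from Step 1, define $a_k := \log p_k - \log p_{k-1}$ for $1 \leq k \leq m$. Newton's inequality rewrites as $a_{k+1} \leq a_k$, i.e., $(a_k)$ is non-increasing. Since the average of a non-increasing sequence over its first $l$ terms is at least the average over its first $m \geq l$ terms,
\[
\frac{1}{l}\log p_l \;=\; \frac{1}{l}\sum_{k=1}^{l} a_k \;\geq\; \frac{1}{m}\sum_{k=1}^{m} a_k \;=\; \frac{1}{m}\log p_m,
\]
and exponentiating yields $p_l^{1/l} \geq p_m^{1/m}$, which is the claim. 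Equality throughout forces $a_1 = \cdots = a_m$, hence equality in every intermediate Newton step, hence $x_1 = \cdots = x_n$.

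\textbf{Main obstacle.} The substantive content lies in Step 2: rigorously justifying Newton's inequality requires carefully leveraging the all-real-roots property of $P(t)$ under both differentiation and reversal, handling multiplicities in Rolle's theorem, and, in degenerate cases where some $x_i$ vanish, passing through a small perturbation argument. Once Newton's inequality is in hand, Steps 1 and 3 are purely formal.
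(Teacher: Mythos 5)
The paper states this lemma as a classical fact and cites it without proof, so there is no ``paper's own proof'' to compare against; I will assess your argument on its own terms. Your approach is the standard one --- reduce to Newton's inequality $p_k^2 \geq p_{k-1}p_{k+1}$ for the normalized polynomials $p_k = \sigma_k/\binom{n}{k}$, and chain it via log-concavity (equivalently, the observation that the averages $\frac{1}{j}\sum_{k=1}^j a_k$ of a non-increasing sequence are themselves non-increasing) --- and Steps~1 and~3 are carried out correctly, including the reduction to $\Gamma_m^+$ in Step~1 that makes the logarithms in Step~3 well-defined.

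One imprecision worth flagging is in your statement of Newton's inequality in Step~2: the equality case for general real $x_1,\ldots,x_n$ is \emph{not} simply $x_1=\cdots=x_n$. For instance, in $n=3$ with $x=(1,0,0)$ and $k=2$ one has $p_2=p_3=0$, hence $p_2^2 = p_1 p_3 = 0$, yet the $x_i$ are not all equal. The correct general statement is that equality holds iff either all $x_i$ are equal or enough of them vanish to make both sides zero. Your argument survives this because in Step~1 you have arranged $p_0,\ldots,p_m$ to be strictly positive, and the degenerate equality branch is excluded precisely when the three consecutive $p$'s involved are nonzero. But as written, the sentence ``with equality iff $x_1=\cdots=x_n$'' overstates the lemma you would actually prove in Step~2, and when you invoke it in the equality analysis of Step~3 you should explicitly note that you are using the positivity of $p_{k-1},p_k,p_{k+1}$ to rule out the degenerate branch. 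Apart from that, the structure is sound; the Rolle/reversal argument plus perturbation that you sketch for Step~2 is the usual route and, if spelled out, would complete a self-contained proof of the lemma.
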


\begin{lem}[see \cite{Guan-2014}]\label{dsigma}
	For a real symmetric matrix $ A=(a_{ij})_{1\leq i,j\leq n} $, we write $\sigma_{k}(A) = \sigma_{k}\left(\lambda_1,\ldots,\lambda_n\right)$, where $ \left(\lambda_1,\ldots,\lambda_n\right) $ are the eigenvalues of A. We write
	$ \dot{\sigma_{k}}^{ij} = \frac{\p \sigma_{k}}{\p a_{ij}} $.
	%(1) For all $ k=1,\cdots, n $,
	%$$ D\sigma_{k} := \left(\dot{\sigma_{k}}^{ij}\right)_{1\leq i,j\leq n} = \sum_{j=0}^{k-1} (-1)^j \sigma_{k-1-j} A^j. $$
 When the eigenvalues of A belong to $\Gamma_k^+$,  $ k=1,\cdots, n $ , we have that $ (\dot{\sigma_{k}}^{ij} ) $ is positive definite.
\end{lem}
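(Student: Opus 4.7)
The plan is to reduce the question to the diagonal case via orthogonal invariance, then compute the derivative matrix there explicitly, and finally invoke a classical positivity property of the cone $\Gamma_k^+$.

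First, I would exploit the fact that $\sigma_k(A)$ depends only on the eigenvalues of $A$ and is therefore invariant under conjugation by any orthogonal matrix: $\sigma_k(O^T A O)=\sigma_k(A)$. Differentiating this identity in $A$ shows that the matrix $\bigl(\dot\sigma_k^{ij}(A)\bigr)$ transforms under the same conjugation, so its positive definiteness at $A$ is equivalent to positive definiteness at any orthogonal conjugate of $A$. I may therefore assume without loss of generality that $A=\mathrm{diag}(\lambda_1,\ldots,\lambda_n)$ with $(\lambda_1,\ldots,\lambda_n)\in \Gamma_k^+$.

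Next, starting from the principal-minor expansion $\sigma_k(A)=\sum_{|I|=k}\det(A_I)$, where $A_I$ denotes the $k\times k$ principal submatrix of $A$ indexed by $I\subset\{1,\ldots,n\}$, a direct computation at such a diagonal $A$ yields the standard formula
\[
\dot\sigma_k^{ii}(A)=\sigma_{k-1}(\lambda|i),\qquad \dot\sigma_k^{ij}(A)=0\ \text{ for }\ i\neq j,
\]
where $\sigma_{k-1}(\lambda|i)$ is the $(k-1)$-th elementary symmetric polynomial in the $n-1$ variables $\{\lambda_j\}_{j\neq i}$. Positive definiteness of the diagonal matrix $(\dot\sigma_k^{ij})$ thus reduces to showing $\sigma_{k-1}(\lambda|i)>0$ for every $i$.

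The hard part will be this last strict positivity on $\Gamma_k^+$. I would proceed by induction on $k$: the base case $k=1$ is trivial since $\sigma_0(\lambda|i)\equiv 1$; for the inductive step one uses the identity $\sigma_k(\lambda)=\lambda_i\sigma_{k-1}(\lambda|i)+\sigma_k(\lambda|i)$ together with the inclusion $\Gamma_k^+\subset \Gamma_{k-1}^+$ and the Newton-Maclaurin inequalities from \lemref{Maclaurin}, propagating positivity from $\sigma_1,\ldots,\sigma_k$ to each of the one-variable-removed quantities $\sigma_{k-1}(\lambda|i)$. This is a classical consequence of G\aa rding's theory of hyperbolic polynomials, so one may alternatively cite \cite{Guan-2014} for this step.
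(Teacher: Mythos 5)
Your reduction to a diagonal matrix via orthogonal invariance, followed by the computation $\dot\sigma_k^{ii}(A)=\sigma_{k-1}(\lambda|i)$ with vanishing off-diagonal entries, is correct and is the standard route. For context, the paper gives no proof of this lemma at all --- it simply points to \cite{Guan-2014} --- so any argument you write out goes beyond what the paper records; both you and the paper ultimately defer to the same classical fact, the strict positivity of $\sigma_{k-1}(\lambda|i)$ on $\Gamma_k^+$.

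However, the inductive argument you sketch for that positivity does not go through as written. The base case $k=1$ is fine, but in the inductive step the identity $\sigma_k(\lambda)=\lambda_i\sigma_{k-1}(\lambda|i)+\sigma_k(\lambda|i)$ solved for $\sigma_{k-1}(\lambda|i)$ requires control on the sign of $\lambda_i$ and of $\sigma_k(\lambda|i)$, and neither is furnished by the inclusion $\Gamma_k^+\subset\Gamma_{k-1}^+$ or by \lemref{Maclaurin}, which relate the untruncated quantities $\sigma_j(\lambda)$ to one another rather than giving information about $\sigma_j(\lambda|i)$. Note in particular that for $k<n$ a point of $\Gamma_k^+$ can have negative entries (e.g.\ $(1,\ldots,1,-\epsilon)$ for small $\epsilon>0$), so one cannot divide by $\lambda_i$ or argue termwise. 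The usual complete proof establishes first that $\lambda\in\Gamma_k^+$ implies $(\lambda|i)\in\bar\Gamma_{k-1}^+$, and then upgrades $\sigma_{k-1}(\lambda|i)\geq 0$ to strict positivity via the convexity and connectedness of $\Gamma_k^+$ coming from G\aa rding's hyperbolicity theory; this is genuinely more delicate than the sketch suggests. Since you explicitly offer to cite \cite{Guan-2014} for this step --- which is precisely what the paper does --- the proposal is acceptable, but the induction you outline should not be presented as though it were a proof.
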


In the most case of the following paper, by using $ \sigma_{k} $, we mean the $ k $ th elementary symmetric polynomial of the principal curvatures, that is, locally, eigenvalues of the matrix $ \left(h_{i}^{j}\right) $.
%In such cases,
%$$ D\sigma_{k} = \left( \frac{\p \sigma_{k}}{\p h_{i}^{j}} \right)_{1\leq i,j \leq n} . $$
For consideration of the consistency of superscripts and subscripts, by using the notation $ \dot{\sigma_{k}}^{ij} $, we mean
$ \dot{\sigma_{k}}^{ij} = g^{jl}\frac{\p \sigma_{k}}{\p h_{i}^{l}} $, and similarly we can define second derivatives by $ \ddot{\sigma_{k}}^{pq,rs} = g^{ql}g^{sm}\frac{\p^2 \sigma_{k}}{\p h_{p}^{l}\p h_{r}^{m}} $.

For general functions on the eigenvalues of a real symmetric matrix, such as $ F=(\sigma_{k})^{\frac{1}{\beta}} $, we also write
$$ \dot{F}^{ij} = g^{jl}\frac{\p F}{\p h_{i}^{l}}, \text{ and } \ddot{F}^{pq,rs} = g^{ql}g^{sm}\frac{\p^2 F}{\p h_{p}^{l}\p h_{r}^{m}}. $$

We recall the following result:
\begin{lem}[see \cite{Guan-2014}]\label{trace}
	\begin{align*}
	\dot{\sigma_{k}}^{ij}(h^2)_{ij} = & \sigma_1\sigma_{k}-(k+1)\sigma_{k+1},\\
	\dot{\sigma_{k}}^{ij}h_{ij} = & k\sigma_{k},\\
	\dot{\sigma_{k}}^{ij}g_{ij} = &(n-k+1)\sigma_{k-1},
\end{align*}
where $ (h^2)_{ij} = h^{ik}h_{kj} $.
\end{lem}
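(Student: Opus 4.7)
The plan is to prove all three identities by reducing to the diagonal case and using elementary combinatorial identities for $\sigma_k$. Since $\dot{\sigma_k}^{ij}(h^2)_{ij}$, $\dot{\sigma_k}^{ij}h_{ij}$ and $\dot{\sigma_k}^{ij}g_{ij}$ are all scalar quantities built invariantly from the Weingarten tensor $h_i^{\,j}$ and the metric, it suffices to compute them at an arbitrary point in a frame that diagonalizes $h_i^{\,j}$. In such an orthonormal frame we have $h_i^{\,j}=\kappa_i\delta_i^{\,j}$ and, as is classical and follows immediately from the definition of $\sigma_k$ as a polynomial in the eigenvalues, $\dot{\sigma_k}^{ij}$ is diagonal with entries $\dot{\sigma_k}^{ii}=\sigma_{k-1}(\kappa\,|\,i)$, where $\sigma_{k-1}(\kappa\,|\,i)$ denotes the $(k-1)$-st elementary symmetric polynomial in the $n-1$ variables $\{\kappa_j\}_{j\neq i}$.

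With this reduction, the three identities become
\begin{align*}
\sum_i \kappa_i^2\,\sigma_{k-1}(\kappa\,|\,i) &= \sigma_1\sigma_k-(k+1)\sigma_{k+1}, \\
\sum_i \kappa_i\,\sigma_{k-1}(\kappa\,|\,i) &= k\,\sigma_k, \\
\sum_i \sigma_{k-1}(\kappa\,|\,i) &= (n-k+1)\sigma_{k-1}.
\end{align*}
The third is a direct counting argument: a fixed $(k-1)$-subset $I\subset\{1,\dots,n\}$ contributes to $\sigma_{k-1}(\kappa\,|\,i)$ precisely when $i\notin I$, i.e.\ for $n-(k-1)=n-k+1$ values of $i$. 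The second uses the same counting on $k$-subsets: $\kappa_i\sigma_{k-1}(\kappa\,|\,i)$ is a sum of monomials $\kappa_{j_1}\cdots\kappa_{j_k}$ with $i\in\{j_1,\dots,j_k\}$, and each such monomial is generated $k$ times as $i$ ranges over its indices.

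The first identity is the only one requiring real work. I would use the Pascal-type recursion $\sigma_k(\kappa)=\sigma_k(\kappa\,|\,i)+\kappa_i\sigma_{k-1}(\kappa\,|\,i)$, which gives $\kappa_i\sigma_{k-1}(\kappa\,|\,i)=\sigma_k-\sigma_k(\kappa\,|\,i)$. Multiplying by $\kappa_i$ and summing over $i$ yields
\[
\sum_i\kappa_i^2\,\sigma_{k-1}(\kappa\,|\,i)=\sigma_1\sigma_k-\sum_i\kappa_i\,\sigma_k(\kappa\,|\,i).
\]
A counting argument identical in spirit to the one above shows that $\sum_i\kappa_i\sigma_k(\kappa\,|\,i)=(k+1)\sigma_{k+1}$: every monomial $\kappa_{j_1}\cdots\kappa_{j_{k+1}}$ is counted once for each of the $k+1$ indices $i\in\{j_1,\dots,j_{k+1}\}$ picked out by the factor $\kappa_i$ in $\kappa_i\sigma_k(\kappa\,|\,i)$. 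Substituting this identity concludes the proof.

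No step is really an obstacle; the only mild subtlety is bookkeeping on the super/subscript conventions for $\dot{\sigma_k}^{ij}=g^{jl}\partial\sigma_k/\partial h_i^{\,l}$, so I would phrase the diagonalization in an orthonormal frame where $g_{ij}=\delta_{ij}$ so that the raising of indices is invisible and one can identify $\dot{\sigma_k}^{ij}$ with the classical matrix of partials $\partial\sigma_k/\partial h_{ij}$. Once the computation is done at one point in such a frame, invariance immediately promotes the identities to tensorial equations valid on the whole hypersurface.
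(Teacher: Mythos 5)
Your proof is correct, and it is the standard argument: reduce to an orthonormal frame diagonalizing the Weingarten map so that $\dot{\sigma_k}^{ii}=\sigma_{k-1}(\kappa\,|\,i)$, and then verify the three scalar identities by counting how many times each monomial is generated, with the first identity handled via the recursion $\sigma_k=\sigma_k(\kappa\,|\,i)+\kappa_i\sigma_{k-1}(\kappa\,|\,i)$. The paper does not supply its own proof here but simply cites \cite{Guan-2014}; what you have written is essentially the argument one finds there, including the reduction to the diagonal case, so there is nothing to contrast. One small stylistic remark: the observation that the three contractions are scalar invariants of the pair $(h,g)$ is exactly the right justification for working in a pointwise adapted frame, and spelling it out as you did (rather than silently diagonalizing) is good practice given the index conventions $\dot{\sigma_k}^{ij}=g^{jl}\,\partial\sigma_k/\partial h_i^{\,l}$ used in the paper.
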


For $ F = (\sigma_{k})^{\frac{1}{\beta}} $, we have $\dot{F}^{ij}=\frac{1}{\beta}(\sigma_{k})^{\frac{1}{\beta}-1} \dot{\sigma_{k}}^{ij}$ and
\begin{cor}\label{F.trace}
		\begin{align*}
			\dot{F}^{ij}(h^2)_{ij} = & \frac{1}{\beta}(\sigma_{k})^{\frac{1}{\beta}-1} \left(\sigma_1\sigma_{k}-(k+1)\sigma_{k+1}\right),\\
			\dot{F}^{ij}h_{ij} = & \frac{k}{\beta}(\sigma_{k})^{\frac{1}{\beta}},\\
			\dot{F}^{ij}g_{ij} = & \frac{n-k+1}{\beta}(\sigma_{k})^{\frac{1}{\beta}-1}\sigma_{k-1}.
		\end{align*}
\end{cor}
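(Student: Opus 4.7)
The plan is to derive all three identities as immediate consequences of Lemma \ref{trace} together with the chain rule. Since $F = (\sigma_k)^{1/\beta}$ depends on the matrix $(h_i{}^j)$ only through $\sigma_k$, I would first differentiate the composition: with the convention $\dot{F}^{ij} = g^{jl}\frac{\partial F}{\partial h_i{}^l}$ stated just above the corollary, the chain rule gives
\[
\dot{F}^{ij} \;=\; \frac{1}{\beta}(\sigma_k)^{\frac{1}{\beta}-1}\, \dot{\sigma_k}^{ij},
\]
which is precisely the factorization already recorded in the line preceding the statement. This identity is the whole engine of the proof.

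Next I would simply contract this identity with each of the three tensors $(h^2)_{ij}$, $h_{ij}$, $g_{ij}$ in turn and quote the corresponding contraction from Lemma \ref{trace}. For the first, using $\dot{\sigma_k}^{ij}(h^2)_{ij} = \sigma_1\sigma_k - (k+1)\sigma_{k+1}$ immediately gives the first line. For the second, $\dot{\sigma_k}^{ij}h_{ij} = k\sigma_k$ combines with the prefactor $\frac{1}{\beta}(\sigma_k)^{\frac{1}{\beta}-1}$ to collapse into $\frac{k}{\beta}(\sigma_k)^{\frac{1}{\beta}}$, since one power of $\sigma_k$ from the trace eats the $-1$ in the exponent. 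For the third, $\dot{\sigma_k}^{ij}g_{ij} = (n-k+1)\sigma_{k-1}$ gives the remaining identity directly.

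There is essentially no obstacle here: the corollary is a purely algebraic consequence of the chain rule applied to a one-variable function $t\mapsto t^{1/\beta}$ composed with $\sigma_k$, combined with the three contraction identities that are already available. The only two points worth being careful about are (i) the bookkeeping of mixed indices in the convention $\dot{F}^{ij} = g^{jl}\partial F/\partial h_i{}^l$, so that the contractions with $h_{ij}$ and $g_{ij}$ are interpreted consistently, and (ii) the simplification of exponents in the second identity, where the cancellation of one power of $\sigma_k$ produces $(\sigma_k)^{1/\beta}$ rather than $(\sigma_k)^{1/\beta - 1}\sigma_k$.
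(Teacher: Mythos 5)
Your proposal matches the paper exactly: the paper records the chain-rule factorization $\dot{F}^{ij}=\frac{1}{\beta}(\sigma_{k})^{\frac{1}{\beta}-1}\dot{\sigma_{k}}^{ij}$ in the sentence immediately preceding the corollary, and the three identities are then obtained by contracting with $(h^2)_{ij}$, $h_{ij}$, $g_{ij}$ via Lemma \ref{trace}, just as you describe. The exponent simplification in the second line is correct.
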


\subsection{Radial function and radial graph}

Let $e_{1}, \ldots, e_{n}$ be a smooth local orthonormal frame field on $\mathbb{S}^{n}$, and let $\bar{\nabla}$ be the covariant derivative on $\mathbb{S}^{n}$. We denote by $g_{i j}, g^{i j}, \nu, h_{i j}$ the metric on $ \MM_{t} $ induced by the metric of $ \HH^{n+1} $, the inverse of the metric, the unit outer normal and the second fundamental form of $\mathcal{M}_t$, respectively. Then, in terms of $r$, we have
\begin{equation}
	\begin{aligned}\label{radial.quantities}
		u &=\frac{\phi^{2}}{\sqrt{\phi^{2}+|\bar{\nabla} r|^{2}}} \\
		g_{i j} &=\phi^{2} \delta_{i j}+r_{i} r_{j},
		\quad g^{i j}=\frac{1}{\phi^{2}}\left(\delta^{i j}-\frac{r_{i} r_{j}}{\phi^{2}+|\bar{\nabla} r|^{2}}\right) \\
		h_{i j} &=\left(\sqrt{\phi^{2}+|\bar{\nabla} r|^{2}}\right)^{-1}\left(-\phi \bar{\nabla}_{i}\bar{ \nabla}_{j} r+2 \phi^{\prime} r_{i} r_{j}+\phi^{2} \phi^{\prime}\delta_{i j}\right) \\
		h_{j}^{i} &=\frac{1}{\phi^{2} \sqrt{\phi^{2}+|\bar{\nabla} r|^{2}}}\left(\delta^{i k}-\frac{r_{i} r_{k}}{\phi^{2}+|\bar{\nabla} r|^{2}}\right)\left(-\phi \bar{\nabla}_{k} \bar{\nabla}_{j} r+2 \phi^{\prime} r_{k} r_{j}+\phi^{2} \phi^{\prime} \delta_{k j}\right),
	\end{aligned}
\end{equation}
where we denote $ r_i=\bar{\nabla}_i(r) $. These formulae can be found in a number of papers, see, for example \cite{Guan-2015}.

It will be convenient if we introduce a new variable $\varphi = \varphi(r)$ satisfying
\begin{equation}\label{varphi.def}
	\frac{\mathrm{d} \varphi}{\mathrm{d} r}=\frac{1}{\phi(r)}.
\end{equation}
\\
Let $\omega:=\sqrt{1+|\bar{\nabla} \varphi|^{2}}$, one can calculate the unit outward normal $$\nu=\frac{1}{\omega}\left(1,-\frac{r_{1}}{\phi^{2}}, \ldots,-\frac{r_{n}}{\phi^{2}}\right)$$ and the general support function $u=\langle V, \nu\rangle=\frac{\phi}{\omega} $. Then we can conclude how the radial function evolves along the flow.

\begin{lem}
The star-shaped solution of the flow
$$ \frac{\partial}{\partial t} X(x, t) =-{\left({\phi(r)}\right)}^{\frac{\alpha}{\beta}} \sigma_{k}^{\frac{1}{\beta}}(\kappa) \nu(x, t)+\gamma V(x,t)
$$
is equivalent to a solution to the scalar parabolic PDE of the radial function $r$:
\begin{equation}\label{hradial.eq}
	\frac{\partial}{\partial t} r =-{\left({\phi(r)}\right)}^{\frac{\alpha}{\beta}} \sigma_{k}(\kappa)^{\frac{1}{\beta}} \omega+\gamma \phi.
\end{equation}
\end{lem}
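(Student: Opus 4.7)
The plan is to use the fact that the geometric evolution of $\MM_t$ depends only on its normal velocity, since tangent-to-$\MM_t$ components of the velocity merely reparametrize $\SSS^n$ without altering $\MM_t$ as a subset of $\HH^{n+1}$. Accordingly, I would work with the radial-graph parametrization of $\MM_t$ over $\SSS^n$: write $X(x,t)$ as the point at hyperbolic distance $r(x,t)$ from the origin in the direction $x\in\SSS^n$. Since the angular coordinate $x$ is held fixed, only the radial component of $X$ evolves in time, and hence $\p_t X = \p_t r\cdot \p_r$.

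The proof is then completed by computing the normal velocity $\langle \p_t X,\nu\rangle$ in two ways and equating them. From the flow equation itself, using $\langle V,\nu\rangle = u = \phi/\omega$,
\begin{equation*}
\langle \p_t X,\nu\rangle \;=\; -\phi^{\alpha/\beta}\sigma_k^{1/\beta} + \gamma u \;=\; -\phi^{\alpha/\beta}\sigma_k^{1/\beta} + \frac{\gamma\phi}{\omega}.
\end{equation*}
From the radial-graph description, using the explicit expression for $\nu$ recorded in \eqref{radial.quantities} (which yields $\langle \p_r,\nu\rangle = 1/\omega$ in the warped-product metric, since $g^{\HH}(\p_r,\p_r)=1$),
\begin{equation*}
\langle \p_t X,\nu\rangle \;=\; \p_t r \cdot \langle \p_r,\nu\rangle \;=\; \frac{\p_t r}{\omega}.
\end{equation*}
Multiplying both sides of the resulting equality by $\omega$ gives $\p_t r = -\phi^{\alpha/\beta}\sigma_k^{1/\beta}\omega + \gamma\phi$, which is exactly \eqref{hradial.eq}.

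The argument is essentially a direct calculation and I do not foresee any serious obstacle. The only conceptual subtlety — and arguably the reason the specific conformal term $\gamma V$ appears in the flow to begin with — is that the Lagrangian flow $\p_t X = -\phi^{\alpha/\beta}\sigma_k^{1/\beta}\nu + \gamma V$ carries a nonzero tangent-to-$\MM_t$ component coming from $\gamma V$, and one must recognize that passing to the radial-graph parametrization absorbs this tangential piece into a diffeomorphism of $\SSS^n$, leaving the scalar PDE for $r$ unchanged. As an alternative derivation, one could apply \corref{s2.cor1} to write $\p_t r = \langle \p_t X, V\rangle/\phi$ at any evolving point and then reconcile the resulting expression with the graph description — this reconciliation is precisely the tangential-reparametrization freedom just noted.
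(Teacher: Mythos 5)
Your proof is correct and follows essentially the same route as the paper: pass to a normal-velocity description by observing that the tangential part of $\gamma V$ only reparametrizes, then convert the normal speed $-\Phi+\gamma u$ into $\partial_t r$ via the factor $\omega$. The only difference is that the paper cites Gerhardt's identity ``$\partial_t X=f\nu\Rightarrow \partial_t r=f\omega$'' as a black box, while you derive it on the spot from $\langle\partial_r,\nu\rangle=1/\omega$, which is a small but welcome self-contained step.
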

\begin{proof}
	Up to a tangential diffeomorphism, the flow \eqref{hflow} is equvalent to the following flow
	$$
	\frac{\partial}{\partial t} X(x, t) =\left(-{\left({\phi(r)}\right)}^{\frac{\alpha}{\beta}} \sigma_{k}(\kappa)^{\frac{1}{\beta}} +\gamma u\right)\nu(x, t).
	$$
	By \cite{Gerhardt-2007}, It is known that if a closed hypersurface which is a radial graph satisfies
	$$
	\partial_{t} X=f \nu,
	$$
	then the evolution of the scalar function $r$ satisfies
	$$
	\partial_{t} r=f \omega.
	$$
	Thus
	$$
	\frac{\partial}{\partial t} r =\left(-{\left({\phi(r)}\right)}^{\frac{\alpha}{\beta}} \sigma_{k}(\kappa)^{\frac{1}{\beta}} +\gamma u\right)\omega
	=-{\left({\phi(r)}\right)}^{\frac{\alpha}{\beta}} \sigma_{k}(\kappa)^{\frac{1}{\beta}} \omega+\gamma \phi.
	$$
\end{proof}

\subsection{Evolution of geometric quantities}

For convenience, we rewrite \eqref{hflow} as
\begin{equation}\label{flow}
	\frac{\partial}{\partial t} X(x, t)=-\Phi \nu(x, t)+\gamma V(x,t).
\end{equation}
where $\gamma= \tbinom{n}{k}$ and
\begin{equation}\label{Phi.def}
	\Phi=
		{\left({\phi(r)}\right)}^{\frac{\alpha}{\beta}} \sigma_{k}(\kappa)^{\frac{1}{\beta}}.
\end{equation}

We now derive some evolution equations along the flow \eqref{flow}. Pick any local coordinate chart $\left\{x_{i}\right\}_{i=1}^{n}$ of the hypersurface $ \MM_{t} $. We denote $ \nabla $ to be the covariant derivative on $ \MM_{t} $ induced by the covariant derivative $ D$ in $ \HH^{n+1} $. We denote $ g_{ij} $ and $ \left\langle\cdot, \cdot \right\rangle $ to be respectively the metric on $ \MM_{t} $ induced by $ g^{\HH} $ and the induced inner product of tangent vectors. We denote $\partial_{i}=\frac{\partial}{\partial x_{i}}, X_{i}=\partial_{i} X$. Recall the following identities in hyperbolic space, see \cite{AndrewsChen-2017}, \cite{Guan-2021}.
\begin{align}
		D_{X_i}X_j&=\Gamma_{ij}^kX_k-h_{i j} \nu,&\text {  (Gauss formula)}\label{basic.identities1}\\
		\nu_{i}&=h_{i j} g^{j l} X_{l}, &\text{  (Weingarten formula)}\label{basic.identities2}\\
		h_{i j, l}&=h_{i l, j},  &\text {  (Codazzi equation)}\label{basic.identities3} \\
		R_{i j r s}&=h_{i r} h_{j s}-h_{i s} h_{j r}-g_{ir}g_{js}+g_{is}g_{jr},  &\text {  (Gauss equation)}\label{basic.identities4}
\end{align}
where $\Gamma_{i j}^{k}$ is the Christoffel symbol of the metric of $\mathcal{M}_{t} $. Combining the Gauss and Codazzi equations gives the following generalization of Simons' identity, we have
\begin{equation}\label{basic.identities5}
	\nabla_{(i} \nabla_{j)} h_{k l}=\nabla_{(k} \nabla_{l)} h_{i j}+h_{i j} (h^2)_{kl}-h_{k l} (h^2)_{ij}-g_{i j} h_{k l}+g_{k l} h_{i j}
\end{equation}
where $ (h^2)_{ij} = h_{i}^{k}h_{kj} $ and $ A_{(ab)} $ means the symmetrization of the tensor $ A_{ab} $, that is, $ A_{(ab)} = \frac{1}{2}\left(A_{ab}+A_{ba}\right) $.
\begin{lem}\label{evolve.quantities}
	Along the flow \eqref{flow}, we have the following evolution equations
	\begin{equation}
		\frac{\partial}{\partial t} g_{i j} =-2 \Phi h_{i j}+2 \gamma\phi' g_{i j}, \quad \frac{\partial}{\partial t} \nu=\nabla \Phi.
	\end{equation}
	\begin{equation}
		\frac{\partial}{\partial t} u = -\phi'\Phi+\phi'\gamma u + \langle V,\nabla\Phi\rangle.
	\end{equation}
	\begin{equation}
		\frac{\partial}{\partial t} h_{i j} = \nabla_{i} \nabla_{j} \Phi-\Phi h_{i}^{l} h_{l j}+\gamma\phi' h_{i j} + (\gamma u -\Phi)g_{ij}.
	\end{equation}
	Here $ \nabla $ denotes the covariant derivative on $\mathcal{M}_{t}$.
\end{lem}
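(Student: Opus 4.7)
The plan is to derive each of the four identities by direct differentiation of the defining formulas, invoking the Gauss and Weingarten relations \eqref{basic.identities1}--\eqref{basic.identities2}, the identity $D_X V = \phi'(r)\,X$ from \lemref{propertyV}, and the fact that $\HH^{n+1}$ has constant sectional curvature $-1$ (encoded in the Gauss equation \eqref{basic.identities4}). Throughout I use $[\p_t,\p_i]=0$, which lets me exchange $D_t$ and $D_i$ at the cost of the ambient Riemann tensor.

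For the metric, differentiating $g_{ij}=\langle X_i, X_j\rangle$ in $t$ and commuting $D_t$ with $\p_i$ gives $\p_t g_{ij}=\langle D_i X_t, X_j\rangle+\langle X_i, D_j X_t\rangle$. Substituting $X_t=-\Phi\nu+\gamma V$, the Weingarten formula gives $\langle D_i\nu, X_j\rangle=h_{ij}$, while \lemref{propertyV} gives $\langle D_i V, X_j\rangle=\phi' g_{ij}$, and symmetrization produces the first identity. For $\p_t\nu$, the constraint $|\nu|^2=1$ forces $\p_t\nu$ to be tangential, and differentiating $\langle\nu, X_i\rangle=0$ gives $\langle \p_t\nu, X_i\rangle=-\langle\nu, D_i X_t\rangle=\p_i\Phi$, identifying $\p_t\nu$ with the intrinsic gradient $\nabla\Phi$.

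For $u=\langle V,\nu\rangle$, a one-line calculation using $D_t V=\phi'(r)X_t$ (from \lemref{propertyV}) together with the formula for $\p_t\nu$ gives $\p_t u=\phi'\langle X_t,\nu\rangle+\langle V,\nabla\Phi\rangle$; substituting $\langle X_t,\nu\rangle=-\Phi+\gamma u$ yields the claim.

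The main obstacle is the second fundamental form evolution. The plan is to start from $h_{ij}=-\langle D_i X_j,\nu\rangle$ (from Gauss) and compute
\[
\p_t h_{ij} = -\langle D_t D_i X_j,\nu\rangle-\langle D_i X_j,\p_t\nu\rangle,
\]
then use the commutation $D_t D_i X_j = D_i D_j X_t + \bar R(X_t,X_i)X_j$ to convert the first term into second derivatives of the flow velocity plus an ambient curvature piece. Expanding $D_i D_j X_t$ with $X_t=-\Phi\nu+\gamma V$: the $-\Phi\nu$ component, after differentiating $\nu$ twice via Weingarten, contributes $\p_i\p_j\Phi-\Phi h_i^l h_{lj}$ to $-\langle\cdot,\nu\rangle$, and the Christoffel correction $-\Gamma_{ij}^k\p_k\Phi$ coming from the second term $-\langle D_i X_j,\p_t\nu\rangle$ upgrades $\p_i\p_j\Phi$ to the intrinsic Hessian $\nabla_i\nabla_j\Phi$; the $\gamma V$ component, via \lemref{propertyV} and a single application of Gauss, collapses to $\gamma\phi' h_{ij}$; finally the ambient curvature term $-\langle\bar R(X_t,X_i)X_j,\nu\rangle$ in $\HH^{n+1}$, using $\langle X_i,\nu\rangle=0$ and $\langle X_t,\nu\rangle=-\Phi+\gamma u$, evaluates to $(\gamma u-\Phi)g_{ij}$. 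Summing the four contributions yields the stated identity; the delicate bookkeeping step is separating normal and tangential contributions consistently at each differentiation so that all non-covariant Christoffel corrections reassemble into the intrinsic Hessian $\nabla_i\nabla_j\Phi$ rather than an ambient Hessian.
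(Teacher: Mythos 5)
Your proposal is correct and follows essentially the same line as the paper's proof: differentiate the defining formulas, use Weingarten and $D_XV=\phi'X$ for the metric, normal, and support function, and for $h_{ij}$ start from $h_{ij}=-\langle D_iX_j,\nu\rangle$, commute $D_t$ past $D_i$ to pick up $\bar R(X_t,X_i)X_j$, expand $D_iD_jX_t$, and absorb the Christoffel term from $-\langle D_iX_j,\nabla\Phi\rangle$ to form the intrinsic Hessian. The identification of the ambient curvature contribution as $(\gamma u-\Phi)g_{ij}$ via $\langle X_t,\nu\rangle=-\Phi+\gamma u$ and $\langle X_i,\nu\rangle=0$ is exactly the paper's computation.
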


\begin{proof}By direct calculations using Lemma \ref{propertyV} and the equation \eqref{basic.identities2}, we have
	\begin{align*}
		\frac{\partial}{\partial t} g_{i j} &=\partial_{t}\left\langle\partial_{i} X, \partial_{j} X\right\rangle \\
		&=\left\langle\nabla_{i}(-\Phi \nu+\gamma V), \partial_{j} X\right\rangle+\left\langle\partial_{i} X, \nabla_{j}(-\Phi \nu+\gamma V)\right\rangle \\
		&=-\Phi\left(\left\langle\partial_{i} \nu, \partial_{j} X\right\rangle+\left\langle\partial_{i} X, \partial_{j} \nu\right\rangle\right)+2 \gamma\phi' g_{i j} \\
		&=-2 \Phi h_{i j}+2 \gamma\phi' g_{i j}.
	\end{align*}
	Since $\partial_t\nu$ is tangential, we have
	\begin{align*}
		&\frac{\partial}{\partial t} \nu =\left\langle\p_{t} \nu, \nabla_{j} X\right\rangle g^{i l} \partial_{l} X \\ =&-\left\langle\nu, \nabla_{j}(-\Phi \nu+\gamma V)\right\rangle g^{j l} \partial_{l} X =\nabla_{j} \Phi g^{j l} \partial_{l} X =\nabla \Phi .
	\end{align*}
	
	Now we calculate the evolution of the support function $ u $
	\begin{align*}
		&\frac{\partial}{\partial t} u
		=\partial_{t}\langle V, \nu\rangle
		=\langle \p_{t}V, \nu\rangle + \langle V, \p_{t} \nu\rangle
		=\langle \phi'\p_{t}X,\nu\rangle + \langle V, \p_{t} \nu\rangle\\
		=&\langle\phi'(-\Phi \nu+\gamma V), \nu\rangle+\langle V, \nabla \Phi\rangle =-\phi'\Phi+\phi'\gamma u + \langle V,\nabla\Phi\rangle.
	\end{align*}
	
	Now we calculate the evolution of $h_{i j}$. Note that $ \HH^{n+1} $ is of constant sectional curvature of $ K = -1 $, thus
	\begin{comment}
	\begin{align*}
		\frac{\partial}{\partial t} h_{i j}
		&=-\partial_{t}\left\langle\nabla_{i} \nabla_{j} X, \nu\right\rangle
		=-\left\langle \nabla_{X_t} \nabla_{i} X_j, \nu\right\rangle \\
		&=-\left\langle \nabla_{X_i} \nabla_{X_t} X_j + R(X_i,X_t)X_j, \nu \right\rangle \\
		&=-\left\langle \nabla_{X_i} \nabla_{X_j} X_t , \nu \right\rangle - R(X_i,X_t,X_j,\nu)\\
		&=-\left\langle\nabla_{i} \nabla_{j}(-\Phi \nu+\gamma V), \nu\right\rangle + \left\langle X_i,X_j\right\rangle \left\langle X_t,\nu\right\rangle \\
		&=-\left\langle\nabla_{i} \left(-(\nabla_{j}\Phi)\nu-\Phi\nu_j+\gamma \phi' X_j \right), \nu\right\rangle + g_{ij} \left\langle -\Phi\nu+\gamma V, \nu\right\rangle\\
		&=-\left\langle-(\nabla_{i}\nabla_{j}\Phi)\nu-\Phi\nabla_{i}\nu_j+\gamma(\nabla_{i}\phi')X_j+\gamma\phi'\nabla_{i}X_j, \nu\right\rangle+(\gamma u - \Phi)g_{ij}\\
		&=\nabla_{i} \nabla_{j} \Phi+\Phi h_{j}^{l}(- h_{l i})-\gamma\phi' (-h_{i j})+(\gamma u - \Phi)g_{ij}\\
		&=\nabla_{i} \nabla_{j} \Phi-\Phi h_{i}^{l} h_{l j}+\gamma\phi' h_{i j}+(\gamma u - \Phi)g_{ij}.
	\end{align*}
	\end{comment}
	\begin{align*}
		&\frac{\partial}{\partial t} h_{i j}
		=-{\partial_t}\left\langle D_{X_{i}} X_{j} , \nu\right\rangle\\
		=&-\left\langle D_{X_{t}} D_{X_{i}} X_{j}, \nu \right\rangle-\left\langle D_{X_{i}} X_{j}, \partial_{t} \nu\right\rangle\\
		=&-\left\langle D_{X_{i}} D_{X_{t}} X_{j}+\bar{R}\left(X_{t}, X_{i}\right) X_{j}, \nu\right\rangle-\left\langle D_{X_{i}} X_{j}, \nabla \Phi\right\rangle\\
		=&-\left\langle D_{X_{i}} D_{X_{j}} X_{t},\nu\right\rangle-\left\langle\bar{R}\left(X_{t}, X_{i}\right) X_{j}, \nu\right)-\left\langle D_{X_{i}} X_{j}, g^{k l} \partial_{l} \Phi X_{k}\right\rangle\\
		=&-\left\langle D_{X_{i}} D_{X_{j}}(-\Phi \nu+\gamma V), \nu\right\rangle+
		\left\langle X_{t}, \nu\right\rangle
		\left\langle X_{i}, X_{j}\right\rangle
		-\Gamma_{i j}^{k} \partial_{k} \Phi\\
		=&-\left\langle D_{X_{i}}\left(-\partial_{j} \Phi \nu-\Phi h_{j}^{l} X_{l}+\gamma \phi' X_{j}\right), \nu\right\rangle+g_{i j}(\gamma u-\Phi)-\Gamma_{i j}^{k} \partial_{k} \Phi\\
		=&\partial_{i} \partial_{j} \Phi-\Gamma_{i j}^{k} \partial_{k} \Phi
		-\Phi h_{j}^{l} h_{l_{i}}+\gamma{\phi}' h_{i j}+g_{i j}(\gamma u-\Phi)\\
		=&\nabla_{i}\nabla_{j}\Phi
		-\Phi h_{j}^{l} h_{l_{i}}+\gamma{\phi}' h_{i j}+g_{i j}(\gamma u-\Phi),
	\end{align*}
where $\bar{R}$ denotes the curvature tensor of the metric on $\mathbb{H}^{n+1}$.
%	Here we have used $ \nabla_{i}\nu_j = \nabla_{i}(h_{j}^{l}X_l) =
%	\nabla_{i}(h_{j}^{l})X_l+h_{j}^{l}\nabla_{i}X_l $.
\end{proof}

\begin{cor}\label{evolve.quantities1}
	Along the flow \eqref{flow}, we have the following evolution equations
	\begin{equation}
		\frac{\partial}{\partial t} g^{ij} = 2 \Phi h^{i j}-2 \gamma\phi' g^{i j}.
	\end{equation}
	\begin{equation}
		\frac{\partial}{\partial t} h_{i}^{j} = \nabla_{i}\nabla^{j}\Phi+\Phi h_{i}^{k} h_{k}^{j} - \gamma\phi'h_{i}^{j} +(\gamma u -\Phi)\delta_{i}^{j}.
	\end{equation}
\end{cor}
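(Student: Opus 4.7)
The plan is to deduce Corollary 2.8 directly from Lemma 2.7 by raising an index and differentiating the relation $g^{ij}g_{jk}=\delta^i_k$. There is no new geometric content; the work is purely algebraic.

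First I would handle the evolution of $g^{ij}$. Differentiating $g^{ik}g_{kj}=\delta^i_j$ in $t$ yields
\[
\frac{\partial}{\partial t} g^{ij} = -g^{ik}g^{jl}\frac{\partial}{\partial t} g_{kl}.
\]
Substituting $\frac{\partial}{\partial t} g_{kl}=-2\Phi h_{kl}+2\gamma\phi' g_{kl}$ from Lemma \ref{evolve.quantities} and contracting gives $2\Phi h^{ij}-2\gamma\phi' g^{ij}$, which is the claimed formula.

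Next I would compute the evolution of $h_i^{\,j}=g^{jk}h_{ik}$ by the product rule:
\[
\frac{\partial}{\partial t} h_i^{\,j} = \left(\frac{\partial}{\partial t} g^{jk}\right) h_{ik} + g^{jk}\frac{\partial}{\partial t} h_{ik}.
\]
For the first term I use the formula just obtained, which produces $2\Phi h_i^{\,l}h_l^{\,j}-2\gamma\phi' h_i^{\,j}$. For the second term I plug in the expression for $\frac{\partial}{\partial t} h_{ik}$ from Lemma \ref{evolve.quantities} and raise the index with $g^{jk}$; this yields $\nabla_i\nabla^j\Phi-\Phi h_i^{\,l}h_l^{\,j}+\gamma\phi' h_i^{\,j}+(\gamma u-\Phi)\delta_i^{\,j}$. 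Adding the two contributions, the $h_i^l h_l^j$ terms combine as $2-1=1$ and the $h_i^j$ terms combine as $-2+1=-1$, producing the stated identity.

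There is no serious obstacle here; the only point requiring a bit of care is making sure the raised-index Hessian $\nabla_i\nabla^j\Phi=g^{jk}\nabla_i\nabla_k\Phi$ commutes with the time derivative correctly, i.e.\ that $g^{jk}\nabla_i\nabla_k\Phi$ is really what appears after using $g^{jk}\partial_t h_{ik}$ together with the factor $g^{jk}$ coming from the Hessian. Since $\nabla_i\nabla_k\Phi$ is a tensor on $\mathcal{M}_t$ and the index raising is defined through the current metric, this identification is immediate. Everything else is a direct bookkeeping of the terms already present in Lemma \ref{evolve.quantities}.
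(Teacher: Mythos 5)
Your proposal is correct and follows essentially the same argument as the paper: differentiate $g^{ik}g_{kj}=\delta^i_j$ to get $\partial_t g^{ij}=-g^{ik}g^{jl}\partial_t g_{kl}$, substitute the formula for $\partial_t g_{ij}$ from Lemma \ref{evolve.quantities}, and then apply the product rule to $h_i^{\,j}=g^{jl}h_{il}$ and combine terms. The bookkeeping of the $h_i^l h_l^j$ and $h_i^j$ coefficients matches the paper's computation exactly.
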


\begin{proof}
	$$
	\frac{\partial}{\partial t} g^{i j}=-g^{i l}\left(\partial_{t} g_{l m}\right) g^{m j}=2 \Phi h^{i j}-2 \gamma\phi' g^{i j}.
	$$
	$$
	\begin{aligned}
		\frac{\partial}{\partial t} h_{i}^{j}
		&=\partial_{t} h_{i l} g^{l j}+h_{i l} \partial_{t} g^{l j} \\
		&=\left(\nabla_{i} \nabla_{l} \Phi-\Phi h_{i}^{k} h_{k l}+\gamma\phi' h_{i l}+(\gamma u - \Phi)g_{il}\right)g^{lj} + h_{i l}\left(2 \Phi h^{l j}-2 \gamma\phi' g^{l j}\right) \\
		&=\nabla_{i}\nabla^{j}\Phi+\Phi h_{i}^{k} h_{k}^{j} - \gamma\phi'h_{i}^{j} +(\gamma u -\Phi)\delta_{i}^{j}.
	\end{aligned}
	$$
\end{proof}
Now we calculate $ \nabla_{i}\nabla_{j}\Phi $ more precisely. Recall that $\Phi=\phi^{\frac{\alpha}{\beta}}F$, where $F=\sigma_k^{\frac{1}{\beta}}$.
\begin{lem}
		\begin{align}
			\nabla_{(i}\nabla_{j)}\Phi
			\nonumber
			=&(\phi(r))^{\frac{\alpha}{\beta}}\dot{F}^{pq}\nabla_{p}\nabla_{q}h_{ij}
			+ (\phi(r))^{\frac{\alpha}{\beta}}\ddot{F}^{pq,rs}h_{rs,i}h_{pq,j} \\
			\nonumber
			&+\left(\frac{\alpha}{\beta}\right)^2\frac{(\phi'(r))^2}{(\phi(r))^2}(\nabla_{i}r)(\nabla_{j}r)
			\Phi -
			\frac{\alpha}{\beta}\frac{1}{(\phi(r))^2}(\nabla_{i}r)(\nabla_{j}r)\Phi -
			\frac{\alpha}{\beta}\frac{\phi'(r)}{(\phi(r))^2}uh_{ij}\Phi \\
			\nonumber
			&+\frac{\alpha}{\beta}\frac{(\phi'(r))^2}{(\phi(r))^2}
			\left(
			g_{ij}-(\nabla_{i}r)(\nabla_{j}r)
			\right)\Phi
			+2\frac{\alpha}{\beta}\phi'(r) (\phi(r))^{\frac{\alpha}{\beta}-1}
			(\nabla_{(i}r)(\nabla_{j)}F)
			\\
			\nonumber
			&+
			\frac{1}{\beta}(\phi(r))^{\frac{\alpha}{\beta}}(\sigma_{k})^{\frac{1}{\beta}-1}
			\left(\sigma_1\sigma_{k}-(k+1)\sigma_{k+1}\right)h_{ij}
			-\frac{k}{\beta}\Phi\left(h^2\right)_{ij} \\
			\label{bigresult}
			&-\frac{k}{\beta}\Phi g_{ij} +
			\frac{n-k+1}{\beta}(\sigma_{k})^{\frac{1}{\beta}-1}(\phi(r))^{\frac{\alpha}{\beta}}\sigma_{k-1}h_{ij}.
		\end{align}
\end{lem}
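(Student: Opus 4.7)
The identity is purely local and computational: expand $\Phi=\phi^{\alpha/\beta}F$ by the product rule, reorganize using the hyperbolic Hessian of $r$, and then eliminate all second derivatives of $h$ via Simons' identity.

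\textbf{Step 1: First derivative and Hessian of $r$ on $\mathcal{M}_t$.} Apply the product rule to get
\[
\nabla_i \Phi \;=\; \tfrac{\alpha}{\beta}\tfrac{\phi'}{\phi}(\nabla_i r)\,\Phi \,+\, \phi^{\alpha/\beta}\,\nabla_i F.
\]
To differentiate this once more, I need the Hessian of $r$ along $\mathcal{M}_t$. Using Corollary \ref{s2.cor1} (so that $\nabla_i r=\langle X_i,V\rangle/\phi$), Lemma \ref{propertyV} ($D_XV=\phi' X$), the Gauss formula \eqref{basic.identities1} and $\langle V,\nu\rangle=u$, a direct computation gives
\[
\nabla_i\nabla_j r \;=\; \tfrac{\phi'}{\phi}\bigl(g_{ij}-(\nabla_i r)(\nabla_j r)\bigr) \,-\, \tfrac{u}{\phi}\,h_{ij}.
\]

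\textbf{Step 2: Assembling the ``ambient'' terms.} Differentiate $\nabla_i\Phi$ again, symmetrize in $i,j$, and substitute $\nabla_i\Phi$ back where it appears. Use the elementary identity $(\phi'/\phi)'=-1/\phi^2$, which follows from $\phi''=\phi$ and $(\phi')^2-\phi^2=1$ for $\phi=\sinh r$. This produces exactly four ambient contributions: the $(\nabla_i r)(\nabla_j r)\Phi$ coefficient collects as
\[
\Bigl(\tfrac{\alpha}{\beta}\Bigr)^2\tfrac{(\phi')^2}{\phi^2} - \tfrac{\alpha}{\beta}\tfrac{(\phi')^2}{\phi^2} - \tfrac{\alpha}{\beta}\tfrac{1}{\phi^2},
\]
the terms $\tfrac{\alpha}{\beta}\tfrac{(\phi')^2}{\phi^2}(g_{ij}-\nabla_i r\nabla_j r)\Phi$ and $-\tfrac{\alpha}{\beta}\tfrac{\phi'}{\phi^2}u\,h_{ij}\Phi$ come from $\tfrac{\alpha}{\beta}\tfrac{\phi'}{\phi}\Phi\,\nabla_i\nabla_j r$, and combining the two cross-terms involving $\nabla F$ and $\nabla r$ gives $2\tfrac{\alpha}{\beta}\phi'\phi^{\alpha/\beta-1}(\nabla_{(i}r)(\nabla_{j)}F)$. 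These are the first four lines of the claimed formula.

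\textbf{Step 3: Second derivative of $F$ and Simons' identity.} Expand
\[
\nabla_{(i}\nabla_{j)}F \;=\; \dot F^{pq}\nabla_{(i}\nabla_{j)}h_{pq} + \ddot F^{pq,rs}h_{pq,j}h_{rs,i},
\]
(the second piece is already $i,j$-symmetric by the symmetry of $\ddot F$). Apply the Simons-type identity \eqref{basic.identities5} with its constant-curvature correction $-g_{ij}h_{kl}+g_{kl}h_{ij}$ to rewrite
\[
\dot F^{pq}\nabla_{(i}\nabla_{j)}h_{pq} = \dot F^{pq}\nabla_p\nabla_q h_{ij} + h_{ij}\,\dot F^{pq}(h^2)_{pq} - (h^2)_{ij}\,\dot F^{pq}h_{pq} - g_{ij}\,\dot F^{pq}h_{pq} + h_{ij}\,\dot F^{pq}g_{pq}.
\]
Now substitute Corollary \ref{F.trace} for the three traces $\dot F^{pq}(h^2)_{pq}$, $\dot F^{pq}h_{pq}$, $\dot F^{pq}g_{pq}$ and multiply by $\phi^{\alpha/\beta}$. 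The four resulting terms are precisely the last two lines of the statement.

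\textbf{Main difficulty.} The derivation is essentially algebraic bookkeeping, but two points demand care: correctly deriving the tangential Hessian of $r$ in Step 1 (the $-uh_{ij}/\phi$ term is easy to lose, and the coefficient $\phi'/\phi$ depends on the warped-product identity $D_XV=\phi' X$), and keeping track of signs in the constant-curvature correction of Simons' identity \eqref{basic.identities5}. Once these are pinned down, collecting terms with the algebraic identities $\phi''=\phi$, $(\phi')^2-\phi^2=1$, and Corollary \ref{F.trace} produces \eqref{bigresult} line by line.
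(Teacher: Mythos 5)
Your proposal follows the paper's proof essentially step for step: the same product-rule expansion of $\Phi=\phi^{\alpha/\beta}F$, the same tangential Hessian $\nabla_i\nabla_j r=-\tfrac{u}{\phi}h_{ij}+\tfrac{\phi'}{\phi}\bigl(g_{ij}-(\nabla_ir)(\nabla_jr)\bigr)$ obtained from Corollary \ref{s2.cor1} and Lemma \ref{propertyV}, the same use of the Simons-type identity \eqref{basic.identities5}, and the same invocation of Corollary \ref{F.trace} for the contracted terms. One small presentational wrinkle in Step 2: you simultaneously list the full collected coefficient of $(\nabla_ir)(\nabla_jr)\Phi$ (which already absorbs the $-\tfrac{\alpha}{\beta}\tfrac{(\phi')^2}{\phi^2}$ contribution from $\tfrac{\phi'}{\phi}\Phi\,\nabla_i\nabla_jr$) and the unexpanded $\tfrac{\alpha}{\beta}\tfrac{(\phi')^2}{\phi^2}\bigl(g_{ij}-\nabla_ir\nabla_jr\bigr)\Phi$ term, which double-lists that same contribution — the underlying bookkeeping is right, but the final write-up should pick one grouping or the other to avoid ambiguity.
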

\begin{proof}
	Firstly
		\begin{align*}
			\nabla_{i}\Phi
			= &\nabla_{i}\left( (\phi(r))^{\frac{\alpha}{\beta}} F \right)
			= \frac{\alpha}{\beta}(\phi(r))^{\frac{\alpha}{\beta}-1}\phi'(r)(\nabla_{i}r)
			F +
			(\phi(r))^{\frac{\alpha}{\beta}} \nabla_{i}F\\
			=&\frac{\alpha}{\beta}\frac{\phi'(r)}{\phi(r)}\Phi\nabla_{i}r +
			(\phi(r))^{\frac{\alpha}{\beta}} \nabla_{i}F.
		\end{align*}
	Therefore,
	\begin{equation}
		\begin{aligned}\label{temp3}
			\nabla_{i}\nabla_{j}\Phi
			=& \nabla_{i}\left( \frac{\alpha}{\beta}\frac{\phi'(r)}{\phi(r)}\Phi\nabla_{j}r +
			(\phi(r))^{\frac{\alpha}{\beta}} \nabla_{j}F \right) \\
			=& -\frac{\alpha}{\beta}\frac{1}{(\phi(r))^2}(\nabla_{i}r)(\nabla_{j}r)\Phi +
			\frac{\alpha}{\beta}\frac{\phi'(r)}{\phi(r)}(\nabla_{i}\nabla_{j}r)\Phi \\
&+\frac{\alpha}{\beta}\frac{\phi'(r)}{\phi(r)}(\phi(r))^{\frac{\alpha}{\beta}}\nabla_{i}r \nabla_{j}F
			+			(\phi(r))^{\frac{\alpha}{\beta}}\nabla_{i}\nabla_{j}F \\
			&+ \frac{\alpha}{\beta}\frac{\phi'(r)}{\phi(r)}\nabla_{j}r
			\left(
			\frac{\alpha}{\beta}\frac{\phi'(r)}{\phi(r)}\Phi\nabla_{i}r +
			(\phi(r))^{\frac{\alpha}{\beta}} \nabla_{i}F
			\right).
		\end{aligned}
	\end{equation}
	Note that by \corref{s2.cor1},
		\begin{align}
			\nonumber
			&\nabla_{i}\nabla_{j}r
			=\nabla_{i}\left( \frac{\langle X_j,V\rangle}{\phi(r)} \right)
			=\frac{\langle \nabla_{i} X_j,V\rangle}{\phi(r)} +
			\frac{\langle X_j,\nabla_{i} V\rangle}{\phi(r)} -
			\frac{\phi'(r)}{(\phi(r))^2}(\nabla_{i}r)\langle X_j,V\rangle \\
			\nonumber
			=&\frac{\langle -h_{ij}\nu,V\rangle}{\phi(r)} +
			\frac{\langle X_j,\phi'(r) X_i\rangle}{\phi(r)} -
			\frac{\phi'(r)}{\phi(r)}(\nabla_{i}r)(\nabla_{j}r) \\
			\label{temp4}
			=&-\frac{u}{\phi(r)}h_{ij} + \frac{\phi'(r)}{\phi(r)}g_{ij} - \frac{\phi'(r)}{\phi(r)}(\nabla_{i}r)(\nabla_{j}r).
		\end{align}
	We also have
	\begin{equation}\label{temp5}
			\nabla_{i}\nabla_{j}F
			=\nabla_{i}\left( \dot{F}^{pq}\nabla_{j}h_{pq} \right)
			=\ddot{F}^{pq,rs}\nabla_{i}h_{rs}\nabla_{j}h_{pq} +
			\dot{F}^{pq}\nabla_{i}\nabla_{j}h_{pq}
	\end{equation}
	Plugging \eqref{temp4} and \eqref{temp5} into \eqref{temp3} and symmetrize the tensor $ \nabla_{i}\nabla_{j}\Phi $, we get
	\begin{align*}
		\nabla_{(i}\nabla_{j)}\Phi
		=&\left(\frac{\alpha}{\beta}\right)^2
		\frac{(\phi'(r))^2}{(\phi(r))^2}(\nabla_{i}r)(\nabla_{j}r)
		\Phi -
		\frac{\alpha}{\beta}\frac{1}{(\phi(r))^2}(\nabla_{i}r)(\nabla_{j}r)\Phi \\
		&+\frac{\alpha}{\beta}\frac{\phi'(r)}{\phi(r)}
		\left(
		-\frac{u}{\phi(r)}h_{ij} + \frac{\phi'(r)}{\phi(r)}g_{ij} - \frac{\phi'(r)}{\phi(r)}(\nabla_{i}r)(\nabla_{j}r)
		\right)\Phi \\
		&+
		2\frac{\alpha}{\beta}\phi'(r) (\phi(r))^{\frac{\alpha}{\beta}-1}
		(\nabla_{(i}r)(\nabla_{j)}F)
		+(\phi(r))^{\frac{\alpha}{\beta}}\nabla_{(i}\nabla_{j)}F \\
		=&\left(\frac{\alpha}{\beta}\right)^2
		\frac{(\phi'(r))^2}{(\phi(r))^2}(\nabla_{i}r)(\nabla_{j}r)
		\Phi -
		\frac{\alpha}{\beta}\frac{1}{(\phi(r))^2}(\nabla_{i}r)(\nabla_{j}r)\Phi  \\
		&-\frac{\alpha}{\beta}\frac{\phi'(r)}{(\phi(r))^2}uh_{ij}\Phi + \frac{\alpha}{\beta}\frac{(\phi'(r))^2}{(\phi(r))^2}
		\left(
		g_{ij}-(\nabla_{i}r)(\nabla_{j}r)
		\right)\Phi \\
		&+2\frac{\alpha}{\beta}\phi'(r) (\phi(r))^{\frac{\alpha}{\beta}-1}
		(\nabla_{(i}r)(\nabla_{j)}F) \\
		&+(\phi(r))^{\frac{\alpha}{\beta}}
		\left(
		\ddot{F}^{pq,rs}h_{rs,i}h_{pq,j}+
		{F}^{pq}\nabla_{(i}\nabla_{j)}h_{pq}
		\right).
	\end{align*}
	By using generalisation of Simons' identity in \eqref{basic.identities5}, we have that
	$$
	\nabla_{(i} \nabla_{j)} h_{k l}=\nabla_{(k} \nabla_{l)} h_{i j}+h_{i j} (h^2)_{kl}-h_{k l} (h^2)_{ij}-g_{i j} h_{k l}+g_{k l} h_{i j}.
	$$
	Thus
	\begin{align*}
		\nabla_{(i}\nabla_{j)}\Phi
		=&\left(\frac{\alpha}{\beta}\right)^2\frac{(\phi'(r))^2}{(\phi(r))^2}(\nabla_{i}r)(\nabla_{j}r)
		\Phi -
		\frac{\alpha}{\beta}\frac{1}{(\phi(r))^2}(\nabla_{i}r)(\nabla_{j}r)\Phi -
		\frac{\alpha}{\beta}\frac{\phi'(r)}{(\phi(r))^2}uh_{ij}\Phi \\ &+\frac{\alpha}{\beta}\frac{(\phi'(r))^2}{(\phi(r))^2}
		\left(
		g_{ij}-(\nabla_{i}r)(\nabla_{j}r)
		\right)\Phi + (\phi(r))^{\frac{\alpha}{\beta}}\ddot{F}^{pq,rs}h_{rs,i}h_{pq,j} \\
		&+
		2\frac{\alpha}{\beta}\phi'(r) (\phi(r))^{\frac{\alpha}{\beta}-1}
		(\nabla_{(i}r)(\nabla_{j)}F) \\
		&+(\phi(r))^{\frac{\alpha}{\beta}}\dot{F}^{pq}
		\left(
		\nabla_{(p} \nabla_{q)} h_{i j}+h_{i j} (h^2)_{pq}-h_{pq} (h^2)_{ij}-
		g_{i j} h_{pq}+g_{pq} h_{i j}
		\right).
		%=&\left(\frac{\alpha}{\beta}\right)^2
%		\frac{(\phi'(r))^2}{(\phi(r))^2}(\nabla_{i}r)(\nabla_{j}r)
%		\Phi -
%		\frac{\alpha}{\beta}\frac{1}{(\phi(r))^2}(\nabla_{i}r)(\nabla_{j}r)\Phi -
%		\frac{\alpha}{\beta}\frac{\phi'(r)}{(\phi(r))^2}uh_{ij}\Phi \\ &+\frac{\alpha}{\beta}\frac{(\phi'(r))^2}{(\phi(r))^2}
%		\left(
%		g_{ij}-(\nabla_{i}r)(\nabla_{j}r)
%		\right)\Phi + (\phi(r))^{\frac{\alpha}{\beta}}\ddot{F}^{pq,rs}h_{rs,i}h_{pq,j} \\
%		&+
%		2\frac{\alpha}{\beta}\phi'(r) (\phi(r))^{\frac{\alpha}{\beta}-1}
%		(\nabla_{(i}r)(\nabla_{j)}F) +
%		(\phi(r))^{\frac{\alpha}{\beta}}\dot{F}^{pq}\nabla_{p}\nabla_{q}h_{ij} \\
%		&+(\phi(r))^{\frac{\alpha}{\beta}}\tr(DF\cdot A^2)h_{ij} -
%		(\phi(r))^{\frac{\alpha}{\beta}}\tr(DF\cdot A)\left(h^2\right)_{ij} \\ &-(\phi(r))^{\frac{\alpha}{\beta}}\tr(DF\cdot A)g_{ij} +
%		(\phi(r))^{\frac{\alpha}{\beta}}\tr(DF)h_{ij}.
	\end{align*}
	Plugging \corref{F.trace} into the last line of the above equation, we obtain \eqref{bigresult}.
\end{proof}

\section{$ C^0 $ and $ C^1 $ estimates}\label{sec:3}

In this section, we establish the needed a priori estimates of the radial function $ r $ and its gradient $ \bar{\nabla}r $ of the solution $ \MM_{t} $.

\begin{lem}\label{C0estimate}
	Let $r(\cdot, t)$ be a smooth solution to \eqref{hradial.eq} on $\mathbb{S}^{n} \times[0, T)$.
	\begin{comment}
	(1) In the sphere case, if $\alpha \geq k+1$, and $ \mathcal{M}_0 $ totally lies in the half sphere, that is, $\max _{\mathbb{S}^{n}} r(\cdot, 0) <  \frac{\pi}{2}$,
	then there is a positive constant $C\in(0,\frac{\pi}{2})$ depending only on k, $\alpha$, $\max _{\mathbb{S}^{n}} r(\cdot, 0)$ and $\min _{\mathbb{S}^{n}} r(\cdot, 0)$ such that
	$$
	1 / C \leq r(\cdot, t) \leq C \quad \forall t \in[0, T)
	$$
	\end{comment}
	If $\alpha > k+\beta$, there is a positive constant $C$ depending only on k, $\alpha$, $\max _{\mathbb{S}^{n}} r(\cdot, 0)$ and $\min _{\mathbb{S}^{n}} r(\cdot, 0)$ such that
	$$
	1 / C \leq r(\cdot, t) \leq C \quad \forall t \in[0, T).
	$$
\end{lem}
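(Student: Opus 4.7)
The strategy is to apply the parabolic maximum principle directly to the scalar equation \eqref{hradial.eq}. Setting $r_{\max}(t)=\max_{\SSS^n}r(\cdot,t)$ and $r_{\min}(t)=\min_{\SSS^n}r(\cdot,t)$, the goal is to show both satisfy ODE inequalities that trap them in a bounded interval away from zero.

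At a spatial extremum of $r$ one has $\bar{\nabla}r=0$, hence $\omega=1$, and the formula \eqref{radial.quantities} for the Weingarten map simplifies to
$$h_j^i=-\F{1}{\phi(r)^2}\delta^{ik}\bar{\nabla}_k\bar{\nabla}_jr+\coth(r)\,\delta_j^i.$$
At a maximum, $\bar{\nabla}^2 r\leq 0$ forces $\kappa_i\geq\coth r$ for every $i$, so $\sigma_k\geq\gamma\coth^k r$. At a minimum, $\bar{\nabla}^2 r\geq 0$ gives $\kappa_i\leq\coth r$; this translates directly (by tracing) only into $\sigma_1\leq n\coth r$, and to obtain an upper bound on $\sigma_k$ I would invoke the Newton--Maclaurin inequality \lemref{Maclaurin} with $l=1$, $m=k$, which is applicable because $\MM_t$ is $k$-convex throughout the flow. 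This yields $\sigma_k\leq\gamma\coth^k r$ at the minimum as well.

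Feeding these one-sided curvature bounds into \eqref{hradial.eq} at the corresponding extrema produces the ODE comparisons
$$\F{d}{dt}r_{\max}\leq\sinh r_{\max}\bigl(\gamma-\gamma^{1/\beta}\sinh^{\alpha/\beta-1}r_{\max}\coth^{k/\beta}r_{\max}\bigr),$$
$$\F{d}{dt}r_{\min}\geq\sinh r_{\min}\bigl(\gamma-\gamma^{1/\beta}\sinh^{\alpha/\beta-1}r_{\min}\coth^{k/\beta}r_{\min}\bigr).$$
For large $r$, the quantity $\sinh^{\alpha/\beta-1}r\,\coth^{k/\beta}r$ grows like $2^{1-\alpha/\beta}e^{(\alpha/\beta-1)r}$ and so diverges whenever $\alpha>\beta$; consequently the first bracket turns negative beyond some explicit $R_+$ and $r_{\max}(t)\leq\max\{r_{\max}(0),R_+\}$. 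For small $r$, the same quantity behaves like $r^{(\alpha-k-\beta)/\beta}$, which tends to zero exactly when $\alpha>k+\beta$; under this hypothesis the second bracket becomes positive below some $R_-$ and $r_{\min}(t)\geq\min\{r_{\min}(0),R_-\}$.

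The main (mild) subtlety is the lower bound: the hypothesis $\alpha>k+\beta$ is used precisely to ensure that the driving speed $\phi^{\alpha/\beta}\sigma_k^{1/\beta}$, estimated from above via Newton--Maclaurin, vanishes faster than the conformal term $\gamma\phi(r)\sim\gamma r$ as $r\to 0^+$. The upper bound requires only $\alpha>\beta$, which is implied. The Newton--Maclaurin step for bounding $\sigma_k$ at the minimum is the only place a genuine ingredient from Section~\ref{sec:2} is needed; everything else is a routine ODE comparison.
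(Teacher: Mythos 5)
Your proposal is correct and follows essentially the same route as the paper: evaluate the radial equation \eqref{hradial.eq} at spatial extrema of $r$, where $\omega=1$ and the sign of $\bar{\nabla}^2 r$ yields a one-sided bound on the Weingarten map, then close an ODE comparison for $r_{\max}$ and $r_{\min}$ using that $(\sinh r)^{(\alpha-k-\beta)/\beta}(\cosh r)^{k/\beta}$ is increasing, vanishes as $r\to 0^+$ (this is exactly where $\alpha>k+\beta$ enters), and diverges as $r\to\infty$. The only cosmetic difference is that at the minimum you pass through $\sigma_1\leq n\coth r$ and Newton--Maclaurin, whereas the paper bounds $\sigma_k$ directly from the matrix inequality $h_i^j\leq\coth r\,\delta_i^j$; both steps rely on the same $k$-convexity of $\MM_t$ (needed so that the relevant monotonicity holds), and both yield the same estimate, so this is a stylistic variant rather than a different argument.
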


\begin{proof}
	By \eqref{hradial.eq}, we conclude that
	$$
	\frac{\p r}{\p t} = -\Phi\omega+\gamma \phi.
	$$
	Here by \eqref{Phi.def},
	$$
	\Phi=
		{\left({\phi(r)}\right)}^{\frac{\alpha}{\beta}} \sigma_{k}(\kappa)^{\frac{1}{\beta}}.
	$$
	For each time $ t $, if $ r(x,t) $ attains its spatial minimum at the point $ (x_t,t) $, then we have
		\begin{align*}
			&\bar{\nabla}_{i}r = 0,\text{  for all  }1\leq i \leq n,\\
			&\bar{\nabla}_{i}\bar{\nabla}_{j}r \geq 0,\text{  as a matrix}.
		\end{align*}
	Hence $ \omega = \sqrt{1+|\bar{\nabla} \varphi|^2} = \sqrt{1+\frac{|\bar{\nabla} r|^2}{\phi^2}}  = 1$ at $ (x_t,t) $. Therefore we have $ u = \frac{\phi}{\omega} = \phi $ and
	$$ \frac{\p r}{\p t} = -\Phi+\gamma\phi\quad \text{  at  }(x_t,t). $$
	Moreover, by the last equation of \eqref{radial.quantities}, at $ (x_t,t) $,
		%\begin{align*}
%			&g_{ij} = \phi^2\delta_{i j},\\
%			&g^{ij} = \frac{1}{\phi^2}\delta^{i j},\\
%			&h_{ij} = \phi\phi'\delta_{i j}-\bar{\nabla}_i\bar{\nabla}_jr \leq \phi\phi'\delta_{i j},\text{  as a matrix}.
%		\end{align*}
%	Hence at $ (x_t,t) $,
$$ h_{i}^{j} = h_{ik}g^{kj} \leq \phi\phi'\delta_{i k}\cdot\frac{1}{\phi^2}\delta^{k j} = \frac{\phi'}{\phi}\delta_{i}^{j}  $$
	and $$ \sigma_{k}(\kappa) = \sigma_{k}(h_{i}^{j}) \leq \sigma_{k}(\frac{\phi'}{\phi}\delta_{i}^{j}) = \frac{{\left(\phi'\right)}^k}{\phi^k}\tbinom{n}{k} = \frac{{\left(\phi'\right)}^k}{\phi^k}\gamma. $$
	
	We now prove the existence of uniform positive lower bound of $ r $. The proof of the existence of uniform positive upper bounds is similar.
	\begin{comment}
	(1) In sphere case, $ \phi(r) = \sin(r) $, $ \Phi = {\left(\tan(r)\right)}^{\alpha} \sigma_{k}\leq {\left(\tan(r)\right)}^{\alpha-k}\gamma $,
	hence
	$$
	\frac{\p r}{\p t} = -\Phi+\gamma\phi
	\geq -\gamma{\left(\tan(r)\right)}^{\alpha-k}+\gamma \sin(r)
	= -\gamma\sin(r)\left(\frac{(\sin(r))^{\alpha-k-1}}{(\cos(r))^{\alpha-k}}-1\right).
	$$
	By $ \alpha\geq k+1 $, we know that $ \frac{(\sin(r))^{\alpha-k-1}}{(\cos(r))^{\alpha-k}} $ is a monotonically increasing function as $ r\in(0,\frac{\pi}{2}) $.
	This function takes value $ 0 $ when $ r=0 $, and tends to $ +\infty $ and $ r $ tends to $ \frac{\pi}{2} $.\\
	So there exists a unique $ \hat{r} = \hat{r}(k,\alpha)\in(0,\frac{\pi}{2}) $,
	such that $ \frac{(\sin(\hat{r}))^{\alpha-k-1}}{(\cos(\hat{r}))^{\alpha-k}} = 1 $.
	Hence when $ r(x_0,t_0) = r_{\min} (t) \leq \hat{r} $,
	$ \frac{(\sin(\hat{r}))^{\alpha-k-1}}{(\cos(\hat{r}))^{\alpha-k}} \leq 1 $,
	hence$ \frac{d}{dt}r_{\min}(t) = \frac{\p r}{\p t}(x_0,t_0)\geq 0 $.
	Hence
	$ r_{\min}(t) \geq \min \{ \min_{\mathbb{S}^{n}}r(\cdot,0), \hat{r}(k,\alpha) \} $.
	That is,
	$$ r(\cdot,t) \geq \min \{ \min_{\mathbb{S}^{n}}r(\cdot,0), \hat{r}(k,\alpha) \}. $$
	\end{comment}
	
	Since $ \phi(r) = \sinh(r) $, $ \Phi = {\left({\sinh(r)}\right)}^{\frac{\alpha}{\beta}} \sigma_{k}(\kappa)^{\frac{1}{\beta}}\leq {\left(\sinh(r)\right)}^{\frac{\alpha-k}{\beta}}{\left(\cosh(r)\right)}^{\frac{k}{\beta}}\gamma $, we have
	\begin{align*}
		\frac{\p r}{\p t} &
		= -\Phi+\gamma\phi
		\geq -\gamma{\left(\sinh(r)\right)}^{\frac{\alpha-k}{\beta}}{\left(\cosh(r)\right)}^{\frac{k}{\beta}}\gamma+
		\gamma \sinh(r) \\
		&= -\gamma\sinh(r) \left( {\left(\sinh(r)\right)}^{\frac{\alpha-k-\beta}{\beta}} {\left(\cosh(r)\right)}^{\frac{k}{\beta}}-1 \right).
	\end{align*}

	By $ \alpha > k+\beta $, we know that
	$ {\left(\sinh(r)\right)}^{\frac{\alpha-k-\beta}{\beta}} {\left(\cosh(r)\right)}^{\frac{k}{\beta}} $ is a monotonically increasing function as $ r>0 $.
	This function takes value $ 0 $ when $ r=0 $, and tends to $ +\infty $ and $ r $ tends to $ +\infty $. 	So there exists a unique $ \hat{r} = \hat{r}(k,\alpha,\beta)\in(0,+\infty) $,
	such that $ {\left(\sinh(\hat{r})\right)}^{\frac{\alpha-k-\beta}{\beta}} {\left(\cosh(\hat{r})\right)}^{\frac{k}{\beta}} = 1 $.
	
	Hence if $ r(x_t,t) = r_{\min} (t) \leq \hat{r} $, we have
	$ {\left(\sinh(r)\right)}^{\frac{\alpha-k-\beta}{\beta}} {\left(\cosh(r)\right)}^{\frac{k}{\beta}} \leq 1 $. It follows that
	\begin{equation*}
  	\frac{d}{dt}r_{\min}(t) = \frac{\p r}{\p t}(x_t,t)\geq 0.
	\end{equation*}
	This implies that
	$ r_{\min}(t) \geq \min \{ \min_{\mathbb{S}^{n}}r(\cdot,0), \hat{r}(k,\alpha) \} $.
	That is,
	$$ r(x,t) \geq \min \{ \min_{\mathbb{S}^{n}}r(\cdot,0), \hat{r}(k,\alpha,\beta) \}. $$
	By similar argument, we can prove that
	$$ r(x,t) \leq \max \{ \max_{\mathbb{S}^{n}}r(\cdot,0), \hat{r}(k,\alpha,\beta) \}. $$
\end{proof}
\begin{comment}
\begin{cor}
Along the flow, there exist a uniform constant $ C $ such that $ 1/C\leq u\leq C $
\end{cor}

\begin{cor}\label{tech.lemma1}
Let $X(\cdot, t)$ be a positive, smooth, uniformly convex hypersurface which solves the normalised flow \eqref{flow} on $\mathbb{S}^{n} \times[0, T)$, and encloses the origin. Then for any $t \in[0, T)$ and $p \in \mathcal{M}_{t}$, and any unit tangential vector $e(p) \in T_{p} \mathcal{M}_{t}$, we have
$$
\left(\left\langle e(p), \frac{V(p)}{\phi(|p|)}\right\rangle\right)^{2} \leq 1-\delta_{0},
$$
where $\delta_{0}>0$ is a small constant only depending on $\alpha, \min _{\mathbb{S}^{n} \times[0, T)} r$ and $\max _{\mathbb{S}^{n} \times[0, T)} r$
\end{cor}

\begin{lem}
Along the flow, there exist a uniform constant $ C $ such that $ 1/C\leq \left|Dr\right|\leq C $
\end{lem}
\end{comment}
\begin{lem}\label{C1estimate}
	Let $r(\cdot, t)$ be a  smooth $k$-convex solution to \eqref{hradial.eq} on $\mathbb{S}^{n} \times[0, T)$. If $\alpha > k+\beta$, we have
	$$
	|\bar{\nabla} r| \leq C.
	$$
	where $ \bar{\nabla} $ is the covariant derivative on $ \SSS^n $. $C$ is positive constant which only depends on $\mathcal{M}_{0}$.
\end{lem}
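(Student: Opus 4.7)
The strategy is to reduce the gradient estimate to a maximum-principle argument for the auxiliary function $\omega = \sqrt{1+|\bar\nabla\varphi|^2}$ on $\mathbb{S}^n$, where $\varphi$ is the radial variable introduced in \eqref{varphi.def}. Since \lemref{C0estimate} yields a uniform two-sided bound on $r$, hence on $\phi(r)$ and $\phi'(r)$, bounding $|\bar\nabla r|$ is equivalent to bounding $\omega$ from above (equivalently, bounding the support function $u=\phi/\omega$ from below away from zero).

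The plan is as follows. First, I would derive the evolution equation of $\omega$ (or, equivalently, of $|\bar\nabla\varphi|^2$) on the fixed base manifold $\mathbb{S}^n$, starting from the scalar PDE
\begin{equation*}
\varphi_t \;=\; \frac{r_t}{\phi} \;=\; -\frac{\Phi\,\omega}{\phi}+\gamma,
\end{equation*}
which follows directly from \eqref{hradial.eq}. Differentiating $\omega^2=1+\bar g^{ij}\bar\nabla_i\varphi\,\bar\nabla_j\varphi$ in $t$ gives an expression of the form
\begin{equation*}
\omega\,\partial_t\omega \;=\; \bar\nabla^i\varphi\,\bar\nabla_i\!\Bigl(-\frac{\Phi\omega}{\phi}+\gamma\Bigr),
\end{equation*}
which, after using $\bar\nabla_i\phi=\phi'\bar\nabla_i r=\phi'\phi\,\bar\nabla_i\varphi$ and expanding $\bar\nabla_i\Phi$ via $\Phi=\phi^{\alpha/\beta}F$, produces a "bad" term proportional to $|\bar\nabla\varphi|^2\Phi\phi'/\phi$, a "good" drift $\gamma\phi'|\bar\nabla\varphi|^2$, and extra terms coming from $\bar\nabla F$.

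Second, I would apply the maximum principle on $\mathbb{S}^n$ to the function $\omega(\cdot,t)$ at each time. At a spatial maximum point $x_t$, the critical-point conditions $\bar\nabla\omega=0$ and $\bar\nabla^2\omega\le 0$ translate to $\bar\nabla^i\varphi\,\bar\nabla_i\bar\nabla_j\varphi=0$ and an inequality on $\bar\Delta|\bar\nabla\varphi|^2$. These conditions can be used to replace the undesirable $\bar\nabla F$ terms by terms that are non-positive (after a Cauchy–Schwarz or direct absorption). The core point is that at such a maximum, $h_i^j$ controls the mean curvature from below through the formulas in \eqref{radial.quantities}, and the Newton–MacLaurin inequality (\lemref{Maclaurin}) then gives
\begin{equation*}
\sigma_k^{1/\beta}\;\ge\; c\bigl(\tfrac{\sigma_1}{n}\bigr)^{k/\beta}\;\ge\; c'\,\omega^{k/\beta}
\end{equation*}
(for $\omega$ large, using the explicit form of $h_{ij}$), so that $\Phi\gtrsim \phi^{\alpha/\beta}\omega^{k/\beta}$ at $x_t$.

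Finally, I would compare the $-\Phi\omega/\phi$ contribution to the positive $\gamma\phi$ term: the former grows like $\omega^{1+k/\beta}$ while all other terms (including those coming from curvature of $\mathbb{S}^n$ and from $\phi'$) grow like $\omega$ or slower. The precise condition $\alpha>k+\beta$ is what guarantees the correct exponent in $\omega$ after dividing by $\omega$, so that for $\omega$ sufficiently large one obtains $\partial_t\omega\le 0$ at $x_t$, forcing $\omega_{\max}(t)\le \max\{\omega(\cdot,0),C\}$. The main obstacle I anticipate is the bookkeeping of the many terms introduced by the warping factor $\phi(r)=\sinh r$ — in particular, matching the powers of $\omega$ coming from $\Phi$ and from the curvature identities, and showing that the third-order terms $\bar\nabla F$ can either be absorbed via the critical-point identity or estimated using the Codazzi equation. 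The condition $\alpha>k+\beta$ is precisely what gives the margin needed for this absorption to succeed.
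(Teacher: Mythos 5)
The overall strategy you outline — pass to the variable $\varphi$ with $d\varphi/dr=1/\phi$, evolve $\tfrac12|\bar\nabla\varphi|^2$ along the flow, and apply the maximum principle at its spatial maximum using the critical-point identity $\varphi_i\varphi_{ip}=0$ — is exactly what the paper does. But your closing argument has two genuine errors. First, the claimed ``good drift'' $\gamma\phi'|\bar\nabla\varphi|^2$ is spurious: in the $\varphi$-equation $\partial_t\varphi=-\Phi\omega/\phi+\gamma$, the constant $\gamma$ disappears under $\bar\nabla_i$, so it contributes nothing to $\partial_t\bigl(\tfrac12|\bar\nabla\varphi|^2\bigr)=\varphi_i\bar\nabla_i(\partial_t\varphi)$. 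Second, and more seriously, the Newton--Maclaurin inequality runs in the opposite direction: Lemma \ref{Maclaurin} gives $\sigma_k\le\binom{n}{k}\bigl(\sigma_1/n\bigr)^k$ for $\kappa\in\Gamma_{k-1}^+$, i.e.\ an \emph{upper} bound on $\sigma_k$ in terms of $\sigma_1$, not the lower bound $\sigma_k^{1/\beta}\ge c(\sigma_1/n)^{k/\beta}$ that you invoke. Since $k$-convexity gives no lower bound on $\sigma_k$ in terms of $H$ (take one small principal curvature), your claimed estimate $\Phi\gtrsim\phi^{\alpha/\beta}\omega^{k/\beta}$ does not follow, so the growth-rate comparison that you use to conclude $\partial_t\omega\le 0$ for $\omega$ large collapses.

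In fact no such growth comparison is needed, and the paper's proof is cleaner. After expanding $\varphi_i(\sigma_k^{1/\beta})_i$ via the explicit formulas for $h_{pq}$ and $g_{pq}$ in the $\varphi$-gauge, using $\varphi_i\varphi_{ip}=0$ and the Ricci identity to rewrite $\varphi_i\varphi_{pqi}$, one finds that at the spatial maximum \emph{every} term in $\partial_t\bigl(\tfrac12|\bar\nabla\varphi|^2\bigr)$ is manifestly $\le 0$ once $\alpha>k+\beta$: the second-order term is $\dot\sigma_k^{pq}\bigl(\tfrac12|\bar\nabla\varphi|^2\bigr)_{pq}\le 0$; the term $-\dot\sigma_k^{pq}\varphi_{ip}\varphi_{iq}\le 0$ and $-\dot\sigma_k^{pq}(\delta_{pq}|\bar\nabla\varphi|^2-\varphi_p\varphi_q)\le 0$ by positive semidefiniteness of $\dot\sigma_k^{pq}$; and the only place the hypothesis enters is through the coefficient $-(\alpha-k-\beta)/\beta\le 0$ multiplying $\cosh(-\varphi)\sigma_k^{1/\beta}\omega|\bar\nabla\varphi|^2(\sinh(-\varphi))^{-\alpha/\beta}$. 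This gives $\frac{d}{dt}\bigl(\tfrac12|\bar\nabla\varphi|^2\bigr)_{\max}\le 0$ for all $t$, not merely for $\omega$ large, so the gradient bound follows immediately from the $C^0$ estimate without any Newton--Maclaurin step. You should redo the computation and verify the signs term by term rather than relying on a growth-rate heuristic.
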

\begin{proof}By \eqref{hradial.eq}, we have
	$$
		\p_{t}r = -{\left(\sinh(r)\right)}^{\frac{\alpha}{\beta}} {\left(\sigma_{k}\right)}^{\frac{1}{\beta}} \omega+\gamma \sinh(r).
	$$
	As in \eqref{varphi.def}, take a new function $\varphi(r) = \log\left(1-\frac{2}{e^r+1}\right) $, then $ \varphi $ is monotonically increasing and $ \varphi(r)\in(-\infty,0) $, for all $ r>0 $, and
	it satisfies $$ \frac{d \varphi}{d r} = \frac{1}{\sinh(r)}. $$
	Then by $ \omega=\sqrt{1+|\bar{\nabla} \varphi|^{2}} $,
	and $ r = \log\left(\frac{2}{1-e^{\varphi}}-1\right) $, we have
	$$ \sinh(r) = \frac{1}{\sinh(-\varphi)}  \text{ and } \cosh(r) = \frac{\cosh(-\varphi)}{\sinh(-\varphi)}, $$ So
	$$
		\p_{t}\varphi = -(\sinh(-\varphi))^{-\frac{\alpha}{\beta}+1}\sqrt{1+|\bar{\nabla} \varphi|^2}\sigma_{k}^{\frac{1}{\beta}} + \gamma .
	$$
	
	For a fixed time $t$, we assume that $\theta_{t} \in \mathbb{S}^{n}$ is the spatial maximum point of $\frac{1}{2}|\bar{\nabla} \varphi|^{2}$. At $\left(\theta_{t}, t\right)$, we have
		\begin{align}\label{s3.C1-1}
			\p_{t}\left(\frac{1}{2} |\bar{\nabla} \varphi|^{2} \right)
			= &\varphi_{i}(\p_{t}\varphi)_{i}
			= \varphi_{i}\left[ -(\sinh(-\varphi))^{-\frac{\alpha}{\beta}+1}\sqrt{1+|\bar{\nabla} \varphi|^2}\sigma_{k}^{\frac{1}{\beta}} + \gamma \right]_{i}\nonumber\\
			=& \varphi_{i}\left[ -\left(\frac{\alpha}{\beta}-1\right)(\sinh(-\varphi))^{-\frac{\alpha}{\beta}}\cosh(-\varphi)\varphi_{i}\sqrt{1+|\bar{\nabla} \varphi|^2}\sigma_{k}^{\frac{1}{\beta}} \right]\nonumber\\
			&-\varphi_{i} (\sinh(-\varphi))^{-\frac{\alpha}{\beta}+1}\sqrt{1+|\bar{\nabla} \varphi|^2}(\sigma_{k}^{\frac{1}{\beta}})_{i}
			\nonumber \\
			=& -\left(\frac{\alpha}{\beta}-1\right)
			(\sinh(-\varphi))^{-\frac{\alpha}{\beta}}\cosh(-\varphi)
			\sigma_{k}^{\frac{1}{\beta}}
			\sqrt{1+|\bar{\nabla} \varphi|^2}|\bar{\nabla}\varphi|^2 \nonumber\\
			& - (\sinh(-\varphi))^{-\frac{\alpha}{\beta}+1}\sqrt{1+|\bar{\nabla} \varphi|^2}\varphi_{i}(\sigma_{k}^{\frac{1}{\beta}})_{i},
		\end{align}
	where we used $\left(\sqrt{1+|\bar{\nabla} \varphi|^{2}}\right)_{i} = 0$ at $\left(\theta_{t}, t\right)$. For the last term, we know
		\begin{align*}
			\left(\sigma_{k}^{\frac{1}{\beta}}\right)_{i}
			=
			&\frac{1}{\beta} \sigma_{k}^{\frac{1}{\beta}-1} \frac{\partial \sigma_{k}}{\partial h_{p}^q} (h_{p}^{q})_i
			= \frac{1}{\beta} \sigma_{k}^{\frac{1}{\beta}-1} \dot{\sigma_{k}}^{p q} g_{q s} (h_{p}^s)_{i}\\
			=&\frac{1}{\beta} \sigma_{k}^{\frac{1}{\beta}-1} \dot{\sigma_{k}}^{p q} \left( (g_{q s} h_{p}^s)_{i} - (g_{q s})_{i} h_{p}^s \right) \\
			=&\frac{1}{\beta} \sigma_{k}^{\frac{1}{\beta}-1}\dot{\sigma_{k}}^{p q} \left( (h_{pq})_{i} - h_{p}^s(g_{q s})_{i} \right),
		\end{align*}
	where we regarded $h_{p q}, g_{q s}$ as tensors on $\mathbb{S}^{n}$.

	By \eqref{radial.quantities}, we know that
	$$
		h_{i j} =\left(\sqrt{\phi^{2}+|\bar{\nabla} r|^{2}}\right)^{-1}\left(-\phi \bar{\nabla}_{i} \bar{\nabla}_{j} r+2 \phi^{\prime} r_{i} r_{j}+\phi^{2} \phi^{\prime}\delta_{i j}\right).
	$$
	Since $ \bar{\nabla}_{i}\varphi = \frac{1}{\phi}\bar{\nabla}_{i}r $, we have
	$$ \bar{\nabla}_{q}\bar{\nabla}_{p}\varphi = \frac{1}{\phi}\bar{\nabla}_{q}\bar{\nabla}_{p}r - \frac{\phi'}{\phi^2}\bar{\nabla}_{q}r\bar{\nabla}_{p}r. $$
	Thus
		\begin{align*}
			h_{pq}
			&=\frac{\phi}{\sqrt{1+| \bar{\nabla}\varphi |^2}}\left(-\varphi_{pq}+\phi^{\prime} \varphi_{p} \varphi_{q}+\phi^{\prime} \delta_{pq}\right)\\
			&=\frac{1}{\sinh(-\varphi)\sqrt{1+| \bar{\nabla}\varphi |^2}}\left(-\varphi_{pq}+\frac{\cosh(-\varphi)}{\sinh(-\varphi)} \varphi_{p} \varphi_{q}+\frac{\cosh(-\varphi)}{\sinh(-\varphi)} \delta_{pq}\right).
		\end{align*}
	Hence at $\left(\theta_{t}, t\right)$ we have
		\begin{align}\label{h.pqi}
			h_{pq,i}
			\nonumber
			=&\frac{\cosh(-\varphi)}{(\sinh(-\varphi))^2}
			\frac{1}{\sqrt{1+| \bar{\nabla}\varphi|^2}}\varphi_{i}
			\left(-\varphi_{pq}+\frac{\cosh(-\varphi)}{\sinh(-\varphi)} \varphi_{p} \varphi_{q}+\frac{\cosh(-\varphi)}{\sinh(-\varphi)} \delta_{pq}\right) \\
			\nonumber
			&+\frac{1}{\sinh(-\varphi)}
			\frac{1}{\sqrt{1+| \bar{\nabla}\varphi |^2}}
			\left(-\varphi_{pqi}+ \frac{1}{(\sinh(-\varphi))^2}\varphi_{i}\varphi_{p}\varphi_{q} +\frac{1}{(\sinh(-\varphi))^2}\varphi_{i}\delta_{pq} \right.\\
			\nonumber
			&+\left.\frac{\cosh(-\varphi)}{\sinh(-\varphi)}\varphi_{pi}\varphi_{q}+\frac{\cosh(-\varphi)}{\sinh(-\varphi)}\varphi_{p}\varphi_
			{qi} \right)\\
			\nonumber
			=& \frac{\cosh(-\varphi)}{\sinh(-\varphi)}
			\varphi_{i} h_{pq}+\frac{1}{\sinh(-\varphi)}
			\frac{1}{\sqrt{1+| \bar{\nabla}\varphi |^2}}
			\left(-\varphi_{pqi}+ \frac{1}{(\sinh(-\varphi))^2}\varphi_{i}(\varphi_{p}\varphi_{q}+\delta_{pq})  \right.\\
			&\left.+ \frac{\cosh(-\varphi)}{\sinh(-\varphi)}\varphi_{pi}\varphi_{q}+\frac{\cosh(-\varphi)}{\sinh(-\varphi)}\varphi_{p}\varphi_
			{qi} \right).
		\end{align}
	Also we know that
		\begin{align*}
			g_{ij}
			=&(\phi(r))^2\delta_{i j} + r_i r_j
			=(\phi(r))^2\left( \delta_{i j} + \frac{r_i r_j}{(\phi(r))^2} \right)\\
			=&(\phi(r))^2\left( \delta_{i j} + \varphi_{i}\varphi_{j} \right)
			=\frac{1}{(\sinh(-\varphi))^2}\left( \delta_{i j} + \varphi_{i}\varphi_{j} \right).
		\end{align*}
	Hence
		\begin{align}\label{g.pqi}
			g_{pq,i}
			\nonumber
			=&\frac{2\cosh(-\varphi)}{(\sinh(-\varphi))^3}\varphi_{i}\left( \delta_{pq} + \varphi_{p}\varphi_{q} \right) +
			\frac{1}{(\sinh(-\varphi))^2}\left( \varphi_{pi}\varphi_{q} + \varphi_{p}\varphi_{qi} \right)\\
			=&\frac{2\cosh(-\varphi)}{\sinh(-\varphi)}\varphi_{i}g_{pq}+
			\frac{1}{(\sinh(-\varphi))^2}\left( \varphi_{pi}\varphi_{q} + \varphi_{p}\varphi_{qi} \right).
		\end{align}
	By using \eqref{h.pqi}, \eqref{g.pqi}  and
	$ \bar{\nabla}_p\left(\frac{1}{2}|\bar{\nabla}\varphi|^2\right) = \varphi_{i}\varphi_{pi} = 0 $ at $ (\theta_{t},t) $, we get
		\begin{align}
			\nonumber
			&\varphi_{i}\left(\sigma_{k}^{\frac{1}{\beta}}\right)_{i} =
			\frac{1}{\beta} \sigma_{k}^{\frac{1}{\beta}-1} \dot{\sigma_{k}}^{p q} \varphi_{i} \left( (h_{pq})_{i} - h_{p}^s(g_{q s})_{i} \right)\\
			=
			&\frac{1}{\beta} \sigma_{k}^{\frac{1}{\beta}-1} \dot{\sigma_{k}}^{p q} \frac{\cosh(-\varphi)}{\sinh(-\varphi)}
			h_{pq}|\bar{\nabla}\varphi|^2 - 2\frac{1}{\beta} \sigma_{k}^{\frac{1}{\beta}-1} \dot{\sigma_{k}}^{pq} h_{p}^{s}
			\frac{\cosh(-\varphi)}{\sinh(-\varphi)}g_{qs}|\bar{\nabla}\varphi|^2 \nonumber \\
			\nonumber
			&+\frac{1}{\beta} \sigma_{k}^{\frac{1}{\beta}-1} \dot{\sigma_{k}}^{p q}
			\frac{1}{\sinh(-\varphi)}
			\frac{1}{\sqrt{1+| \bar{\nabla}\varphi |^2}}
			\left(-\varphi_{pqi}\varphi_{i}+ \frac{1}{(\sinh(-\varphi))^2}(\varphi_{p}\varphi_{q}+\delta_{pq})|\bar{\nabla}\varphi|^2 \right) \\=
			\nonumber
			&-\frac{k}{\beta} \sigma_{k}^{\frac{1}{\beta}} \frac{\cosh(-\varphi)}{\sinh(-\varphi)}
			|\bar{\nabla}\varphi|^2 +
			\frac{1}{\beta} \sigma_{k}^{\frac{1}{\beta}-1}
			\frac{1}{(\sinh(-\varphi))^3} \frac{1}{\sqrt{1+| \bar{\nabla}\varphi |^2}} \dot{\sigma_{k}}^{pq}\varphi_{p}\varphi_{q}|\bar{\nabla}\varphi|^2 \\
			\label{temp1}
			&-\frac{1}{\beta} \sigma_{k}^{\frac{1}{\beta}-1}
			\frac{1}{\sinh(-\varphi)}
			\frac{1}{\sqrt{1+| \bar{\nabla}\varphi |^2}}
			\dot{\sigma_{k}}^{pq}\varphi_{i}\varphi_{pqi}.
		\end{align}
	By the Ricci identity
	$$
			\varphi_{p q i}
			=\varphi_{p i q}+\varphi_{l} R_{l p q i}^{\mathbb{S}^{n}}
			=\varphi_{i p q}+\varphi_{l}\left(\delta_{l q} \delta_{p i}-\delta_{l i} \delta_{p q}\right)
			=\varphi_{i p q}+\varphi_{q} \delta_{p i}-\varphi_{i} \delta_{p q}.
	$$
	Thus
		\begin{equation}\label{varphi.i.varphi.pqi}
			\varphi_{i} \varphi_{p q i}=\varphi_{i} \varphi_{i p q}+\varphi_{p} \varphi_{q}-\delta_{p q}|\bar{\nabla} \varphi|^{2}=\left(\frac{1}{2}|\bar{\nabla} \varphi|^{2}\right)_{p q}-\varphi_{i p} \varphi_{i q}+\varphi_{p} \varphi_{q}-\delta_{p q}|\bar{\nabla} \varphi|^{2}.
		\end{equation}
	Plugging \eqref{varphi.i.varphi.pqi} into \eqref{temp1}, we get
		\begin{align}\label{varphi.i.sigma.k.i}
			\nonumber
			&\varphi_{i}(\sigma_{k}^{\frac{1}{\beta}})_{i}
			\\=
			\nonumber
			&-\frac{k}{\beta} \sigma_{k}^{\frac{1}{\beta}} \frac{\cosh(-\varphi)}{\sinh(-\varphi)}
			|\bar{\nabla}\varphi|^2 +
			\frac{1}{\beta} \sigma_{k}^{\frac{1}{\beta}-1}
			\frac{1}{(\sinh(-\varphi))^3} \frac{1}{\sqrt{1+| \bar{\nabla}\varphi |^2}} \dot{\sigma_{k}}^{pq}(\varphi_{p}\varphi_{q}+\delta_{pq})|\bar{\nabla}\varphi|^2 \\
			&-\frac{1}{\beta} \sigma_{k}^{\frac{1}{\beta}-1}
			\frac{1}{\sinh(-\varphi)}
			\frac{1}{\sqrt{1+| \bar{\nabla}\varphi |^2}}
			\dot{\sigma_{k}}^{pq}\left[
			\left(\frac{1}{2}|\bar{\nabla}\varphi|^{2}\right)_{p q}-\varphi_{i p} \varphi_{i q}+\varphi_{p} \varphi_{q}-\delta_{p q}|\bar{\nabla} \varphi|^{2}
			\right].
		\end{align}
	Substituting \eqref{varphi.i.sigma.k.i} into \eqref{s3.C1-1} gives that
		\begin{align}
			\nonumber
			&\p_{t}\left(\frac{1}{2} |\bar{\nabla} \varphi|^{2} \right)
			\\
			=
			\nonumber
			&
			-(\frac{\alpha}{\beta}-1)(\sinh(-\varphi))^{-\frac{\alpha}{\beta}}\cosh(-\varphi)\sigma_{k}^{\frac{1}{\beta}}\sqrt{1+|\bar{\nabla} \varphi|^2}|\bar{\nabla}\varphi|^2 \\
			\nonumber
			&+(\sinh(-\varphi))^{-\frac{\alpha}{\beta}+1}\sqrt{1+|\bar{\nabla} \varphi|^2}\cdot
			\frac{k}{\beta} \sigma_{k}^{\frac{1}{\beta}-1} \frac{\cosh(-\varphi)}{\sinh(-\varphi)}
			\sigma_{k}
			|\bar{\nabla}\varphi|^2 \\
			\nonumber
			 &-(\sinh(-\varphi))^{-\frac{\alpha}{\beta}+1}\sqrt{1+|\bar{\nabla} \varphi|^2}
			 \cdot
			 \frac{1}{\beta} \sigma_{k}^{\frac{1}{\beta}-1}
			\frac{1}{(\sinh(-\varphi))^3} \frac{1}{\sqrt{1+| \bar{\nabla}\varphi |^2}} \dot{\sigma_{k}}^{pq}(\varphi_{p}\varphi_{q}+\delta_{pq})|\bar{\nabla}\varphi|^2 \\
			\nonumber
			&+(\sinh(-\varphi))^{-\frac{\alpha}{\beta}+1}\sqrt{1+|\bar{\nabla} \varphi|^2}
			\cdot
			\frac{1}{\beta} \sigma_{k}^{\frac{1}{\beta}-1}
			\frac{1}{\sinh(-\varphi)}
			\frac{1}{\sqrt{1+| \bar{\nabla}\varphi |^2}}
			\dot{\sigma_{k}}^{pq}\\
			\nonumber
			&\times \left[
			\left(\frac{1}{2}|\bar{\nabla} \varphi|^{2}\right)_{p q}-\varphi_{i p} \varphi_{i q}+\varphi_{p} \varphi_{q}-\delta_{p q}|\bar{\nabla} \varphi|^{2}
			\right]\\
			=
			\nonumber
			&\frac{1}{\beta}(\sinh(-\varphi))^{-\frac{\alpha}{\beta}} \sigma_{k}^{\frac{1}{\beta}-1}\dot{\sigma_{k}}^{pq}\left(\frac{1}{2}|
			\bar{\nabla} \varphi|^{2}\right)_{p q} -
			\frac{1}{\beta}(\sinh(-\varphi))^{-\frac{\alpha}{\beta}-2}
			\sigma_{k}^{\frac{1}{\beta}-1} \dot{\sigma_{k}}^{pq}\varphi_{p}\varphi_{q}|\bar{\nabla}\varphi|^2\\
			\nonumber
			&-
			\frac{1}{\beta}(\sinh(-\varphi))^{-\frac{\alpha}{\beta}-2}
			\sigma_{k}^{\frac{1}{\beta}-1} \dot{\sigma_{k}}^{pq}\delta_{pq}|\bar{\nabla}\varphi|^2\\
			\nonumber
			&-\frac{\alpha-k-\beta}{\beta}(\sinh(-\varphi))^{-\frac{\alpha}{\beta}}\cosh(-\varphi)\sigma_{k}^{\frac{1}{\beta}}\sqrt{1+|\bar{\nabla} \varphi|^2}|\bar{\nabla}\varphi|^2 \\
			\nonumber
			&-\frac{1}{\beta}(\sinh(-\varphi))^{-\frac{\alpha}{\beta}}
			\sigma_{k}^{\frac{1}{\beta}-1}
			\dot{\sigma_{k}}^{pq}\varphi_{i p}\varphi_{i q}
			- \frac{1}{\beta}(\sinh(-\varphi))^{-\frac{\alpha}{\beta}}
			\sigma_{k}^{\frac{1}{\beta}-1}
			\dot{\sigma_{k}}^{pq}
			\left( \delta_{p q}|\bar{\nabla} \varphi|^{2}-\varphi_{p} \varphi_{q} \right) \\
			\leq
			\label{p.nabla.varphi}
			&0,
		\end{align}
	where the last inequality is from $\left(|\bar{\nabla} \varphi|^{2}\right)_{p q} \leq 0$ at $\left(\theta_{t}, t\right)$, $(\dot{\sigma_{k}}^{p q} )\geq 0$ and $(\delta_{p q}|\bar{\nabla}  \varphi|^{2}-\varphi_{p} \varphi_{q} )\geq 0$.
	
	When $\alpha > k+\beta$, we get $\frac{d}{dt}\left(\frac{1}{2}|\bar{\nabla} \varphi|^{2}\right)_{\max}(t) \leq 0$, so $|\bar{\nabla} \varphi|$ is bounded from above by a constant. Since $ \bar{\nabla}_{i}r = \sinh(r)\cdot\bar{\nabla}_{i}\varphi $, we have $|\bar{\nabla} r| \leq C$ for some constant $C$.
\end{proof}
\begin{cor}\label{C1estimate2}
	Under the same assumptions in \lemref{C1estimate}, along the flow \eqref{flow}, the hypersurface $\mathcal{M}_{t}$ preserves star-shapedness and the support function u satisfies
	$$
	\frac{1}{C} \leq u \leq C
	$$
	for some constant $C>0$.
\end{cor}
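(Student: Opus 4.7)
The plan is to read off this corollary essentially as a direct consequence of the previous two lemmas, since the support function admits a clean expression in terms of the radial function and the new angular variable $\varphi$.

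First I would recall from the preliminaries that, for a radial graph with radial function $r$, one has the identities
\begin{equation*}
u = \langle V,\nu\rangle = \frac{\phi(r)}{\omega}, \qquad \omega = \sqrt{1+|\bar\nabla\varphi|^2}, \qquad \bar\nabla_i\varphi = \frac{\bar\nabla_i r}{\phi(r)},
\end{equation*}
where $\varphi$ is defined by $d\varphi/dr = 1/\phi(r)$. In particular, bounds on $u$ reduce to bounds on $\phi(r)$ and on $|\bar\nabla\varphi|$.

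Next I would invoke the two previous estimates under the hypothesis $\alpha > k+\beta$. By \lemref{C0estimate}, there is a constant $C_1>0$, depending only on $\alpha,k,\beta$ and $\mathcal{M}_0$, such that $1/C_1 \leq r(\cdot,t) \leq C_1$; since $\phi = \sinh$ is a smooth positive increasing function on $(0,\infty)$, this yields $1/C_2 \leq \phi(r) \leq C_2$ uniformly in $(x,t)$. By \lemref{C1estimate}, $|\bar\nabla r| \leq C_3$, and combining with the lower bound on $\phi(r)$ gives $|\bar\nabla\varphi| = |\bar\nabla r|/\phi(r) \leq C_4$, hence
\begin{equation*}
1 \leq \omega = \sqrt{1+|\bar\nabla\varphi|^2} \leq \sqrt{1+C_4^2}.
\end{equation*}

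Finally, plugging these into $u = \phi(r)/\omega$ gives the two-sided bound
\begin{equation*}
\frac{1}{C_2\sqrt{1+C_4^2}} \leq u(\cdot,t) \leq C_2,
\end{equation*}
which is the desired estimate on $u$. In particular $u>0$ uniformly, so by the definition of star-shapedness the hypersurface $\mathcal{M}_t$ remains star-shaped along the flow, and the short-time existence ensures the solution can be continued as a star-shaped graph as long as the previous estimates persist. There is no real obstacle here; the only point worth care is to check that the estimates on $r$ and $\bar\nabla r$ from \lemref{C0estimate} and \lemref{C1estimate} hold throughout the entire maximal existence interval $[0,T)$, which indeed they do since the derivations in those lemmas are pointwise-in-time maximum principle arguments.
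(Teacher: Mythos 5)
Your proof is correct and follows essentially the same route as the paper's: express $u=\phi(r)/\omega$ with $\omega=\sqrt{1+|\bar\nabla\varphi|^2}$, bound $\phi(r)$ via \lemref{C0estimate}, and bound $\omega$ via \lemref{C1estimate} together with $\omega\geq 1$. The only difference is that you spell out the intermediate step $|\bar\nabla\varphi|=|\bar\nabla r|/\phi(r)$ explicitly, which the paper leaves implicit.
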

\begin{proof}
	We know $ u=\frac{\phi}{\omega} = \frac{\phi(r)}{\sqrt{1+|\bar{\nabla} \varphi|^{2}}} $.
	Since $ r $ is bounded from both above and below, we only need to prove $ \omega = \sqrt{1+|\bar{\nabla} \varphi|^{2}} $ is bounded from both above and below. This follows from  $ \omega = \sqrt{1+|\bar{\nabla} \varphi|^{2}} \geq 1 $ and \lemref{C1estimate} immediately.
	\begin{comment}
	and
	$$ u_{\min} \geq \frac{\phi(r_{\min})}
	{\sqrt{1+\left(|\bar{\nabla} \varphi|^{2}\right)_{\max}\mid_{t=0}}}. $$
	Since $ |\bar{\nabla} \varphi|^{2} = \frac{1}{(\phi(r))^2}|\bar{\nabla} r|^{2} $,
	$ \left(|\bar{\nabla} \varphi|^{2}\right)_{\max}\mid_{t=0} \leq \frac{1}{(\phi(r_{\min}))^2} \left(|\bar{\nabla} r|^{2}\right)_{\max}\mid_{t=0}$,
	so
	\begin{equation}\label{u.min}
		u_{\min} \geq \frac{\left(\phi(r_{\min})\right)^2}{\sqrt{(\phi(r_{\min}))^2 + \left(\left(|\bar{\nabla} r|^{2}\right)_{\max}\mid_{t=0}\right)^2}},
	\end{equation}
	\end{comment}
\end{proof}

\section{Estimates for $ k $-curvature}\label{sec:4}
In the following sections, we will derive the $C^{2}$ estimates of flow \eqref{flow}. To simplify the statements of following lemmas, we always assume $\alpha > k+\beta$ and the initial hypersurface is smooth, closed, star-shaped and $k$-convex. Besides, we denote $ F = (\sigma_{k})^{\frac{1}{\beta}} $ and we define a parabolic operator
$$
\mathcal{L}=\partial_{t}-\frac{1}{\beta} \sigma_{k}^{\frac{1}{\beta}-1}(\sinh(r))^{\frac{\alpha}{\beta}} \dot{\sigma_{k}}^{i j} \nabla_{i} \nabla_{j}
=\partial_{t}-(\phi(r))^{\frac{\alpha}{\beta}} \dot{F}^{i j} \nabla_{i} \nabla_{j}.
$$

In this section, we prove that the $ k $-curvature is bounded from both above and below. We first calculate the evolution equations of $\Phi=\phi^{\alpha/\beta}F$ and the support function $u$.
\begin{lem}
Along the flow \eqref{flow}, we have
\begin{align}\label{partial.t.Phi}
\mathcal{L}\Phi=&\frac{\alpha-k}{\beta}\gamma\phi'(r)\Phi - \frac{\alpha}{\beta}\omega\frac{\phi'(r)}{\phi(r)}\Phi^2 +(\phi(r))^{\frac{\alpha}{\beta}}\frac{1}{\beta} \sigma_{k}^{\frac{1}{\beta}-1} \Phi \left(\sigma_1\sigma_{k}-(k+1)\sigma_{k+1}\right)\nonumber \\
		&+\frac{n-k+1}{\beta} \sigma_{k}^{\frac{1}{\beta}-1}(\phi(r))^{\frac{\alpha}{\beta}} (\gamma u-\Phi) \sigma_{k-1}.
\end{align}
and
\begin{align}\label{L.u}
		\mathcal{L}u
		=&\gamma\phi'(r)u - \frac{k+\beta}{\beta}\phi'(r) \Phi + \frac{\alpha}{\beta}\phi'(r)|\nabla r|^2\Phi +
		\frac{1}{\beta}\sigma_{k}^{\frac{1}{\beta}-1}
		(\phi(r))^{\frac{\alpha}{\beta}} \left(
		\sigma_1\sigma_{k}-(k+1)\sigma_{k+1}
		\right)u.
	\end{align}
\end{lem}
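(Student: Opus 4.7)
The plan is to derive both evolution equations by direct computation from the definitions $\Phi=\phi^{\alpha/\beta}\sigma_k^{1/\beta}$ and $u=\langle V,\nu\rangle$, using the time evolutions already established for $r$ (equation \eqref{hradial.eq}), for $h_i^j$ (Corollary \ref{evolve.quantities1}), and for $u$ itself (Lemma \ref{evolve.quantities}). The preceding identity \eqref{bigresult} is not actually needed: the Simons-type step has been used once and for all, and here we only require the three contractions in Lemma \ref{trace} / Corollary \ref{F.trace}.

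For \eqref{partial.t.Phi}, I would first expand $\partial_t\Phi=\frac{\alpha}{\beta}\phi^{\alpha/\beta-1}\phi'(\partial_t r)\sigma_k^{1/\beta}+\frac{1}{\beta}\phi^{\alpha/\beta}\sigma_k^{1/\beta-1}\partial_t\sigma_k$. Substituting $\partial_t r=-\Phi\omega+\gamma\phi$ yields the two terms $-\frac{\alpha}{\beta}\omega\frac{\phi'}{\phi}\Phi^2$ and $\frac{\alpha}{\beta}\gamma\phi'\Phi$. For $\partial_t\sigma_k$, I would pair $\partial\sigma_k/\partial h_i^j$ with each of the four terms on the right side of Corollary \ref{evolve.quantities1} for $\partial_t h_i^j$. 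Using Lemma \ref{trace} to evaluate $\dot\sigma_k^{ij}h_{ij}=k\sigma_k$, $\dot\sigma_k^{ij}(h^2)_{ij}=\sigma_1\sigma_k-(k+1)\sigma_{k+1}$, and $\dot\sigma_k^{ij}g_{ij}=(n-k+1)\sigma_{k-1}$ gives the four contributions, one of which is the second-order spatial term $\dot\sigma_k^{ij}\nabla_i\nabla_j\Phi$ with prefactor $\frac{1}{\beta}\phi^{\alpha/\beta}\sigma_k^{1/\beta-1}$. Since $\phi^{\alpha/\beta}\dot F^{ij}=\frac{1}{\beta}\phi^{\alpha/\beta}\sigma_k^{1/\beta-1}\dot\sigma_k^{ij}$, subtracting $\phi^{\alpha/\beta}\dot F^{ij}\nabla_i\nabla_j\Phi$ cancels that term exactly, and the identity $\frac{\alpha}{\beta}\gamma\phi'\Phi-\frac{k}{\beta}\gamma\phi'\Phi=\frac{\alpha-k}{\beta}\gamma\phi'\Phi$ assembles the first term of \eqref{partial.t.Phi}.

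For \eqref{L.u}, the evolution $\partial_t u=-\phi'\Phi+\phi'\gamma u+\langle V,\nabla\Phi\rangle$ is already in hand, so the real work is $\dot F^{ij}\nabla_i\nabla_j u$. Differentiating $u=\langle V,\nu\rangle$ and using the Weingarten formula together with $D_XV=\phi'X$ (Lemma \ref{propertyV}) gives $\nabla_i u=h_i^l V_l$ with $V_l:=\langle V,X_l\rangle$, and then $\nabla_j\nabla_i u=(\nabla_j h_i^l)V_l+\phi' h_{ij}-u(h^2)_{ij}$. Contracting with $\dot F^{ij}$ and applying Codazzi converts $\dot F^{ij}(\nabla_j h_i^l)V_l$ into $V^m\nabla_m F=\langle V^T,\nabla F\rangle$. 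Corollary \ref{s2.cor1} gives $\nabla r=V^T/\phi$, hence $\langle V,\nabla r\rangle=\phi|\nabla r|^2$, and from $\nabla\Phi=\frac{\alpha}{\beta}\frac{\phi'}{\phi}\Phi\nabla r+\phi^{\alpha/\beta}\nabla F$ one gets $\phi^{\alpha/\beta}\langle V,\nabla F\rangle=\langle V,\nabla\Phi\rangle-\frac{\alpha}{\beta}\phi'\Phi|\nabla r|^2$. The two remaining pieces $\phi^{\alpha/\beta}\dot F^{ij}\phi' h_{ij}$ and $-u\phi^{\alpha/\beta}\dot F^{ij}(h^2)_{ij}$ are evaluated by Corollary \ref{F.trace} as $\frac{k}{\beta}\phi'\Phi$ and $-\frac{u}{\beta}\sigma_k^{1/\beta-1}\phi^{\alpha/\beta}(\sigma_1\sigma_k-(k+1)\sigma_{k+1})$. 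Combining and canceling the $\langle V,\nabla\Phi\rangle$ contributions produces \eqref{L.u} with the coefficient $1+\frac{k}{\beta}=\frac{k+\beta}{\beta}$.

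The main obstacle is purely bookkeeping: one must be careful that the paper's conventions $\dot\sigma_k^{ij}:=g^{jl}\partial\sigma_k/\partial h_i^l$ and $\dot F^{ij}:=g^{jl}\partial F/\partial h_i^l$ are paired with the evolution of the \emph{mixed} tensor $h_i^j$ rather than $h_{ij}$, and that the $g$-trace in Corollary \ref{evolve.quantities1}'s $(\gamma u-\Phi)\delta_i^j$ term yields the $\sigma_{k-1}$ factor while the $h^2$ term yields $\sigma_1\sigma_k-(k+1)\sigma_{k+1}$. No additional geometric input beyond Codazzi, $D_XV=\phi'X$, and $\nabla r=V^T/\phi$ is required.
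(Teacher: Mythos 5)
Your proposal follows essentially the same route as the paper's proof: both compute $\partial_t\Phi$ by the chain rule, substitute $\partial_t r=-\Phi\omega+\gamma\phi$ and Corollary~\ref{evolve.quantities1} for $\partial_t h_i^j$, evaluate the resulting contractions with Corollary~\ref{F.trace}, and cancel the second-order term against $\phi^{\alpha/\beta}\dot F^{ij}\nabla_i\nabla_j\Phi$; and both handle $\mathcal{L}u$ via $\nabla_i u=h_i^k\langle V,X_k\rangle$, $\nabla_i\nabla_j u=\nabla^k h_{ij}\langle V,X_k\rangle+\phi' h_{ij}-(h^2)_{ij}u$, and the cancellation of $\langle V,\nabla\Phi\rangle$ against the Codazzi term in the Laplacian. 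The only cosmetic difference is that you derive the Hessian formula for $u$ in place rather than citing \cite{Guan-2014} as the paper does, and you are right that \eqref{bigresult} is not used here — the paper's proof of this lemma does not invoke it either.
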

\proof
First, we calculate the evolution equation of $ \Phi $. From \corref{evolve.quantities1},
	\begin{align*}
		\p_{t} \Phi =
		& \p_{t}\left( (\phi(r))^{\frac{\alpha}{\beta}} \sigma_{k}^{\frac{1}{\beta}} \right)  \\=
		& \frac{\alpha}{\beta} (\phi(r))^{\frac{\alpha}{\beta}-1} \phi'(r) (\p_{t}r) \sigma_{k}^{\frac{1}{\beta}} +
		(\phi(r))^{\frac{\alpha}{\beta}} \frac{\p F}{\p h_{i}^{j}}
		\p_{t} h_{i}^{j} \\=
		& \frac{\alpha}{\beta} (\phi(r))^{\frac{\alpha}{\beta}-1} \phi'(r) (-\Phi\omega + \gamma \phi(r))\sigma_{k}^{\frac{1}{\beta}}  \\
		&+(\phi(r))^{\frac{\alpha}{\beta}}
		\frac{\p F}{\p h_{i}^{j}}
		\left( \nabla_{i}\nabla^{j}\Phi+\Phi h_{i}^{k} h_{k}^{j} -
		\gamma\phi'h_{i}^{j} + (\gamma u -\Phi)\delta_{i}^{j} \right)\\=
		&\frac{\alpha}{\beta} \Phi \frac{\phi'(r)}{\phi(r)}(-\Phi \omega + \gamma \phi(r)) \\
		&+(\phi(r))^{\frac{\alpha}{\beta}}
		\dot{F}^{ij} \left( \nabla_{i}\nabla_{j}\Phi+\Phi (h^2)_{ij} - \gamma\phi'h_{ij} + (\gamma u -\Phi)g_{ij} \right).
	\end{align*}
	By using \corref{F.trace}, we have
	\begin{align*}
		\nonumber
		\p_{t} \Phi
		=
		\nonumber
		&\frac{\alpha}{\beta}\gamma\phi'(r)\Phi - \frac{\alpha}{\beta}\omega\frac{\phi'(r)}{\phi(r)}\Phi^2 +
		(\phi(r))^{\frac{\alpha}{\beta}}\dot{F}^{ij}\nabla_{i}\nabla_{j}\Phi\\
		\nonumber
		&+\frac{1}{\beta} \sigma_{k}^{\frac{1}{\beta}-1}
		(\phi(r))^{\frac{\alpha}{\beta}} \Phi \left(\sigma_1\sigma_{k}-(k+1)\sigma_{k+1}\right)
		-
		\frac{k\gamma}{\beta} \sigma_{k}^{\frac{1}{\beta}} (\phi(r))^{\frac{\alpha}{\beta}}\phi'(r)\\
		\nonumber
		&+
		\frac{n-k+1}{\beta} \sigma_{k}^{\frac{1}{\beta}-1}(\phi(r))^{\frac{\alpha}{\beta}} (\gamma u-\Phi) \sigma_{k-1}\\
		%=
%		\nonumber
%		&(\phi(r))^{\frac{\alpha}{\beta}}\dot{F}^{ij}\nabla_{i}\nabla_{j}\Phi + \frac{\alpha}{\beta}\gamma\phi'(r)\Phi - \frac{\alpha}{\beta}\omega\frac{\phi'(r)}{\phi(r)}\Phi^2 \\
%		\nonumber
%		&+\frac{1}{\beta} \sigma_{k}^{\frac{1}{\beta}-1}
%		(\phi(r))^{\frac{\alpha}{\beta}} \Phi \left(\sigma_1\sigma_{k}-(k+1)\sigma_{k+1}\right) -
%		\frac{k\gamma}{\beta} \phi'(r) \Phi\\
%		\nonumber
%		&+
%		\frac{n-k+1}{\beta}
%		\sigma_{k}^{\frac{1}{\beta}-1}(\phi(r))^{\frac{\alpha}{\beta}} (\gamma u-\Phi) \sigma_{k-1}\\
		=
		\nonumber
		&(\phi(r))^{\frac{\alpha}{\beta}}\dot{F}^{ij}
		\nabla_{i}\nabla_{j}\Phi +
		\frac{\alpha-k}{\beta}\gamma\phi'(r)\Phi - \frac{\alpha}{\beta}\omega\frac{\phi'(r)}{\phi(r)}\Phi^2 \\
		\nonumber
		&+(\phi(r))^{\frac{\alpha}{\beta}}\frac{1}{\beta} \sigma_{k}^{\frac{1}{\beta}-1} \Phi \left(\sigma_1\sigma_{k}-(k+1)\sigma_{k+1}\right) \\
		&+\frac{n-k+1}{\beta} \sigma_{k}^{\frac{1}{\beta}-1}(\phi(r))^{\frac{\alpha}{\beta}} (\gamma u-\Phi) \sigma_{k-1},
	\end{align*}
	which implies \eqref{partial.t.Phi}.

	Next we calculate the evolution of $u$. Note that the covariant derivatives of $u$ satisfy (see Lemma 2.6 of \cite{Guan-2014})
	$$
	\nabla_{i} u
	=h_{i}^{k}\left\langle V, X_{k}\right\rangle
	$$
and
	\begin{equation}\label{temp11}
		\begin{aligned}
			\nabla_{i} \nabla_{j} u &=\nabla^{k} h_{i j}\left\langle V, X_{k}\right\rangle+\phi^{\prime} h_{i j}-h_{i k} h_{j}^{k} u .
		\end{aligned}
	\end{equation}
	By \eqref{evolve.quantities}, we have
	\begin{equation}\label{temp12}
		\frac{\partial}{\partial t} u
				= \phi'(r)(\gamma u-\Phi) + \langle V,\nabla\Phi\rangle .
	\end{equation}
	Also we have
	\begin{align*}
		\nabla_{i}\Phi
		&= \nabla_{i}\left( (\phi(r))^{\frac{\alpha}{\beta}} \sigma_{k}^{\frac{1}{\beta}} \right)\\
		&= \frac{\alpha}{\beta}(\phi(r))^{\frac{\alpha}{\beta}-1}\phi'(r)(\nabla_{i}r)
		\sigma_{k}^{\frac{1}{\beta}} +
		(\phi(r))^{\frac{\alpha}{\beta}} \nabla_{i}\left(\sigma_{k}^{\frac{1}{\beta}}\right)\\
		&=\frac{\alpha}{\beta}\frac{\phi'(r)}{\phi(r)}\Phi\nabla_{i}r +
		(\phi(r))^{\frac{\alpha}{\beta}} F^{pq}\nabla_{i}h_{pq}.
	\end{align*}
	By using $ \nabla_{j}r = \frac{\left\langle X_j,V \right\rangle}{\phi(r)} $, we have
	\begin{align}
		\langle V,\nabla\Phi\rangle
		\nonumber
		&=g^{ij}(\nabla_{i}\Phi)\langle V,X_j\rangle\\
		\nonumber
		&=\frac{\alpha}{\beta}\phi'(r)|\nabla r|^2\Phi +
		(\phi(r))^{\frac{\alpha}{\beta}} g^{ij} F^{pq}\nabla_{i}h_{pq}\langle V,X_j\rangle \\
		\label{temp13}
		&=\frac{\alpha}{\beta}\phi'(r)|\nabla r|^2\Phi +
		(\phi(r))^{\frac{\alpha}{\beta}} F^{pq}\nabla^{k}h_{pq}\langle V,X_k\rangle.
	\end{align}
	Combining \eqref{temp11}, \eqref{temp12} and \eqref{temp13}, and by using \corref{F.trace}, we get
	\begin{align*}
		\mathcal{L}u
		\nonumber
		=&\p_{t} u - (\phi(r))^{\frac{\alpha}{\beta}} \dot{F}^{pq}\nabla_p\nabla_q u \\
		\nonumber
		=&\phi'(r)(\gamma u-\Phi) + \frac{\alpha}{\beta}\phi'(r)|\nabla r|^2\Phi -
		\phi'(r)(\phi(r))^{\frac{\alpha}{\beta}} \dot{F}^{pq}h_{pq} +
		(\phi(r))^{\frac{\alpha}{\beta}} \dot{F}^{pq}(h^2)_{pq}u \\
		\nonumber
		=&\phi'(r)(\gamma u-\Phi) - \frac k{\beta}\phi'(r) \Phi + \frac{\alpha}{\beta}\phi'(r)|\nabla r|^2\Phi +
		\frac{1}{\beta}\sigma_{k}^{\frac{1}{\beta}-1}
		(\phi(r))^{\frac{\alpha}{\beta}} \left(
		\sigma_1\sigma_{k}-(k+1)\sigma_{k+1}
		\right)u \\
		=&\gamma\phi'(r)u - \frac{k+\beta}{\beta}\phi'(r) \Phi + \frac{\alpha}{\beta}\phi'(r)|\nabla r|^2\Phi +
		\frac{1}{\beta}\sigma_{k}^{\frac{1}{\beta}-1}
		(\phi(r))^{\frac{\alpha}{\beta}} \left(
		\sigma_1\sigma_{k}-(k+1)\sigma_{k+1}
		\right)u.
	\end{align*}
\endproof

\begin{prop}\label{C2estimate1}
	Along the flow \eqref{flow}, if the initial hypersurface $ \MM_{0} $ is smooth, closed and $ k $-convex, then there exists a constant $ C > 0 $ such that
	$$ \sigma_{k} \geq C $$
on $\MM_t$ for $t\in [0,T)$.
\end{prop}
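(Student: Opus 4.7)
The plan is to apply the parabolic minimum principle directly to $\Phi=(\phi(r))^{\alpha/\beta}\sigma_k^{1/\beta}$, using the evolution identity \eqref{partial.t.Phi} already derived. Since $\phi(r)=\sinh r$ is uniformly pinched between two positive constants by \lemref{C0estimate}, a uniform positive lower bound for $\Phi$ is equivalent to the claimed lower bound on $\sigma_k$.

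At any spatial minimum point of $\Phi(\cdot,t)$ one has $\nabla\Phi=0$ and $\nabla^2\Phi\geq 0$ as a matrix. Since $\MM_t$ is $k$-convex, $(\dot F^{ij})$ is positive definite by \lemref{dsigma}, hence $\dot F^{ij}\nabla_i\nabla_j\Phi\geq 0$ at the minimum, and consequently $\partial_t\Phi\geq \mathcal L\Phi$ there. I would then check the sign of each of the four terms on the right-hand side of \eqref{partial.t.Phi}. The first term $\frac{\alpha-k}{\beta}\gamma\phi'\Phi$ is strictly positive since $\alpha>k+\beta>k$ and $\phi'=\cosh r>0$. The third term is handled via the algebraic identity
\begin{equation*}
\sigma_1(\kappa)\sigma_k(\kappa)-(k+1)\sigma_{k+1}(\kappa)=\sum_{j=1}^{n}\kappa_j^{2}\,\sigma_{k-1}(\kappa\mid j),
\end{equation*}
which is non-negative on $\Gamma_k^{+}$ because $\sigma_{k-1}(\kappa\mid j)>0$ for every $j$ in that cone. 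The fourth term has the sign of $\gamma u-\Phi$, which is non-negative as soon as $\Phi\leq \gamma u$; in view of the uniform lower bound on $u$ from \corref{C1estimate2}, this is automatic once $\Phi$ is small enough. The only negative contribution is the quadratic term $-\frac{\alpha}{\beta}\omega\frac{\phi'}{\phi}\Phi^2$.

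Combining these observations with the uniform two-sided bounds on $\phi,\phi',\omega,u$ from \lemref{C0estimate}, \lemref{C1estimate} and \corref{C1estimate2}, I would choose a constant $\epsilon_0>0$ depending only on $\MM_0,\alpha,\beta,k$ such that whenever $\Phi(x,t)\leq \epsilon_0$ at a spatial minimum,
\begin{equation*}
\partial_t\Phi \;\geq\; \mathcal L\Phi \;\geq\; \frac{\alpha-k}{\beta}\gamma\phi'\Phi-\frac{\alpha}{\beta}\omega\frac{\phi'}{\phi}\Phi^{2} \;\geq\; \frac{\alpha-k}{2\beta}\gamma\phi'\Phi \;>\; 0.
\end{equation*}
A standard Dini-derivative (Hamilton-type) argument for $\Phi_{\min}(t):=\min_{x}\Phi(x,t)$ then forces $t\mapsto \min\{\Phi_{\min}(t),\epsilon_0\}$ to be monotone non-decreasing, and therefore $\Phi_{\min}(t)\geq \min\{\Phi_{\min}(0),\epsilon_0\}>0$ uniformly on $[0,T)$, which yields $\sigma_k\geq C$.

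The main obstacle is handling Term 3 under the assumption of only $k$-convexity rather than $(k+1)$-convexity or uniform convexity; this is precisely what the identity above is for, and it is the essential use of $\kappa\in\Gamma_k^{+}$. Everything else is a direct consequence of the parabolic maximum principle together with the $C^{0}$ and $C^{1}$ a priori estimates established in Section \ref{sec:3}.
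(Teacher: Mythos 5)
Your argument is correct and is essentially the paper's proof: a minimum-principle / trap-region argument applied to $\Phi$ using \eqref{partial.t.Phi}, with the dominant positive linear term $\frac{\alpha-k}{\beta}\gamma\phi'\Phi$ beating the quadratic term once $\Phi$ is small. The only cosmetic difference is in establishing $\sigma_1\sigma_k-(k+1)\sigma_{k+1}\geq 0$ on $\Gamma_k^+$: you invoke the identity $\dot\sigma_k^{ij}(h^2)_{ij}=\sum_j\kappa_j^2\sigma_{k-1}(\kappa\,|\,j)$ together with positive-definiteness of $(\dot\sigma_k^{ij})$, while the paper cites Newton--Maclaurin (\lemref{Maclaurin}) to get the stronger quantitative bound $\geq C_1\sigma_k^{1+1/k}$, which it doesn't actually need here.
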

\begin{proof}
	By \lemref{Maclaurin}, we know that
	$ \sigma_1\sigma_{k}-(k+1)\sigma_{k+1} \geq C_1(\sigma_{k})^{1+\frac{1}{k}} > 0 $ for some constant $ C_1=C_1(n,k) >0$, and $ \sigma_{k-1} \geq C_2(\sigma_{k})^{\frac{k-1}{k}} >0 $ for some constant $ C_2=C_2(n,k) >0 $.
	
	We apply maximum principle to the evolution equation \eqref{partial.t.Phi} of $\Phi$. At the spatial minimum point of $ \Phi $ on $ \mathcal{M}_t $, we have $ (\nabla_{i}\nabla_{j}\Phi )\geq 0 $,
	hence $ \dot{\sigma_{k}}^{ij}
	\nabla_{i}\nabla_{j}\Phi \geq 0 $,
	then $ \dot{F}^{ij}\nabla_{i}\nabla_{j}\Phi \geq 0 $.
	If $ \Phi $ is small enough such that $ \Phi \leq \gamma \min_{\mathbb{S}^{n}\times[0,T)}u $, we have
	$$
	\p_{t}\Phi
	\geq \frac{\alpha-k}{\beta}\gamma\phi'(r)\Phi - \frac{\alpha}{\beta}\omega\frac{\phi'(r)}{\phi(r)}\Phi^2
	=\frac{\alpha-k}{\beta}\gamma\phi'(r)\Phi -
	\frac{\alpha}{\beta}\frac{\phi'(r)}{u}\Phi^2
	$$
at the spatial minimum point of $ \Phi $.	By \lemref{C0estimate} and \lemref{C1estimate} ,let
	$$ C_1 = \frac{\alpha}{(\alpha-k)\gamma u_{\min}}. $$
	we have
	$$
	\p_{t}\Phi \geq \frac{\alpha-k}{\beta}\gamma\phi'(r) \left(\Phi - C_1\Phi^2\right).
	$$
	
	Therefore, if $ 0\leq\Phi_{\min}(t) \leq \min\left\{ \frac{1}{C_1}, \gamma \min_{\mathbb{S}^{n}\times[0,T)}u \right\} $, then
	$ \frac{d}{dt} \Phi_{\min}(t) \geq 0 $.
	In all, we conclude that
	$ \Phi $ is bounded from below by a positive constant.
	Since $ \sigma_{k}^{\frac{1}{\beta}} = \frac{1}{(\phi(r))^{\frac{\alpha}{\beta}}}\Phi $, we know that $ \sigma_{k} $ is bounded from below by a positive constant $ C $.
\end{proof}
\begin{prop}\label{C2estimate2}
	Along the flow \eqref{flow}, if the initial hypersurface $ \MM_{0} $ is smooth, closed and $ k $-convex,  then there exists a constant $C>0$, such that
	$$
	\sigma_{k} \leq C
	$$
	on $\MM_t$ for $t\in [0,T)$.
\end{prop}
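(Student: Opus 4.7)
Since $\sigma_{k}^{1/\beta} = \Phi/(\phi(r))^{\alpha/\beta}$ and $r$ is uniformly bounded by Lemma \ref{C0estimate}, it suffices to derive a uniform upper bound for $\Phi$. My plan is to apply the parabolic maximum principle to the auxiliary function
\[
W = \frac{\Phi}{u - a},
\]
where $a$ is a positive constant chosen so that $0 < a < \inf_{\mathbb{S}^{n} \times [0,T)} u$; such $a$ exists by Corollary \ref{C1estimate2}.

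At any spatial maximum point $(x_{0},t_{0})$ of $W$ on $\mathcal{M}_{t}$ one has $\nabla\Phi = W\nabla u$ and $\nabla^{2}W \leq 0$, so a direct computation gives $(u-a)\mathcal{L}W = \mathcal{L}\Phi - W\mathcal{L}u$ at that point. I will then substitute the evolution equations \eqref{partial.t.Phi} and \eqref{L.u}. The crucial observation is that the non-negative but a priori unbounded quantity $Q := \sigma_{1}\sigma_{k} - (k+1)\sigma_{k+1}$ appears in $\mathcal{L}\Phi$ multiplied by $\Phi$ and in $\mathcal{L}u$ multiplied by $u$; since $\Phi = W(u-a)$ at the maximum, the two contributions combine into
\[
-\frac{aW}{\beta}(\phi(r))^{\alpha/\beta}\sigma_{k}^{1/\beta - 1} Q \leq 0,
\]
which is the essential reason for introducing the shift $a>0$.

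After absorbing the $Q$-terms into this favourable contribution, the dominant piece that remains is the quadratic-in-$\Phi$ term $-\frac{\alpha}{\beta}\omega\frac{\phi'(r)}{\phi(r)}\Phi^{2} = -\frac{\alpha}{\beta}\frac{\phi'(r)}{u}\Phi^{2}$, using $\omega = \phi/u$. Rewriting via $\Phi = W(u-a)$, the coefficient of $W^{2}$ in $(u-a)\mathcal{L}W$ will be
\[
\phi'(r)(u-a)\left\{\frac{k+\beta}{\beta} - \frac{\alpha}{\beta}\cdot\frac{u-a}{u} - \frac{\alpha}{\beta}|\bar{\nabla}r|^{2}\right\},
\]
modulo terms of strictly lower order in $W$. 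Choosing $a$ small enough that $(u-a)/u$ stays uniformly close to $1$ (possible since $u$ is bounded below by Corollary \ref{C1estimate2}), the hypothesis $\alpha > k+\beta$ forces the bracket to be strictly negative on $\mathbb{S}^{n}\times[0,T)$. The residual terms are linear in $W$ or constant, and the $\sigma_{k-1}$-contribution $\frac{n-k+1}{\beta}(\phi(r))^{\alpha/\beta}\sigma_{k}^{1/\beta - 1}(\gamma u - \Phi)\sigma_{k-1}$ turns additionally negative once $\Phi > \gamma u_{\max}$, so none of these can compete with the quadratic piece for large $W$.

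Putting this together, if $W(x_{0},t_{0})$ were larger than some explicit threshold depending only on $n$, $k$, $\alpha$, $\beta$, $a$ and the $C^{0}$--$C^{1}$ bounds for $r$, I would obtain $\mathcal{L}W < 0$ at the maximum, contradicting $\mathcal{L}W \geq 0$. This yields $W \leq C$, hence $\Phi \leq C(u_{\max}-a)$, and finally $\sigma_{k} \leq C$. The main difficulty is precisely the term $Q = \sigma_{1}\sigma_{k} - (k+1)\sigma_{k+1}$: on a merely $k$-convex hypersurface $\sigma_{1}$ is not controlled by $\sigma_{k}$, which prevents a direct maximum-principle argument on $\Phi$ itself. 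The shift $a>0$ in the denominator of $W$ is exactly the device that converts this obstacle into a non-positive term through the cancellation between $\mathcal{L}\Phi$ and $W\mathcal{L}u$.
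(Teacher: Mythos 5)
Your proposal is correct and its central bookkeeping checks out, but it is not quite the same route as the paper. The paper also applies Tso's technique with a shift $a$, but it works with the \emph{logarithmic} quantity $Q_{\mathrm{paper}}=\log\Phi-\log(u-a)$ (fixing $a=\tfrac12\inf u$) rather than the ratio $W=\Phi/(u-a)$ with $a$ chosen small. More importantly, the paper does not discard the term involving $\sigma_1\sigma_k-(k+1)\sigma_{k+1}$ the way you do: since for the paper's fixed $a$ the linear-in-$\Phi$ coefficient in $\frac{\mathcal{L}\Phi}{\Phi}-\frac{\mathcal{L}u}{u-a}$ need not be negative when $\alpha\leq 2(k+\beta)$, the paper keeps the $-\tfrac{a}{u-a}\tfrac1\beta\sigma_k^{1/\beta-1}\phi^{\alpha/\beta}\bigl(\sigma_1\sigma_k-(k+1)\sigma_{k+1}\bigr)$ contribution and lower-bounds it through the Newton--Maclaurin inequality (\lemref{Maclaurin}), producing the dominant negative term $-C_1\Phi^{1+\beta/k}$. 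Your approach instead exploits the exact cancellation in $(u-a)\mathcal{L}W=\mathcal{L}\Phi-W\mathcal{L}u$ to reduce the same contribution to $-\tfrac{aW}{\beta}\phi^{\alpha/\beta}\sigma_k^{1/\beta-1}\bigl(\sigma_1\sigma_k-(k+1)\sigma_{k+1}\bigr)\leq 0$, discards it, and then lets the genuinely quadratic-in-$W$ term carry the argument, with the coefficient
\[
\phi'(r)(u-a)\Bigl(\tfrac{k+\beta}{\beta}-\tfrac{\alpha}{\beta}\tfrac{u-a}{u}-\tfrac{\alpha}{\beta}|\nabla r|^{2}\Bigr)
\]
being strictly negative once $a<\tfrac{\alpha-k-\beta}{\alpha}\inf u$. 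This is a cleaner use of the hypothesis $\alpha>k+\beta$: you need only the crude fact $\sigma_1\sigma_k-(k+1)\sigma_{k+1}\geq 0$ on $\Gamma_k^+$, not the sharper Newton--Maclaurin comparison, at the cost of having to choose $a$ more carefully than the paper's fixed $a=\tfrac12\inf u$. Both arguments give the same conclusion; yours is somewhat more elementary, while the paper's gives the stronger superlinear decay $-C_1\Phi^{1+\beta/k}$ as a byproduct.

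One cosmetic remark: the quantity appearing in the evolution equation \eqref{L.u} is $|\nabla r|^{2}$, the tangential gradient on $\mathcal{M}_t$, not $|\bar\nabla r|^{2}$; this does not affect the sign considerations since both are nonnegative and bounded, but the notation should be kept consistent with \eqref{L.u}.
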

\begin{proof}
	To prove the upper bound of $\sigma_k$, we apply the technique of Tso \cite{Chou85}. We consider the auxiliary function $Q=\log \Phi-\log (u-a)$, where $a=\frac{1}{2} \inf _{\SSS^n \times[0, T)} u$. Using the equations \eqref{partial.t.Phi} and \eqref{L.u}, at the maximum point of $ Q $ on $ \mathcal{M}_{t} $, we have
	\begin{align*}
		&\mathcal{L}Q
		=\frac{\mathcal{L}\Phi}{\Phi} - \frac{\mathcal{L}u}{u-a}\\
		=&\frac{\alpha-k}{\beta}\gamma\phi'(r) - \frac{\alpha}{\beta}\omega\frac{\phi'(r)}{\phi(r)}\Phi +
		(\phi(r))^{\frac{\alpha}{\beta}}
		\frac{1}{\beta} \sigma_{k}^{\frac{1}{\beta}-1} \left(\sigma_1\sigma_{k}-(k+1)\sigma_{k+1}\right)\\
		&+
		\frac{n-k+1}{\beta} \sigma_{k}^{\frac{1}{\beta}-1}(\phi(r))^{\frac{\alpha}{\beta}} \frac{(\gamma u-\Phi)}{\Phi} \sigma_{k-1}
		-\gamma\phi'(r)\frac{u}{u-a} +
		\frac{(k+\beta)\phi'(r)}{\beta(u-a)} \Phi \\
		&-
		\frac{\alpha}{\beta}\frac{\phi'(r)|\nabla r|^2}{u-a}\Phi -
		(\phi(r))^{\frac{\alpha}{\beta}}\frac{1}{\beta} \sigma_{k}^{\frac{1}{\beta}-1} \left(
		\sigma_1\sigma_{k}-(k+1)\sigma_{k+1}
		\right)\frac{u}{u-a} \\
		\leq & -\frac{a}{u-a}
		\frac{1}{\beta} \sigma_{k}^{\frac{1}{\beta}-1}
		(\phi(r))^{\frac{\alpha}{\beta}}  \left(\sigma_1\sigma_{k}-(k+1)\sigma_{k+1}\right) + C\Phi +C \\
		=&-\frac{a}{\beta(u-a)}
		\frac{\sigma_1\sigma_{k}-(k+1)\sigma_{k+1}}{\sigma_k}\Phi + C\Phi +C
	\end{align*}
	for some $ C > 0 $ by \lemref{C0estimate} and \lemref{C1estimate} and \corref{C1estimate2}, upon assuming that
	$ \Phi $ is large enough such that $ \Phi\geq \gamma \max_{\mathbb{S}^{n}\times[0,T)}u $.
	%
%	Without loss of generality, we assume that $\kappa_1 \geq \kappa_2 \cdots \geq \kappa_{n}$, where $ \kappa_{i} $ are principal curvatures. By direct calculation,
%	\begin{align}
%		\nonumber
%		&\sigma_1\sigma_{k}-(k+1)\sigma_{k+1}
%		=\tr(D\sigma_{k}\cdot A^2)
%		=\sum_{i}\sigma_{k-1}(\kappa|i)(\kappa_{i})^2 \\
%		\geq
%		\label{H.sigma.k}
%		&\sigma_{k-1}(\kappa|1)(\kappa_{1})^2
%		\geq \sigma_{k-1}(\kappa|1)\kappa_{1}\cdot\frac{1}{n}H
%		\geq \frac{1}{n}\sum_{i}\sigma_{k-1}(\kappa|i)\kappa_{i}
%		\cdot \frac{1}{n}H
%		=\frac{1}{n^2}\sigma_{k}H.
%	\end{align}
	By \lemref{Maclaurin},
	we can estimate
	$$ \frac{\sigma_1\sigma_{k}-(k+1)\sigma_{k+1}}{\sigma_k} \geq \frac{k}{ \tbinom{n}{k}^{\frac{1}{k}}}(\sigma_{k})^{\frac{1}{k}}.  $$
	
	Thus
	\begin{equation}
		\mathcal{L}Q \leq -C_1\Phi^{1+\frac{\beta}{k}} + C\Phi+C.
	\end{equation}
	for some constant $ C,C_1>0 $. Thus, there exists a constant $C>0$ that only depend on $\mathcal{M}_{0}$, so that whenever $\Phi>C$, we have $\frac{d}{d t} Q_{\max }(t)<0 .$ From \lemref{C0estimate} and \lemref{C1estimate}, $r$ and $ u $ is bounded. Hence $\Phi$ goes to infinity when $Q$ goes to infinity. Therefore $Q$ is bounded from above by a constant, which gives an upper bound of $\Phi$.
\end{proof}

\section{$C^2$ estimate for $k=1$ and mean convex solutions}\label{sec:5}
In this section, we consider the case $k=1$ of the flow \eqref{flow}. Since $0<\beta\leq 1$, the flow is fully nonlinear except $\beta=1$, we still need to get $C^2$ estimate. We will prove the boundness of principal curvatures when the initial hypersurface is mean convex. We first calculate the evolution equation of $|A|^2$, where $ |A| $ denotes the tensor $ \left(h_{i}^{j}\right) $ and $ |A|^2 = h_{i}^{j} h_{j}^{i} .$
\begin{lem}
Along the flow \eqref{flow},
\begin{align}
		\nonumber
		\mathcal{L}\left( |A|^2  \right)
		=
		\nonumber
		&- 2\frac{1}{\beta} \phi^{\frac{\alpha}{\beta}} H^{\frac{1}{\beta}-1}|\nabla A|^2 +
		2(\phi(r))^{\frac{\alpha}{\beta}} (1-\beta) H^{-\frac{1}{\beta}} h^{ij}\nabla_{i} F \nabla_{j} F\\
		\nonumber
		& +
		2 \frac{\alpha}{\beta}\left(\frac{\alpha}{\beta}-1\right)\frac{(\phi'(r))^2}{(\phi(r))^2}\Phi
		h^{ij}(\nabla_{i}r)(\nabla_{j}r)
		-
		2 \frac{\alpha}{\beta}\frac{1}{(\phi(r))^2}\Phi h^{ij}(\nabla_{i}r)(\nabla_{j}r)
		\\
		\nonumber
		&-
		2 \frac{\alpha}{\beta}\frac{\phi'(r)}{(\phi(r))^2}u\Phi |A|^2
		+ 2 \frac{\alpha}{\beta}\frac{(\phi'(r))^2}{(\phi(r))^2}\Phi H
		+
		4\frac{\alpha}{\beta}\phi'(r) (\phi(r))^{\frac{\alpha}{\beta}-1}h^{ij}
		(\nabla_{i}r)(\nabla_{j}F)\\
		&+
		\frac{2}{\beta}H^{\frac{1}{\beta}-1}(\phi(r))^{\frac{\alpha}{\beta}}|A|^4
		-\frac{2}{\beta}\Phi H + \frac{2n}{\beta}H^{\frac{1}{\beta}-1}(\phi(r))^{\frac{\alpha}{\beta}}|A|^2
		\label{L.A2}\\
&		-
		4 \gamma \phi'(r) |A|^2 +
		2 (\gamma u - \Phi) H+
		2 (\gamma u - \Phi) H+2(1-\frac 1{\beta})\Phi h_i^lh_l^jh_j^i.\nonumber
	\end{align}
\end{lem}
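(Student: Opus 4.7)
The plan is to compute $\partial_t|A|^2 = 2 h_j^i \partial_t h_i^j$ using Corollary \ref{evolve.quantities1}, and to subtract $\phi^{\alpha/\beta}\dot{F}^{pq}\nabla_p\nabla_q|A|^2$, where by the product rule $\nabla_p\nabla_q|A|^2 = 2(\nabla_p h_j^i)(\nabla_q h_i^j) + 2 h_j^i\nabla_p\nabla_q h_i^j$. The key structural observation is that the second-covariant-derivative contribution $2\phi^{\alpha/\beta} h_j^i \dot{F}^{pq}\nabla_p\nabla_q h_i^j$ from the Laplacian cancels against the matching term in $2 h_j^i \nabla_i\nabla^j\Phi$ appearing in \eqref{bigresult} (after Simons' identity has already been applied there to rewrite $\dot{F}^{pq}\nabla_{(p}\nabla_{q)}h_{ij}$ in terms of $\nabla_p\nabla_q h_{ij}$ plus polynomial curvature terms). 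This cancellation is what makes the remaining computation finite.

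After the cancellation, the gradient-squared residue $-2\phi^{\alpha/\beta}\dot{F}^{pq}(\nabla_p h_j^i)(\nabla_q h_i^j)$ and the $\ddot{F}$-residue $2\phi^{\alpha/\beta} h^{ij}\ddot{F}^{pq,rs}h_{rs,i}h_{pq,j}$ both simplify nicely under the $k=1$ specialization $F = H^{1/\beta}$: one computes $\dot{F}^{pq} = \frac{1}{\beta}H^{1/\beta-1}g^{pq}$ and $\ddot{F}^{pq,rs} = \frac{1-\beta}{\beta^2}H^{1/\beta-2}g^{pq}g^{rs}$, yielding respectively the terms $-\frac{2}{\beta}\phi^{\alpha/\beta}H^{1/\beta-1}|\nabla A|^2$ and $2\phi^{\alpha/\beta}(1-\beta)H^{-1/\beta}h^{ij}\nabla_i F\,\nabla_j F$ (the latter via the chain rule $\nabla_i F = \frac{1}{\beta}H^{1/\beta-1}\nabla_i H$).

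For the remaining terms I would contract each factor of \eqref{bigresult} with $2h^{ij}$ and add the three non-$\Phi$-derivative contributions from Corollary \ref{evolve.quantities1}, namely $2\Phi h_j^i h_i^k h_k^j$, $-2\gamma\phi'|A|^2$, and $2(\gamma u - \Phi)H$. With $k=1$, the Simons-type polynomial terms in \eqref{bigresult} reduce via $\sigma_1\sigma_1 - 2\sigma_2 = |A|^2$ and $\sigma_0 = 1$ to $\frac{2}{\beta}\phi^{\alpha/\beta}H^{1/\beta-1}|A|^4 - \frac{2}{\beta}\Phi\,h_i^l h_l^j h_j^i - \frac{2}{\beta}\Phi H + \frac{2n}{\beta}\phi^{\alpha/\beta}H^{1/\beta-1}|A|^2$; combining $2\Phi h_j^i h_i^k h_k^j$ with $-\frac{2}{\beta}\Phi h_i^l h_l^j h_j^i$ then produces the $2(1 - 1/\beta)\Phi h_i^l h_l^j h_j^i$ term in \eqref{L.A2}. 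The $\frac{\alpha}{\beta}$-weighted ambient-curvature terms in \eqref{bigresult} contract with $2h^{ij}$ to give all the remaining $\phi'/\phi$-weighted tensor contractions listed in the conclusion.

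The main difficulty is purely bookkeeping: no new geometric idea beyond \eqref{bigresult} and Corollary \ref{evolve.quantities1} is required, but roughly a dozen tensorial terms must be tracked, with careful attention to index conventions and to the implicit symmetrization in $\nabla_{(i}\nabla_{j)}\Phi$. The specialization $k=1$ is essential here: because $F$ depends only on the single scalar $H$, $\dot{F}$ becomes a scalar multiple of $g^{pq}$ and $\ddot{F}$ factors through $\nabla H \otimes \nabla H$, which is precisely what allows the otherwise awkward $\ddot{F}$-term to be rewritten in the clean $\nabla F\cdot\nabla F$ form that appears in the target identity.
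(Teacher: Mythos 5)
Your approach is precisely the paper's: expand $\partial_t|A|^2 = 2h^i_j\partial_t h_i^j$ via Corollary \ref{evolve.quantities1}, substitute the Simons-identity form \eqref{bigresult} of $\nabla_i\nabla_j\Phi$, separately expand $\phi^{\alpha/\beta}\dot F^{pq}\nabla_p\nabla_q|A|^2 = \frac{2}{\beta}\phi^{\alpha/\beta}H^{1/\beta-1}\left(|\nabla A|^2 + g^{pq}h^{ij}h_{ij,pq}\right)$, and observe the cancellation of the $\dot F^{pq}\nabla_p\nabla_q h_{ij}$ contributions; the $k=1$ simplifications $\dot F^{pq}=\frac1\beta H^{1/\beta-1}g^{pq}$, $\ddot F^{pq,rs}=\frac{1-\beta}{\beta^2}H^{1/\beta-2}g^{pq}g^{rs}$, $\sigma_1\sigma_1-2\sigma_2=|A|^2$, $\sigma_0=1$ are exactly what the paper uses. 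There is no gap in your outline.

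One point worth flagging, since you wrote $-2\gamma\phi'|A|^2$ for the third zeroth-order contribution from Corollary \ref{evolve.quantities1}: your coefficient is the correct one. Since $|A|^2=h_i^jh_j^i$, one has $2h^i_j\cdot(-\gamma\phi' h_i^j)=-2\gamma\phi'|A|^2$, whereas both the intermediate display \eqref{tmp1} in the paper and the final statement \eqref{L.A2} carry $-4\gamma\phi'|A|^2$; the statement also lists $2(\gamma u-\Phi)H$ twice, which should appear only once. These are arithmetic slips in the source that your computation quietly corrects, so your derivation actually yields $\mathcal{L}|A|^2$ with $-2\gamma\phi'(r)|A|^2$ and a single $2(\gamma u-\Phi)H$ in place of the fourth line of \eqref{L.A2}.
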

\proof
By Corollary \ref{evolve.quantities1}, we have
		\begin{align}
			\nonumber
			\frac{\p}{\p t}|A|^2
			=& 2 h_{j}^{i}\frac{\p}{\p t}h_{i}^{j}
			=2 h_{j}^{i}\left(
			\nabla_{i}\nabla^{j}\Phi+\Phi h_{i}^{k} h_{k}^{j} - \gamma\phi'h_{i}^{j}+(\gamma u -\Phi)\delta_{i}^{j}
			\right)\\
			\label{tmp1}
			=& 2 h^{ij}\nabla_{i}\nabla_{j}\Phi +
			2 \Phi h_{i}^{l} h_{l}^{j} h_{j}^{i} -
			4 \gamma \phi' |A|^2 +
			2  (\gamma u - \Phi) H.
		\end{align}
	%\begin{align*}
%		\nabla_{(i}\nabla_{j)}\Phi
%		=&(\phi(r))^{\alpha}\dot{\sigma_{k}}^{pq}\nabla_{p}\nabla_{q}h_{ij}
%		+ (\phi(r))^{\alpha}(\ddot{\sigma_{k}})^{pq,rs}h_{rs,i}h_{pq,j} \\
%		&+\alpha^2\frac{(\phi'(r))^2}{(\phi(r))^2}(\nabla_{i}r)(\nabla_{j}r)
%		\Phi -
%		\alpha\frac{1}{(\phi(r))^2}(\nabla_{i}r)(\nabla_{j}r)\Phi -
%		\alpha\frac{\phi'(r)}{(\phi(r))^2}uh_{ij}\Phi \\
%		&+\alpha\frac{(\phi'(r))^2}{(\phi(r))^2}
%		\left(
%		g_{ij}-(\nabla_{i}r)(\nabla_{j}r)
%		\right)\Phi
%		+2\alpha\frac{\phi'(r)}{\phi(r)}
%		(\nabla_{(i}r)(\nabla_{j)}\log\sigma_{k})
%		\Phi
%		\\
%		&+ (\phi(r))^{\alpha}
%		\left(\sigma_1\sigma_{k}-(k+1)\sigma_{k+1}\right)h_{ij} -
%		k\Phi\left(h^2\right)_{ij} -
%		k\Phi g_{ij} +
%		(n-k+1)(\phi(r))^{\alpha}\sigma_{k-1}h_{ij}.
%	\end{align*}
	Since $ k=1 $, $\sigma_{k} = H$, $F=H^{\frac{1}{\beta}}, \mathcal{L}=\partial_{t}-\frac{1}{\beta} \phi^{\frac{\alpha}{\beta}} H^{\frac{1}{\beta}-1} g^{ij}\nabla_{i}\nabla_{j} $. We observe that
	$$
	\dot{F}^{p q}=\frac{1}{\beta} H^{\frac{1}{\beta}-1} g^{p q}, \quad \ddot{F}^{p q, r s}=\frac{1-\beta}{\beta^{2}} H^{\frac{1}{\beta}-2} g^{r s} g^{p q} .
	$$
	Thus
	$$
	\ddot{F}^{p q, r s} \nabla_{i} h_{p q} \nabla_{j} h_{r s}=(1-\beta) H^{-\frac{1}{\beta}} \nabla_{i} F \nabla_{j} F.
	$$
	By \eqref{bigresult}, we have
		\begin{align}
			\nabla_{(i}\nabla_{j)}\Phi
			\nonumber
			=&\frac{1}{\beta} (\phi(r))^{\frac{\alpha}{\beta}} H^{\frac{1}{\beta}-1} g^{p q}\nabla_{p}\nabla_{q}h_{ij}
			+  (\phi(r))^{\frac{\alpha}{\beta}} (1-\beta) H^{-\frac{1}{\beta}} \nabla_{i} F \nabla_{j} F \\
			\nonumber
			&+\left(\frac{\alpha}{\beta}\right)^2\frac{(\phi'(r))^2}{(\phi(r))^2}(\nabla_{i}r)(\nabla_{j}r)
			\Phi -
			\frac{\alpha}{\beta}\frac{1}{(\phi(r))^2}(\nabla_{i}r)(\nabla_{j}r)\Phi -
			\frac{\alpha}{\beta}\frac{\phi'(r)}{(\phi(r))^2}uh_{ij}\Phi \\
			\nonumber
			&+\frac{\alpha}{\beta}\frac{(\phi'(r))^2}{(\phi(r))^2}
			\left(
			g_{ij}-(\nabla_{i}r)(\nabla_{j}r)
			\right)\Phi
			+2\frac{\alpha}{\beta}\phi'(r) (\phi(r))^{\frac{\alpha}{\beta}-1}
			(\nabla_{(i}r)(\nabla_{j)}F)
			\\
			&+
			\frac{1}{\beta}(\phi(r))^{\frac{\alpha}{\beta}}H^{\frac{1}{\beta}-1}
			|A|^2 h_{ij}
			-\frac{1}{\beta}\Phi\left(h^2\right)_{ij}
			\label{tmp2}
			-\frac{1}{\beta}\Phi g_{ij} +
			\frac{n}{\beta}H^{\frac{1}{\beta}-1}(\phi(r))^{\frac{\alpha}{\beta}}h_{ij}.
		\end{align}
	Plugging \eqref{tmp2} into \eqref{tmp1},
	we get
	\begin{align*}
		\frac{\p}{\p t}|A|^2
		=& \frac{2}{\beta}(\phi(r))^{\frac{\alpha}{\beta}}H^{\frac{1}{\beta}-1}
		g^{pq}h^{ij}\nabla_{p}\nabla_{q}h_{ij} +
		2(\phi(r))^{\frac{\alpha}{\beta}} (1-\beta) H^{-\frac{1}{\beta}} h^{ij}\nabla_{i} F \nabla_{j} F\\
		& +
		2 \frac{\alpha}{\beta}\left(\frac{\alpha}{\beta}-1\right)\frac{(\phi'(r))^2}{(\phi(r))^2}\Phi
		h^{ij}(\nabla_{i}r)(\nabla_{j}r)
		-
		2 \frac{\alpha}{\beta}\frac{1}{(\phi(r))^2}\Phi h^{ij}(\nabla_{i}r)(\nabla_{j}r)
		\\
		&-
		2 \frac{\alpha}{\beta}\frac{\phi'(r)}{(\phi(r))^2}u\Phi |A|^2
		+ 2 \frac{\alpha}{\beta}\frac{(\phi'(r))^2}{(\phi(r))^2}\Phi H
		+
		4\frac{\alpha}{\beta}\phi'(r) (\phi(r))^{\frac{\alpha}{\beta}-1}h^{ij}
		(\nabla_{i}r)(\nabla_{j}F)\\
		&+
		\frac{2}{\beta}H^{\frac{1}{\beta}-1}(\phi(r))^{\frac{\alpha}{\beta}}|A|^4
		-\frac{2}{\beta}\Phi H + \frac{2n}{\beta}H^{\frac{1}{\beta}-1}(\phi(r))^{\frac{\alpha}{\beta}}|A|^2 \\
		&-
		4 \gamma \phi'(r) |A|^2 +
		2 (\gamma u - \Phi) H+2(1-\frac 1{\beta})\Phi h_i^lh_l^jh_j^i.
	\end{align*}
	Also,
	\begin{align*}
		g^{pq}\nabla_{p}\nabla_{q}|A|^2
		=& g^{pq}\nabla_{p}\left( 2h_{j}^{i}\nabla_{q}h_{i}^{j} \right)
		= 2g^{pq}\left(\nabla_{p}h_{j}^{i}\right)\left(\nabla_{q}h_{i}^{j}\right) +
		2g^{pq}h_{j}^{i}\nabla_{p}\nabla_{q}h_{i}^{j}\\
		=&2g^{pq}\left(\nabla_{p}h_{j}^{i}\right)\left(\nabla_{q}h_{i}^{j}\right) +
		2g^{pq}h^{ij}h_{ij,pq}
		=2|\nabla A|^2 + 2g^{pq}h^{ij}h_{ij,pq}.
	\end{align*}
	Combining the above two equations, we get \eqref{L.A2}.
\endproof

\begin{prop}\label{C2estimate3}
	Under the flow \eqref{flow} for $ k =1 $, when the initial hypersurface is mean convex, if $\alpha > 1+\beta$, the principal curvatures of the mean convex solution have a uniform bound, i.e.
	$$
	\left|\kappa_{i}\right| \leq C \quad i=1, \cdots, n.
	$$
\end{prop}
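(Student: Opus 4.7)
The plan is to apply the maximum principle to an auxiliary function combining $|A|^{2}$ with the support function $u$. By \propref{C2estimate1} and \propref{C2estimate2}, $H=\sigma_{1}$ is already squeezed between two positive constants, so mean convexity of $\MM_{t}$ is preserved along the flow. Since pointwise $|\kappa_{i}|\le |A|$, it suffices to establish a uniform upper bound on $|A|^{2}$.

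I would consider the test function $W=\log|A|^{2}-\lambda\log(u-a)$ on $\MM_{t}$, where $a=\tfrac{1}{2}\inf_{\SSS^{n}\times[0,T)}u>0$ (positive by \corref{C1estimate2}) and $\lambda=2$. Let $p_{0}$ be a spatial maximum of $W(\cdot,t)$. Using the identity $\mathcal{L}(\log h)=\mathcal{L}h/h+a^{ij}\nabla_{i}h\nabla_{j}h/h^{2}$ together with the critical-point relation $\nabla|A|^{2}/|A|^{2}=\lambda\nabla u/(u-a)$ at $p_{0}$, one obtains
\begin{equation*}
\mathcal{L}W\Big|_{p_{0}} = \frac{\mathcal{L}|A|^{2}}{|A|^{2}} - \lambda\frac{\mathcal{L}u}{u-a} + (\lambda^{2}-\lambda)\,a^{ij}\frac{\nabla_{i}u\,\nabla_{j}u}{(u-a)^{2}},
\end{equation*}
where $a^{ij}=\tfrac{1}{\beta}\phi^{\alpha/\beta}H^{1/\beta-1}g^{ij}$ are the coefficients of $\mathcal{L}$.

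The crucial cancellation is of the worst quartic pieces. The term $+\tfrac{2}{\beta}\phi^{\alpha/\beta}H^{1/\beta-1}|A|^{4}$ in \eqref{L.A2} contributes $+\tfrac{2}{\beta}\phi^{\alpha/\beta}H^{1/\beta-1}|A|^{2}$ to $\mathcal{L}|A|^{2}/|A|^{2}$; since for $k=1$ we have $\sigma_{1}\sigma_{k}-(k+1)\sigma_{k+1}=H^{2}-2\sigma_{2}=|A|^{2}$, formula \eqref{L.u} supplies $+\tfrac{\lambda}{\beta}\phi^{\alpha/\beta}H^{1/\beta-1}|A|^{2}u/(u-a)$ in $\lambda\,\mathcal{L}u/(u-a)$. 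With $\lambda=2$, these two combine to
\begin{equation*}
-\frac{2a}{\beta(u-a)}\phi^{\alpha/\beta}H^{1/\beta-1}|A|^{2}\le -c_{0}|A|^{2},
\end{equation*}
for some $c_{0}>0$, by the $C^{0}$, $C^{1}$ bounds and the two-sided bound on $H$. In parallel, using $|\nabla A|^{2}\ge |\nabla|A|^{2}|^{2}/(4|A|^{2})$ and the critical-point relation, the good diffusive term $-\tfrac{2}{\beta}\phi^{\alpha/\beta}H^{1/\beta-1}|\nabla A|^{2}$ provides $-\tfrac{\lambda^{2}}{2\beta}\phi^{\alpha/\beta}H^{1/\beta-1}|\nabla u|^{2}/(u-a)^{2}$ which, for $\lambda=2$, exactly balances the extra positive gradient term $(\lambda^{2}-\lambda)a^{ij}\nabla_{i}u\nabla_{j}u/(u-a)^{2}$ in the identity above. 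The remaining algebraic pieces in \eqref{L.A2} and \eqref{L.u} are bounded by $C(|A|^{2}+|A|^{3})$ using $|h_{ij}|,|h^{ij}|\le C|A|$, $|\Phi|\le C$, and $|\nabla u|=|h_{i}^{k}\langle V,X_{k}\rangle|\le C|A|$. Collecting everything,
\begin{equation*}
\mathcal{L}W\Big|_{p_{0}}\le -\tfrac{c_{0}}{2}|A|^{2}+C_{1}|A|+C_{2},
\end{equation*}
so $\mathcal{L}W<0$ whenever $|A|^{2}(p_{0})$ exceeds a threshold. The parabolic maximum principle then bounds $W_{\max}(t)$ by $\max\{W_{\max}(0),C\}$, which gives the required uniform bound on $|A|^{2}$ and hence on each $|\kappa_{i}|$.

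The main obstacle is controlling the non-sign-definite cubic term $2(1-1/\beta)\Phi\,\mathrm{tr}(A^{3})$ and the mixed-gradient term $2\phi^{\alpha/\beta}(1-\beta)H^{-1/\beta}h^{ij}\nabla_{i}F\nabla_{j}F$ when $\beta<1$: without uniform convexity of $\MM_{t}$, the tensor $h^{ij}$ is indefinite, so these must be absorbed into the remainder of the diffusive term $-|\nabla A|^{2}$ (using $\nabla F=\tfrac{1}{\beta}H^{1/\beta-1}\nabla H$ together with a Kato-type inequality $|\nabla H|^{2}\le n|\nabla A|^{2}$) and into the quadratic cancellation obtained above. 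The choice $\lambda=2$ is precisely what balances the positive log-ratio gradient contribution against the diffusive term, and this is the delicate point of the argument.
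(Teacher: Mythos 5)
Your proposal takes a genuinely different route from the paper: you use the auxiliary function $W=\log|A|^2-\lambda\log(u-a)$ built from the support function $u$ with $\lambda=2$, whereas the paper uses $Q=\log|A|^2-2B\log(\Phi-a)$ built from the speed $\Phi=\phi^{\alpha/\beta}F$ with $B=1-\tfrac{a}{2c}\in(\tfrac12,1)$. Your quartic cancellation is correct: combining $\tfrac{2}{\beta}\phi^{\alpha/\beta}H^{1/\beta-1}|A|^2$ from $\mathcal L|A|^2/|A|^2$ with $-\tfrac{2}{\beta}\phi^{\alpha/\beta}H^{1/\beta-1}|A|^2\tfrac{u}{u-a}$ from $-2\mathcal Lu/(u-a)$ (using $\sigma_1^2-2\sigma_2=|A|^2$) does give $-\tfrac{2a}{\beta(u-a)}\phi^{\alpha/\beta}H^{1/\beta-1}|A|^2\le -c_0|A|^2$. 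The identity $\mathcal LW|_{p_0}=\tfrac{\mathcal L|A|^2}{|A|^2}-\lambda\tfrac{\mathcal Lu}{u-a}+(\lambda^2-\lambda)\,a^{ij}\tfrac{\nabla_iu\nabla_ju}{(u-a)^2}$ is also correct.

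However, there is a genuine gap in the gradient bookkeeping, and it is exactly the point you flag as ``delicate'' without resolving. With $\lambda=2$, the Cauchy--Schwarz estimate $\tfrac{|\nabla A|^2}{|A|^2}\ge\tfrac{|\nabla|A|^2|^2}{4|A|^4}=\tfrac{\lambda^2}{4}\tfrac{|\nabla u|^2}{(u-a)^2}$ uses \emph{all} of the diffusive term to cancel $(\lambda^2-\lambda)\,a^{ij}\tfrac{\nabla_iu\nabla_ju}{(u-a)^2}$: the coefficient balance $\tfrac{\lambda^2}{2}-\lambda=0$ leaves no remainder. Meanwhile the bad term from \eqref{L.A2}, $\tfrac{2(1-\beta)}{|A|^2}\phi^{\alpha/\beta}H^{-1/\beta}h^{ij}\nabla_iF\nabla_jF$, is bounded only by $\tfrac{C}{|A|}|\nabla F|^2$. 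Your critical-point relation $\tfrac{\nabla|A|^2}{|A|^2}=\lambda\tfrac{\nabla u}{u-a}$ controls $\nabla u$, not $\nabla F$; these two are unrelated for a mean convex (non-convex) hypersurface. Even if you try the Kato route $|\nabla F|^2\le C|\nabla A|^2$ and sacrifice part of the diffusive term by choosing $\lambda<2$ and splitting, the leftover is only $-c\,|\nabla A|^2/|A|^2$, which for large $|A|$ cannot dominate the $+C|\nabla A|^2/|A|$ piece (one power of $|A|$ too weak), so $\mathcal LW\le 0$ cannot be deduced. The same problem recurs with the mixed term $\tfrac{C}{|A|}|\nabla F|$, which in the absence of a leftover $-|\nabla F|^2$ term has nowhere to go.

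The paper avoids this precisely by building the second logarithm out of $\Phi$: at the maximum of $Q$, the critical-point relation $\tfrac{\nabla|A|^2}{|A|^2}=2B\tfrac{\nabla\Phi}{\Phi-a}$ directly bounds $\nabla\Phi$ (hence $\nabla F$, up to a bounded $\nabla r$-contribution) by $|\nabla A|/|A|$, and the choice $\tfrac12<B<1$ makes $2B^2-3B+1=(2B-1)(B-1)<0$, so the combination of the diffusive term with the log-gradient square yields a \emph{strictly negative} $-c|\nabla\Phi|^2/(\Phi-a)^2\lesssim -c'|\nabla F|^2+C|\nabla F|$ remainder. That negative $|\nabla F|^2$ remainder is what absorbs $\tfrac{C}{|A|}|\nabla F|^2$ and $C|\nabla F|$ once $|A|$ is large. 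Replacing $\Phi$ by $u$ destroys exactly this coupling; unless you change the auxiliary function (or can argue independently that $|\nabla F|$ is a priori controlled, which you cannot at this stage), the argument as proposed does not close for $\beta<1$.
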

\begin{proof}
	Plugging $ k=1 $, $ \Phi = (\phi(r))^{\frac{\alpha}{\beta}}H $	into \eqref{partial.t.Phi}, we get
	\begin{align}\label{L.Phi1}
		\mathcal{L}\Phi=&
		\frac{\alpha-1}{\beta}\gamma\phi'(r)\Phi - \frac{\alpha}{\beta}\omega\frac{\phi'(r)}{\phi(r)}\Phi^2
		+\frac{1}{\beta}(\phi(r))^{\frac{\alpha}{\beta}}  H^{\frac{1}{\beta}-1}
		\Phi |A|^2\nonumber\\
&		+
		\frac{n}{\beta} H^{\frac{1}{\beta}-1}(\phi(r))^{\frac{\alpha}{\beta}} (\gamma u-\Phi).
	\end{align}
Inspired by the work \cite{L-X-Z-2021} of Li, Xu and Zhang, we define an auxiliary function $$ Q = \log|A|^2 - 2B\log (\Phi-a), $$ where
	$ B=1-\frac{a}{2c} $,
	$ a=\frac{1}{2} \inf_{\SSS^n \times[0, T)} \Phi $, and
	$ c=\sup_{\SSS^n \times[0, T)} \Phi $.
	Thus
		\begin{align}\label{L.Q.temp}
			&\mathcal{L}Q
			=\mathcal{L}\biggl( \log(|A|^2) - 2B\log (\Phi-a) \biggr)\nonumber\\
			=&\frac{\mathcal{L}\left( |A|^2 \right)}{|A|^2} +
			\frac{1}{\beta} \phi^{\frac{\alpha}{\beta}} H^{\frac{1}{\beta}-1}g^{ij}\frac{\nabla_{i}|A|^2}{|A|^2}\frac{\nabla_{j}|A|^2}{|A|^2} -
			2B\frac{\mathcal{L}\Phi}{\Phi-a} \nonumber\\
&-
			2B \frac{1}{\beta} \phi^{\frac{\alpha}{\beta}} H^{\frac{1}{\beta}-1}g^{ij} \frac{\nabla_{i}\Phi}{\Phi-a}\frac{\nabla_{j}\Phi}{\Phi-a}.
		\end{align}
	At the spatial maximum point of $ Q $, we have
	\begin{equation}\label{L.Q.condition}
		\frac{\nabla_{i}|A|^2}{|A|^2} = 2B\frac{\nabla_{i}\Phi}{\Phi-a}.
	\end{equation}
	Plugging \eqref{L.A2}, \eqref{L.Phi1} and \eqref{L.Q.condition} into \eqref{L.Q.temp}, we get
	\begin{align}\label{L.Q.temp1}
		\nonumber
		&\mathcal{L}Q
		\\
		\nonumber
		=&\frac{\mathcal{L}\left( |A|^2 \right)}{|A|^2} +
		4B^2\frac{1}{\beta} \phi^{\frac{\alpha}{\beta}} H^{\frac{1}{\beta}-1} g^{ij} \frac{\nabla_{i}\Phi}{\Phi-a}\frac{\nabla_{j}\Phi}{\Phi-a} -
		2B\frac{\mathcal{L}\Phi}{\Phi-a} -
		2B \frac{1}{\beta} \phi^{\frac{\alpha}{\beta}} H^{\frac{1}{\beta}-1}g^{ij}
		\frac{\nabla_{i}\Phi}{\Phi-a}\frac{\nabla_{j}\Phi}{\Phi-a}\\
		=\nonumber
		&- \frac{2}{\beta} \phi^{\frac{\alpha}{\beta}} H^{\frac{1}{\beta}-1}
		\frac{|\nabla A|^2}{|A|^2} +
		2(\phi(r))^{\frac{\alpha}{\beta}} (1-\beta) H^{-\frac{1}{\beta}} \frac{h^{ij}}{|A|^2}\nabla_{i} F \nabla_{j} F\\
		\nonumber
		& +
		2 \frac{\alpha}{\beta}\left(\frac{\alpha}{\beta}-1\right)\frac{(\phi'(r))^2}{(\phi(r))^2}\Phi
		\frac{h^{ij}}{|A|^2}(\nabla_{i}r)(\nabla_{j}r)
		-
		2 \frac{\alpha}{\beta}\frac{1}{(\phi(r))^2}\Phi \frac{h^{ij}}{|A|^2}(\nabla_{i}r)(\nabla_{j}r)
		\\
		\nonumber
		&-
		2 \frac{\alpha}{\beta}\frac{\phi'(r)}{(\phi(r))^2}u\Phi
		+ 2 \frac{\alpha}{\beta}\frac{(\phi'(r))^2}{(\phi(r))^2}\Phi \frac{H}{|A|^2}
		+
		4\frac{\alpha}{\beta}\phi'(r) (\phi(r))^{\frac{\alpha}{\beta}-1}
		\frac{h^{ij}}{|A|^2}
		(\nabla_{i}r)(\nabla_{j}F)\\
		\nonumber
		&+
		\frac{2}{\beta}H^{\frac{1}{\beta}-1}(\phi(r))^{\frac{\alpha}{\beta}}|A|^2
		-\frac{2}{\beta}\Phi \frac{H}{|A|^2} + \frac{2n}{\beta}H^{\frac{1}{\beta}-1}(\phi(r))^{\frac{\alpha}{\beta}}
		-
		4 \gamma \phi'(r)  +
		2 (\gamma u - \Phi) \frac{H}{|A|^2}\\
		\nonumber
		&
		+2(1-\frac 1{\beta}) \frac{h_i^lh_l^jh_j^i}{|A|^2}\Phi
		-\frac{2B(\alpha-1)}{\beta}\gamma\phi'(r)\frac{\Phi}{\Phi-a} + \frac{2B\alpha}{\beta}\omega\frac{\phi'(r)}{\phi(r)}\frac{\Phi^2}{\Phi-a}\\
		\nonumber
		&
		-\frac{2B}{\beta}(\phi(r))^{\frac{\alpha}{\beta}}  H^{\frac{1}{\beta}-1}
		\frac{\Phi}{\Phi-a} |A|^2-
		\frac{2Bn}{\beta} H^{\frac{1}{\beta}-1}(\phi(r))^{\frac{\alpha}{\beta}} \frac{\gamma u-\Phi}{\Phi-a} \\
		\nonumber
		&+ 
		\frac{4B^2-2B}{\beta} \phi^{\frac{\alpha}{\beta}} H^{\frac{1}{\beta}-1}  \frac{|\nabla\Phi|^2}{(\Phi-a)^2}\\
		=&I_1+\cdots+I_7,
	\end{align}
where $I_i$ denotes the $i$th line on the right hand side of \eqref{L.Q.temp1}.
	
	Since the eigenvalues of $ (h_{i}^{j}) $ are $ \kappa_{1},\cdots,\kappa_{n} $, and $ |A| = \sqrt{(\kappa_{1})^2+\cdots+(\kappa_{n})^2} \geq |\kappa_{i}|,\forall i $, we have
	$$ -|A|g^{ij} \leq h^{ij} \leq |A|g^{ij}, $$
	Without loss of generality, we may assume the local coordinate $ \{x_1,\cdots,x_n\} $ is orthonormal at the spatial maximum point of $ Q $, thus
	$$ \nabla_{i}r = \frac{\left\langle X_i,V\right\rangle}{\phi} \leq 1, $$
	which implies that
	$$ |\nabla r|^2 \leq n. $$
	So we have
	\begin{align}
		\label{tech1}
		&\left|\frac{h^{ij}}{|A|}(\nabla_{i}r)(\nabla_{j}r)\right| \leq
		|\nabla r|^2 \leq n,\\
		\label{tech2}
		&\left|\frac{h^{ij}}{|A|}(\nabla_{i}F)(\nabla_{j}F)\right| \leq
		|\nabla F|^2,\\
		\label{tech3}
		&\left|\frac{h^{ij}}{|A|}(\nabla_{i}r)(\nabla_{j}F)\right| \leq
		|\nabla r| |\nabla F| \leq
		\sqrt{n}|\nabla F|.
	\end{align}

	Using the Cauchy inequality, it is easy to see
	$$	\frac{\left\langle\nabla|A|^{2}, \nabla \Phi\right\rangle}{(\Phi-a)}
	=
	2 \nabla_{l} h_{i j}\left(h_{i j} \frac{\nabla_{l} \Phi}{\Phi-a}\right)
	\leq
	|\nabla A|^{2}+|A|^{2} \frac{|\nabla \Phi|^{2}}{(\Phi-a)^{2}}.$$
	By \eqref{L.Q.condition},
	$$	\frac{\nabla_{i}|A|^2}{|A|^2} = 2B\frac{\nabla_{i}\Phi}{\Phi-a},$$
	thus
	$$	\frac{|\nabla A|^{2}}{|A|^2}+ \frac{|\nabla \Phi|^{2}}{(\Phi-a)^{2}}
	\geq
	\frac{\left\langle\nabla|A|^{2}, \nabla \Phi\right\rangle}{|A|^2(\Phi-a)}
	=
	2B\frac{|\nabla\Phi|^2}{(\Phi-a)^2}.$$
	That is,
	\begin{equation}\label{tech4}
		\frac{|\nabla A|^{2}}{|A|^2}
		\geq
		(2B-1)\frac{|\nabla\Phi|^2}{(\Phi-a)^2}.
	\end{equation}
	
	By \eqref{tech2} and \eqref{tech4},
	we estimate the first line of \eqref{L.Q.temp1} as
	\begin{align}
		I_1	\leq
		\label{first.line}
		&- \frac{4B-2}{\beta} \phi^{\frac{\alpha}{\beta}} H^{\frac{1}{\beta}-1}
		\frac{|\nabla\Phi|^2}{(\Phi-a)^2}
		 +
		2 (1-\beta)\phi^{\frac{\alpha}{\beta}} H^{-\frac{1}{\beta}}
		\frac{1}{|A|}|\nabla F|^2.
	\end{align}
	
	By \eqref{tech1} and \eqref{tech3},
	we estimate the second and the third line of \eqref{L.Q.temp1} as
	\begin{align}
		I_2+I_3\leq & C + \frac{C}{|A|} + C|\nabla F|.\label{second.third.line}
	\end{align}
	for some constant $ C>0 $,
	by the boundness of $ r $, $ u $, and $ H $.
	
	For the rest four lines of \eqref{L.Q.temp1}, we have
	\begin{align}
		I_4+I_5+I_6+I_7		\leq &
		\frac{2}{\beta}\left(1-B\frac{\Phi}{\Phi-a}\right)\phi^{\frac{\alpha}{\beta}}  H^{\frac{1}{\beta}-1}
		|A|^2 \nonumber\\
		&+
		\frac{4B^2-2B}{\beta} \phi^{\frac{\alpha}{\beta}} H^{\frac{1}{\beta}-1}  \frac{|\nabla\Phi|^2}{(\Phi-a)^2} +
		C|A| +
		C + \frac{C}{|A|}.\label{rest.lines}
	\end{align}
	for some constant $ C>0 $.
	
	Combining \eqref{first.line}, \eqref{second.third.line} and \eqref{rest.lines}, and assuming that $ |A| $ is large enough, we get
	\begin{align}\label{L.Q.temp2}
		\mathcal{L}Q
		\leq &
		\frac{2}{\beta}\left(1-B\frac{\Phi}{\Phi-a}\right)\phi^{\frac{\alpha}{\beta}}  H^{\frac{1}{\beta}-1}
		|A|^2 \nonumber\\
&+
		\frac{2}{\beta} \left( 2B^2- 3B + 1 \right)
		\phi^{\frac{\alpha}{\beta}} H^{\frac{1}{\beta}-1}
		\frac{|\nabla \Phi|^2}{(\Phi-a)^2} +
		C|A| +
		C |\nabla F| + C,
	\end{align}
	for some constant $ C>0 $ at the spatial maximum point of $Q$.
	
	For the first term in \eqref{L.Q.temp2}, since
	$$	1 - B\frac{\Phi}{\Phi-a}
	= 1 - \frac{1- \frac{a}{2c}}{1-\frac{a}{\Phi}}
	\leq 1 - \frac{1- \frac{a}{2c}}{1-\frac{a}{c}}
	= - \frac{\frac{a}{2c}}{1-\frac{a}{c}},$$
	and $ (\phi(r))^{\frac{\alpha}{\beta}}H^{\frac{1}{\beta}-1} \geq c' $ for some positive constant $ c' $,
	we have
	\begin{equation}\label{first.term}
		\frac{2}{\beta}\left( 1 - B\frac{\Phi}{\Phi-a} \right) (\phi(r))^{\frac{\alpha}{\beta}}H^{\frac{1}{\beta}-1}|A|^2 \leq -c_{1}|A|^2,
	\end{equation}
	for some constant $ c_{1}>0 $.
	
	For the second term in \eqref{L.Q.temp2},
	$$
	2B^2-3B+1 = (2B-1)(B-1) = - \frac{a}{2c}\left( 1-\frac{a}{c} \right),
	$$
	which is a negative constant, and
	\begin{align*}
		&|\nabla \Phi|^2
		= \left|\nabla\left( (\phi(r))^{\frac{\alpha}{\beta}}  F \right)\right|^2
		= \left|\frac{\alpha}{\beta}(\phi(r))^{\frac{\alpha}{\beta}-1}\phi'(r)F\nabla r + (\phi(r))^{\frac{\alpha}{\beta}}\nabla F\right|^2\\
		= &\left(\frac{\alpha}{\beta}\right)^2(\phi(r))^{2\left(\frac{\alpha}{\beta}-1\right)}(\phi'(r))^2 F^2 |\nabla r|^2 +
		\frac{2\alpha}{\beta}(\phi(r))^{\frac{2\alpha}{\beta}-1}\phi'(r)F
		\left\langle \nabla r, \nabla F \right\rangle +
		(\phi(r))^{\frac{2\alpha}{\beta}} |\nabla F|^2 \\
		\geq & c'|\nabla F|^2 - C|\nabla F|,
	\end{align*}
	for some positive constant $ c', C $.
	Thus
	\begin{equation}\label{second.term}
		\frac{2}{\beta} \left( 2B^2- 3B + 1 \right)
		\phi^{\frac{\alpha}{\beta}} H^{\frac{1}{\beta}-1}
		\frac{|\nabla \Phi|^2}{(\Phi-a)^2}
		\leq -c_{2}|\nabla F|^2 + C |\nabla F|,
	\end{equation}
	for some positive constant $ c_2,C $.
	
	Plugging \eqref{first.term} and \eqref{second.term} into \eqref{L.Q.temp2}, we get
	\begin{equation}
		\mathcal{L}Q \leq
		-c_{1}|A|^2 - c_{2}|\nabla F|^2 + C|A| + C|\nabla F| + C \leq
		-c_{1}|A|^2 + C|A| + C.
	\end{equation}
	Since $ |A| $ tends to $ +\infty $ as $ Q $ tends to $ +\infty $, if $ Q $ is large enough, $ \mathcal{L}Q\leq 0 $, thus we prove the boundness of $ Q $, which implies the boundness of $ |A| $. So the principal curvatures $ \kappa_{i}(i=1,\cdots,n) $ are bounded.
\end{proof}

\section{$C^2$ estimate for uniformly convex solutions}\label{sec:6}
In this section, we consider uniformly convex solution of the flow \eqref{flow}. We will prove that if the initial hypersurface is uniformly convex, then there exists a positive uniform lower bound of principal curvatures.

To estimate the lower bound of the principal curvatures, we need the following lemma.
\begin{lem}[see \cite{L-S-W-2020}, \cite{Urb-1991}]\label{inverse.concave}
	Let $\left\{\tilde{h}^{i j}\right\}$ be the inverse of $\left\{h_{i j}\right\}$. Then $\left\{\tilde{h}_{i}^{j}\right\}$ represents the inverse of Weingarten map. If $\left\{h_{i}^{j}\right\}>0$, we have
	$$
	\left({G}^{p q, l m}+2 {G}^{p m} \tilde{h}_{q}^{l}\right) \eta_{p}^{q} \eta_{l}^{m} \geq 2 G^{-1}\left({G}^{p q} \eta_{p}^{q}\right)^{2}.
	$$
	for any tensor $\left\{\eta_{p}^{q}\right\}$, where $G=\sigma_{k}^{\frac{1}{k}}\left(h_{i}^{j}\right)$.
\end{lem}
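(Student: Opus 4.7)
The plan is to recognize the inequality as the classical inverse-concavity property of $G = \sigma_{k}^{1/k}$ on the positive cone, and then to prove it by dualization. Define the dual function
\begin{equation*}
    G^{*}(A) := G(A^{-1})^{-1}
\end{equation*}
on the space of positive definite symmetric matrices. The goal is to show $G^{*}$ is concave there, because, as explained below, this is equivalent to the stated inequality.

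To make the equivalence precise, I would differentiate the identity $G^{*}(A)\cdot G(A^{-1}) = 1$ twice in $A$. Using the matrix-inverse rule $\partial(A^{-1})_{ij} = -(A^{-1})_{ip}(\partial A)_{pq}(A^{-1})_{qj}$ and writing the derivatives of $G$ in the paper's index convention, a careful computation yields
\begin{equation*}
    \ddot G^{*\,pq,lm} \;=\; G^{-2}\bigl(\dot G^{pq,lm} + 2\dot G^{pm}\tilde h_{q}^{l}\bigr) \;-\; 2G^{-3}\dot G^{pq}\dot G^{lm},
\end{equation*}
where the derivatives of $G$ are evaluated at the argument $\{h_{i}^{j}\}$ with inverse $\{\tilde h_{i}^{j}\}$. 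Contracting with a symmetric $\eta_{p}^{q}$ and demanding $\ddot G^{*\,pq,lm}\eta_{p}^{q}\eta_{l}^{m} \le 0$ reproduces exactly the claimed inequality.

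Next, to prove concavity of $G^{*}$, I would reduce to an eigenvalue statement. Since $G$, and hence $G^{*}$, is invariant under orthogonal conjugation, a theorem of Davis (or a direct unitary-invariance argument) shows that concavity of $G^{*}$ on symmetric matrices is equivalent to concavity of its restriction to diagonal matrices, i.e.\ of $G^{*}$ as a symmetric function of eigenvalues. A direct computation using that the eigenvalues of $A^{-1}$ are $1/\kappa_{i}$ yields
\begin{equation*}
    G^{*}(\kappa_{1},\ldots,\kappa_{n}) \;=\; \biggl(\frac{\sigma_{n}(\kappa)}{\sigma_{n-k}(\kappa)}\biggr)^{1/k}
\end{equation*}
on $\{\kappa_{i}>0\}$. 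Concavity of this degree-one homogeneous function on the positive orthant is classical: it follows either from G\aa rding's theory of hyperbolic polynomials (since $\sigma_{n}$ is hyperbolic with G\aa rding cone $\{\kappa_{i}>0\}$ and $(\sigma_{n}/\sigma_{n-k})^{1/k}$ is a Minkowski-type quotient), or from the Newton--Maclaurin inequalities of Lemma \ref{Maclaurin} applied to $\sigma_{n-k}/\sigma_{n} = \sigma_{k}(\kappa^{-1})$.

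The main obstacle I expect is the first step: carrying out the second-derivative computation for $G^{*}$ with the mixed upper/lower-index conventions used in the paper, and symmetrizing correctly so that the combination $\dot G^{pq,lm}+2\dot G^{pm}\tilde h_{q}^{l}$ appears cleanly on the right. Once this algebraic identification is secured, the remainder of the argument reduces to the classical convexity theory of elementary symmetric polynomials and does not interact with the hyperbolic ambient geometry.
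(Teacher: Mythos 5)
The paper does not prove this lemma at all: it is stated as a known fact and cited to \cite{L-S-W-2020} and \cite{Urb-1991}. Your proposal reconstructs the standard argument from those references (duality, concavity of $G_*$, reduction to concavity of $(\sigma_n/\sigma_{n-k})^{1/k}$ on the positive orthant), and the overall chain of reasoning is correct. One small imprecision: the displayed formula for $\ddot G^{*\,pq,lm}$ is not a literal tensor identity as written. When one differentiates $G^*(A)=G(A^{-1})^{-1}$ twice at $A=\tilde h$, the chain rule introduces the change of contracting tensor $\xi\mapsto\eta=A^{-1}\xi A^{-1}=h\xi h$; the nonpositivity of $\ddot G^{*\,pq,lm}\xi_{pq}\xi_{lm}$ for all symmetric $\xi$ is then \emph{equivalent} to the stated inequality for all symmetric $\eta$ (since $\xi\mapsto h\xi h$ is a bijection on symmetric tensors), but the two sides of your display are quantities contracted against different tensors and are not equal term by term. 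Stating the equivalence at the level of quadratic forms rather than as an identity of fourth-order tensors would fix this. Everything else — the identity $G_*(\kappa)=(\sigma_n/\sigma_{n-k})^{1/k}$ and its concavity via G\aa rding or the quotient-of-$\sigma_k$ machinery — is standard and correct.
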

Using \lemref{inverse.concave} now we can prove
\begin{prop}\label{C2estimate5}
	Let $X(\cdot, t)$ be a smooth, closed and uniformly convex solution to the flow \eqref{flow} for $t \in[0, T)$, which enclosed the origin. If $\alpha > k+\beta$, there exists a positive constant $C$ depending only on $\alpha$ and $M_{0}$, such that the principal curvatures of $X(\cdot, t)$ satisfy
	$$
	\frac{1}{C} \leq \kappa_{i}(\cdot, t) \leq C, \quad \forall t \in[0, T) \text { and } i=1,2, \cdots, n.
	$$
\end{prop}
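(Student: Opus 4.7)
The plan is to first reduce the claim to a uniform positive lower bound on the smallest principal curvature $\kappa_{\min}$, and then to establish that bound by a maximum principle argument on the inverse of the Weingarten map, exploiting the inverse-concavity supplied by \lemref{inverse.concave}. For the reduction, observe that on a uniformly convex hypersurface $\sigma_{k}\geq \binom{n-1}{k-1}\kappa_{j}\kappa_{\min}^{k-1}$ holds for every $j$, since the $\binom{n-1}{k-1}$ monomials in $\sigma_{k}$ that contain $\kappa_{j}$ are each at least $\kappa_{j}\kappa_{\min}^{k-1}$. Combining the upper bound $\sigma_{k}\leq C$ from \propref{C2estimate2} with a lower bound $\kappa_{\min}\geq 1/C'$ then yields $\kappa_{j}\leq C''$, so only the positive lower bound on $\kappa_{\min}$ genuinely needs to be proved.

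To bound $\kappa_{\min}$ from below, let $\tilde h_i^j$ denote the inverse Weingarten endomorphism, whose largest eigenvalue is $1/\kappa_{\min}$. I would consider an auxiliary function of the form $W(x,t)=\log \tilde h^{\xi\xi}(x,t) - A\log(u-a) + B\phi(r)$, where $\xi$ is a unit vector attaining the largest eigenvalue of $\tilde h_i^j$ at the spatial maximum, $a=\tfrac12\inf u$, and $A,B>0$ are constants to be tuned. At the spatial maximum I would choose a local frame diagonalizing $h_i^j$ with $h_1^1=\kappa_{\min}$, so that $\tilde h^{\xi\xi}=1/h_1^1$, and use a standard perturbation trick that justifies differentiating $1/h_1^1$ as if $\xi$ were covariantly constant. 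Substituting $\partial_t h_1^1$ from \corref{evolve.quantities1} and expanding $\dot F^{pq}\nabla_p\nabla_q h_1^1$ by means of the Simons-type identity \eqref{basic.identities5} together with formula \eqref{bigresult} for $\nabla_{(i}\nabla_{j)}\Phi$, and then inserting the critical point condition $\nabla_i W=0$, produces a closed expression for $\mathcal{L}W$ at the maximum.

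The main obstacle is the quadratic gradient term $\ddot F^{pq,rs}h_{pq,1}h_{rs,1}$ appearing in \eqref{bigresult}: because $F=\sigma_k^{1/\beta}$ with $0<\beta\leq 1$ is not concave in the second fundamental form when $\beta<k$, this contribution carries the wrong sign and must be absorbed. The remedy is to introduce $G=\sigma_k^{1/k}$ so that $F=G^{k/\beta}$, apply \lemref{inverse.concave} to $G$, and then track the chain-rule factors coming from the exponent $k/\beta$; combined with $\nabla_i W=0$ this yields a gradient inequality with favorable sign. The genuinely negative contribution of the form $-c\,\dot F^{pp}(\tilde h^{\xi\xi})^{2}$, generated by the $\Phi(h^2)_i^j$ term in $\partial_t h_i^j$ together with the $-A\log(u-a)$ and $B\phi(r)$ pieces of $W$, then dominates when $\tilde h^{\xi\xi}$ is large. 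The hyperbolic-space corrections specific to this setting (the extra $g_{ij}$ terms in Simons' identity and in \eqref{bigresult}, and the $\gamma\phi'h_{ij}$ term arising from the conformal vector field $V$) are at most linear in $\tilde h^{\xi\xi}$, and are absorbed using the uniform bounds on $r$, $u$ and $\sigma_k$ established in \lemref{C0estimate}, \corref{C1estimate2}, \propref{C2estimate1} and \propref{C2estimate2}. This forces $\mathcal{L}W<0$ whenever $\tilde h^{\xi\xi}$ is sufficiently large, and the maximum principle then yields the desired positive lower bound for $\kappa_{\min}$.
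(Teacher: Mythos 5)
Your overall strategy coincides with the paper's: reduce to a positive lower bound on $\kappa_{\min}$, apply the maximum principle to the largest eigenvalue $\tilde h_1^1$ of the inverse Weingarten map, and absorb the bad gradient term $\ddot F^{pq,rs}h_{pq,1}h_{rs,1}$ by writing $F=G^{k/\beta}$ with $G=\sigma_k^{1/k}$ and invoking the inverse-concavity inequality of Lemma~\ref{inverse.concave}. Your reduction step, using $\sigma_k\geq\tbinom{n-1}{k-1}\kappa_j\kappa_{\min}^{k-1}$ together with Proposition~\ref{C2estimate2} and the asserted lower bound on $\kappa_{\min}$, is a clean and correct way of making explicit what the paper states only briefly. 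The treatment of the non-concavity of $F$, including the chain-rule bookkeeping between $\ddot F$ and $\ddot G$, also matches.

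Where you diverge from the paper is in the choice of test function and, more importantly, in your diagnosis of where the dominating negative term comes from. The paper runs the maximum principle on $\tilde h_1^1$ directly, with no barrier terms. Your claim that the negative quadratic $-c\,\dot F^{pp}(\tilde h^{\xi\xi})^2$ is ``generated by the $\Phi(h^2)_i^j$ term'' is incorrect: after multiplying $\partial_t h_1^1$ by $-(\tilde h_1^1)^2$, the reaction term $\Phi h_1^1 h_1^1$ contributes only $-\Phi(\tilde h_1^1)^2(h_1^1)^2=-\Phi$, a \emph{bounded} quantity, not a quadratic one in $\tilde h_1^1$. In the paper's computation the crucial term $-\delta(\tilde h_1^1)^2$ comes from two different sources that are specific to the problem at hand: (i) the zero-order hyperbolic reaction term $(\gamma u-\Phi)\delta_i^j$ in Corollary~\ref{evolve.quantities1}, which yields $-\gamma u(\tilde h_1^1)^2$; and (ii) the radial terms in \eqref{bigresult}, which after the completed-square estimate \eqref{second.coefficient} and the hyperbolic identity $\cosh^2 r-\sinh^2 r=1$ collapse (using $|\nabla_1 r|\leq 1$) to $-\tfrac{\alpha-k-\beta}{\beta}\Phi(\tilde h_1^1)^2$, strictly negative precisely because $\alpha>k+\beta$. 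The barriers $-A\log(u-a)+B\phi(r)$ are therefore unnecessary. If you insist on producing the negative term from the barriers via a Tso-type estimate on $u$, you would need an additional argument that the quantity $\sigma_1\sigma_k-(k+1)\sigma_{k+1}$ stays bounded away from zero as $\kappa_{\min}\to 0$ with $\sigma_k$ bounded, which you have not supplied and which is not immediate. As written, your proposal leans on a mechanism that does not in fact produce the stated sign, even though the overall framework is sound and would be rescued by the paper's direct computation.
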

\begin{proof}
	As we have already proved the upper bound of $\sigma_k$, to derive the uniform bounds on the principal curvatures, it suffices to prove the uniform positive lower bound on the principal curvatures
 \begin{equation*}
   \kappa_i(x,t)\geq \frac 1C>0.
 \end{equation*}
 Let $\lambda(x, t)$ denote the maximal principal radii (that is, the reciprocal  of the minimal principal curvature) at $X(x, t)$. In the following, we will prove the upper bound of $\lambda(x,t)$.

 Let $ (x_0,t_0) $ denote the point where $ \lambda(x,t) $ attains its spatial maximum at time $ t=t_0 $. Without loss of generality, we choose a local normal coordinate $\left\{x_{1}, \cdots, x_{n}\right\}$ near $P_{0}=\left(x_{0}, t_{0}\right)$ such that the second fundamental form $h_{ij}$ is diagonal at $P_0$. 
%	\begin{itemize}
%		\item $\left\{\frac{\partial}{\partial x^{1}}, \cdots, \frac{\partial}{\partial x^{n}}\right\}$ is locally orthonormal on $\mathcal{M}_{t_0} .$ That is, $g_{i j}=\delta_{i j}$ when $t=t_{0}$.
%		\item $\nabla{}_{\frac{\partial}{\partial x^{i}}} \frac{\partial}{\partial x^{j}}=0$, if $i \neq j$ at $X\left(x_{0}, t_{0}\right) .$ That is, $h_{i j}=0$, if $i \neq j$, when $x=x_{0}, t=t_{0}$
%	\end{itemize}
	For now, the equation about evolution of geometric quantities, that is, \lemref{evolve.quantities} and \corref{evolve.quantities1} still holds.

	Furthermore, we can assume $\left.\partial_{1}\right|_{\left(x_{0}, t_{0}\right)}$ is an eigenvector with respect to $\lambda\left(x_{0}, t_{0}\right)$, i.e., $\lambda\left(x_{0}, t_{0}\right)=\tilde{h}_{1}^{1}\left(x_{0}, t_{0}\right) .$ Using this coordinate system, we can calculate the evolution of $\tilde{h}_{1}^{1}$ at $\left(x_{0}, t_{0}\right)$.
	
	From
		\begin{align*}
			\frac{\partial}{\partial t} \tilde{h}_{1}^{1}&=-\left(\tilde{h}_{1}^{1}\right)^{2} \partial_{t} h_{1}^{1}, \\
			\nabla_{i} \tilde{h}_{1}^{1}=\frac{\partial \tilde{h}_{1}^{1}}{\partial h_{p}^{q}} \nabla_{i} h_{p}^{q}&=-\tilde{h}_{1}^{p} \tilde{h}_{q}^{1} \nabla_{i} h_{p}{ }^{q}=-\left(\tilde{h}_{1}^{1}\right)^{2} \nabla_{i} h_{11},
		\end{align*}
	and
		\begin{align*}
			\nabla_{j} \nabla_{i} \tilde{h}_{1}^{1} &=\nabla_{j}\left(-\tilde{h}_{1}^{p} \tilde{h}_{q}^{1} \nabla_{i} h_{p}^{q}\right) \\
			&=-\nabla_{j} \tilde{h}_{1}^{p} \tilde{h}_{q}^{1} \nabla_{i} h_{p}^{q}-\tilde{h}_{1}^{p} \nabla_{j} \tilde{h}_{q}^{1} \nabla_{i} h_{p}^{q}-\tilde{h}_{1} p \tilde{h}_{q}^{1} \nabla_{j} \nabla_{i} h_{p}^{q} \\
			&=\tilde{h}_{1}^{r} \tilde{h}_{l}^{p} \tilde{h}_{q}^{1} \nabla_{j} h_{r}^{l} \nabla_{i} h_{p}^{q}+\tilde{h}_{1}^{p} \tilde{h}_{r}^{1} \tilde{h}_{q}^{s} \nabla_{j} h_{s}^{r} \nabla_{i} h_{p}^{q}-\tilde{h}_{1}^{p} \tilde{h}_{q}^{1} \nabla_{j} \nabla_{i} h_{p}^{q} \\
			&=-\left(\tilde{h}_{1}^{1}\right)^{2} \nabla_{j} \nabla_{i} h_{1}^{1}+2\left(\tilde{h}_{1}^{1}\right)^{2} \tilde{h}^{p l} \nabla_{i} h_{1 p} \nabla_{j} h_{1 l},
		\end{align*}
	we get
	\begin{align}
		\nonumber
		\mathcal{L} \tilde{h}_{1}^{1}
		& = \p_{t}\tilde{h}_{1}^{1} - (\phi(r))^{\frac{\alpha}{\beta}}\dot{F}^{ij}\nabla_{i}\nabla_{j}\tilde{h}_{1}^{1}\\
		\label{L.h11.temp1}
		& = -\left(\tilde{h}_{1}^{1}\right)^{2} \partial_{t} h_{1}^{1} +
		(\phi(r))^{\frac{\alpha}{\beta}}\dot{F}^{ij}
		\left(\tilde{h}_{1}^{1}\right)^{2}
		\nabla_{i} \nabla_{j} h_{1}^{1}\nonumber\\
		&\quad  -
		2 (\phi(r))^{\frac{\alpha}{\beta}}\dot{F}^{ij}\left(\tilde{h}_{1}^{1}\right)^{2} \tilde{h}^{p q} \nabla_{i} h_{1 p} \nabla_{j} h_{1 q}.
	\end{align}
	By \corref{evolve.quantities1},
	\begin{equation}
		\begin{aligned}\label{p.t.h11}
			\p_{t} h_{1}^{1}
			= &\nabla_{1}\nabla^{1}\Phi+\Phi \left(h_{1}^{1}\right)^2 -
			\gamma\phi'(r)h_{1}^{1} + (\gamma u - \Phi).
		\end{aligned}
	\end{equation}
	By \eqref{bigresult},
	\begin{align}
		\nabla_{1}\nabla^{1}\Phi
		\nonumber
		=&(\phi(r))^{\frac{\alpha}{\beta}}\dot{F}^{pq}\nabla_{p}\nabla_{q}h_{1}^{1}
		+ (\phi(r))^{\frac{\alpha}{\beta}}\ddot{F}^{pq,rs}h_{rs,1}h_{pq,1} \\
		\nonumber
		&+\left(\frac{\alpha}{\beta}\right)^2\frac{(\phi'(r))^2}{(\phi(r))^2}
		(\nabla_{1}r)^2
		\Phi -
		\frac{\alpha}{\beta}\frac{1}{(\phi(r))^2}(\nabla_{1}r)^2\Phi -
		\frac{\alpha}{\beta}\frac{\phi'(r)}{(\phi(r))^2}uh_{1}^{1}\Phi \\
		\nonumber
		&+\frac{\alpha}{\beta}\frac{(\phi'(r))^2}{(\phi(r))^2}
		\left(
		1-(\nabla_{1}r)^2
		\right)\Phi
		+2\frac{\alpha}{\beta}\phi'(r) (\phi(r))^{\frac{\alpha}{\beta}-1}
		(\nabla_{1}r)(\nabla_{1}F)
		\\
		\nonumber
		&+
		\frac{1}{\beta}(\phi(r))^{\frac{\alpha}{\beta}}(\sigma_{k})^{\frac{1}{\beta}-1}
		\left(\sigma_1\sigma_{k}-(k+1)\sigma_{k+1}\right)h_{1}^{1}
		-\frac{k}{\beta}\Phi\left(h_{1}^{1}\right)^2 \\
		\label{nabla.11.Phi}
		&-\frac{k}{\beta}\Phi +
		\frac{n-k+1}{\beta}(\sigma_{k})^{\frac{1}{\beta}-1}(\phi(r))^{\frac{\alpha}{\beta}}\sigma_{k-1}h_{1}^{1}.
	\end{align}
	Plugging \eqref{p.t.h11} and \eqref{nabla.11.Phi} into \eqref{L.h11.temp1}, we have
	\begin{align}
		\nonumber
		&
		\mathcal{L}\tilde{h}_{1}^{1} \\
		\nonumber
		= &
		-\left(\tilde{h}_{1}^{1}\right)^2
		\left[
		\phi^{\frac{\alpha}{\beta}}\ddot{F}^{pq,rs}h_{rs,1}h_{pq,1}+
		\left(\frac{\alpha}{\beta}\right)^2\frac{(\phi')^2}{\phi^2}(\nabla_{1}r)^2
		\Phi -
		\frac{\alpha}{\beta}\frac{1}{\phi^2}(\nabla_{1}r)^2\Phi
		\right. \\
		\nonumber
		&-
		\frac{\alpha}{\beta}\frac{\phi'}{\phi^2}uh_{1}^{1}\Phi
		+
		\frac{\alpha}{\beta}\frac{(\phi')^2}{\phi^2}
		\left(
		1 - (\nabla_{1}r)^2
		\right)\Phi
		+
		2\frac{\alpha}{\beta}\phi' \phi^{\frac{\alpha}{\beta}-1}
		(\nabla_{1}r)(\nabla_{1}F) \\
		\nonumber &+
		\frac{1}{\beta}\phi^{\frac{\alpha}{\beta}}(\sigma_{k})^{\frac{1}{\beta}-1}
		\left(\sigma_1\sigma_{k}-(k+1)\sigma_{k+1}\right)h_{1}^{1}
		-
		\frac{k}{\beta}\Phi \left(h_{1}^{1}\right)^2
		- \frac{k}{\beta}\Phi \\
		\nonumber
		&\left.+
		\frac{n-k+1}{\beta}\phi^{\frac{\alpha}{\beta}}(\sigma_{k})^{\frac{1}{\beta}-1}
		\sigma_{k-1}h_{1}^{1} +
		\Phi \left(h_{1}^{1}\right)^2 -
		\gamma\phi'h_{1}^{1} +
		(\gamma u - \Phi)
		\right] \\
		\nonumber
		&-
		2\phi^{\frac{\alpha}{\beta}}\dot{F}^{ij}\left(\tilde{h}_{1}^{1}\right)^{2} \tilde{h}^{p q} \nabla_{i} h_{1 p} \nabla_{j} h_{1 q} \\
		\nonumber
		= &
		-\left(\tilde{h}_{1}^{1}\right)^2
		\left[
		\left(\frac{\alpha}{\beta}\right)^2\frac{(\phi')^2}{\phi^2}(\nabla_{1}r)^2
		\Phi -
		\frac{\alpha}{\beta}\frac{1}{\phi^2}(\nabla_{1}r)^2\Phi -
		\frac{\alpha}{\beta}\frac{\phi'}{\phi^2}uh_{1}^{1}\Phi
		\right. \\
		\nonumber
		&+
		\left.
		\frac{\alpha}{\beta}\frac{(\phi')^2}{\phi^2}
		\left(
		1 - (\nabla_{1}r)^2
		\right)\Phi +
		2\frac{\alpha}{\beta}\phi' \phi^{\frac{\alpha}{\beta}-1}
		(\nabla_{1}r)(\nabla_{1}F)
		\right]
		-
		\left(\tilde{h}_{1}^{1}\right)^2\left(\gamma u - \frac{k+\beta}{\beta}\Phi\right) \\
		\nonumber
		& -
		\left(\tilde{h}_{1}^{1}\right)^2 \phi^{\frac{\alpha}{\beta}}
		\left[
		\ddot{F}^{pq,rs}h_{rs,1}h_{pq,1} +
		2\dot{F}^{ij}
		\tilde{h}^{p q} h_{1 p,i} h_{1 q,j}
		\right] -
		\frac{n-k+1}{\beta}\phi^{\frac{\alpha}{\beta}}(\sigma_{k})^{\frac{1}{\beta}-1}\sigma_{k-1}\tilde{h}_{1}^{1}
		\\
		\label{L.h11.temp2}
		& -
		\frac{1}{\beta}\phi^{\frac{\alpha}{\beta}}(\sigma_{k})^{\frac{1}{\beta}-1}
		\left(\sigma_1\sigma_{k}-(k+1)\sigma_{k+1}\right)\tilde{h}_{1}^{1} +
		(\frac{k}{\beta}-1)\Phi +
		\gamma\phi'\tilde{h}_{1}^{1}.
	\end{align}
	Then we denote $G=\sigma_{k}^{\frac{1}{k}}$, thus $F=G^{\frac{k}{\beta}}$. We have
	\begin{align}
		\nonumber
		\dot{F}^{p q}
		&
		=\frac{k}{\beta} G^{\frac{k}{\beta}-1} \dot{G}^{p q} \\
		\label{sigma.G}
		\ddot{F}^{p q, r s}
		&
		=\frac{k}{\beta} G^{\frac{k}{\beta}-1} \ddot{G}^{p q, r s}+\frac{k}{\beta}\left(\frac{k}{\beta}-1\right) G^{\frac{k}{\beta}-2} \dot{G}^{p q} \dot{G}^{r s}
	\end{align}
	Thus for the first term in the third line of \eqref{L.h11.temp2}, we have
	\begin{align}
		\nonumber
		&\ddot{F}^{pq,rs}h_{rs,1}h_{pq,1} +
		2\dot{F}^{ij}
		\tilde{h}^{p q} h_{1 p,i} h_{1 q,j} \\
		\nonumber
		=&\frac{k}{\beta} G^{\frac{k}{\beta}-1}
		\left(
		G^{pq,rs}h_{rs,1}h_{pq,1} +
		2 G^{ij}
		\tilde{h}^{p q} h_{1 p,i} h_{1 q,j}
		\right) +
		\frac{k}{\beta}\left(\frac{k}{\beta}-1\right) G^{\frac{k}{\beta}-2} \dot{G}^{p q} \dot{G}^{r s}h_{pq,1}h_{rs,1} \\
		\label{that.term1}
		=&\frac{k}{\beta} G^{\frac{k}{\beta}-1}
		\left(
		G^{pq,rs}h_{rs,1}h_{pq,1} +
		2 G^{ij}
		\tilde{h}^{p q} h_{1 p,i} h_{1 q,j}
		\right) +
		\frac{k}{\beta}\left(\frac{k}{\beta}-1\right) G^{\frac{k}{\beta}-2} \left(\nabla_{1}G\right)^2.
	\end{align}
	By \lemref{inverse.concave} and Codazzi equation, we have
	\begin{equation}\label{that.term2}
		G^{pq,rs}h_{rs,1}h_{pq,1} +
		2 G^{ij}
		\tilde{h}^{p q} h_{1 p,i} h_{1 q,j}
		\geq
		2 G^{-1} \left( G^{pq} h_{pq,1} \right)^2
		=
		2 G^{-1} \left( \nabla_{1}G \right)^2.
	\end{equation}
	Combining \eqref{that.term1} and \eqref{that.term2}, we get
	\begin{align}
		\nonumber
		\left(\tilde{h}_{1}^{1}\right)^2 \phi^{\frac{\alpha}{\beta}}
		\left[\ddot{F}^{pq,rs}h_{rs,1}h_{pq,1} +
		2\dot{F}^{ij}
		\tilde{h}^{p q} h_{1 p,i} h_{1 q,j} \right]
		\geq
		&
		\frac{k}{\beta}\left(\frac{k}{\beta}+1\right)
		\left(\tilde{h}_{1}^{1}\right)^2 \phi^{\frac{\alpha}{\beta}}G^{\frac{k}{\beta}}G^{-2}
		\left(\nabla_1 G\right)^2\\
		\label{that.term}
		=&\frac{k}{\beta}\left(\frac{k}{\beta}+1\right)
		\left(\tilde{h}_{1}^{1}\right)^2\Phi G^{-2}\left(\nabla_1 G\right)^2
	\end{align}
	Substituting \eqref{that.term} into \eqref{L.h11.temp2} and observing that $\phi'^2-\phi^2=1$, we have
	\begin{align*}\label{L.h11.temp3}
		&
		\mathcal{L}\tilde{h}_{1}^{1} \\
		\leq &
		-\left(\tilde{h}_{1}^{1}\right)^2
		\left[
		\left(\frac{\alpha}{\beta}\right)^2\frac{(\phi')^2}{\phi^2}(\nabla_{1}r)^2
		\Phi -
		\frac{\alpha}{\beta}\frac{(\phi')^2}{\phi^2}(\nabla_{1}r)^2\Phi +
		\frac{\alpha}{\beta}(\nabla_{1}r)^2\Phi
		\right. \\
		& +
		\left.
		\frac{\alpha}{\beta}\frac{(\phi')^2}{\phi^2}
		\left(
		1 - (\nabla_{1}r)^2
		\right)\Phi +
		2\frac{k}{\beta}\frac{\alpha}{\beta}\frac{\phi'}{\phi}G^{-1}
		(\nabla_{1}r)(\nabla_{1}G)
		\Phi
		\right] \\
		& -
		\frac{k}{\beta} \left(\frac{k}{\beta}+1\right) \left(\tilde{h}_{1}^{1}\right)^2
		G^{-2} \left( \nabla_{1} G \right)^2 \Phi
		-
		\left(\tilde{h}_{1}^{1}\right)^2\left(\gamma u - \frac{k+\beta}{\beta}\Phi\right)\\
		&-
		\frac{n-k+1}{\beta}\phi^{\frac{\alpha}{\beta}}(\sigma_{k})^{\frac{1}{\beta}-1}\sigma_{k-1}\tilde{h}_{1}^{1}
		-
		\frac{1}{\beta}\phi^{\frac{\alpha}{\beta}}(\sigma_{k})^{\frac{1}{\beta}-1}
		\left(\sigma_1\sigma_{k}-(k+1)\sigma_{k+1}\right)\tilde{h}_{1}^{1}\\
		& +
		(\frac{k}{\beta}-1)\Phi +
		\gamma\phi'\tilde{h}_{1}^{1} + \frac{\alpha}{\beta} \frac{\phi^{\prime}}{\phi^{2}} u \tilde{h}_{1}^{1} \Phi \\
		= &
		-\left(\tilde{h}_{1}^{1}\right)^2
		\frac{\alpha}{\beta}\frac{(\phi')^2}{\phi^2}
		\left(
		1 - (\nabla_{1}r)^2
		\right)\Phi -
		\left(\tilde{h}_{1}^{1}\right)^2\left(\gamma u - \frac{k+\beta}{\beta}\Phi\right) -
		\left(\tilde{h}_{1}^{1}\right)^2 \frac{\alpha}{\beta}(\nabla_1 r)^2\Phi \\
		& -
		\left(\tilde{h}_{1}^{1}\right)^2\Phi
		\left[
		\frac{\alpha}{\beta}\left(\frac{\alpha}{\beta}-1\right)\frac{(\phi')^2}{\phi^2}
		\left( \nabla_{1} r \right)^2 +
		2\frac{\alpha}{\beta} \frac{k}{\beta} \frac{\phi'}{\phi}\left( \nabla_{1} r \right)
		G^{-1}\left( \nabla_{1} G \right) +
		\frac{k}{\beta}\left(\frac{k}{\beta}+1\right) G^{-2}\left( \nabla_{1} G \right)^2
		\right] \\
		& -
		\frac{n-k+1}{\beta}\phi^{\frac{\alpha}{\beta}}(\sigma_{k})^{\frac{1}{\beta}-1}\sigma_{k-1}\tilde{h}_{1}^{1}
		-
		\frac{1}{\beta}\phi^{\frac{\alpha}{\beta}}(\sigma_{k})^{\frac{1}{\beta}-1}
		\left(\sigma_1\sigma_{k}-(k+1)\sigma_{k+1}\right)\tilde{h}_{1}^{1}\\
		& +
		(\frac{k}{\beta}-1)\Phi +
		\gamma\phi'\tilde{h}_{1}^{1} + \frac{\alpha}{\beta} \frac{\phi^{\prime}}{\phi^{2}} u \tilde{h}_{1}^{1} \Phi \\
		= &
		-\left(\tilde{h}_{1}^{1}\right)^2
		\frac{\alpha}{\beta}\frac{(\phi')^2}{\phi^2}
		\left(
		1 - (\nabla_{1}r)^2
		\right)\Phi -
		\left(\tilde{h}_{1}^{1}\right)^2
		\left(\gamma u - \frac{k+\beta}{\beta}\Phi\right)-
		\left(\tilde{h}_{1}^{1}\right)^2 \frac{\alpha}{\beta}(\nabla_1 r)^2\Phi \\
		& +
		\frac{\alpha}{\beta}
		\left(
		\frac{k\alpha}{\beta^2\left(\frac{k}{\beta}+1\right)} - (\frac{\alpha}{\beta}-1)
		\right)
		\left(\tilde{h}_{1}^{1}\right)^2
		\frac{(\phi')^2}{\phi^2}
		\left( \nabla_{1} r \right)^2\Phi
		\\
		& -
		\left(\tilde{h}_{1}^{1}\right)^2 \frac{k}{\beta} \Phi
		\left[
		\frac{\alpha^2}{\beta^2 \left(\frac{k}{\beta}+1\right)}\frac{(\phi')^2}{\phi^2}
		\left( \nabla_{1} r \right)^2 +
		2\frac{\alpha}{\beta}  \frac{\phi'}{\phi}\left( \nabla_{1} r \right)
		G^{-1}\left( \nabla_{1} G \right) +
		\left(\frac{k}{\beta}+1\right) G^{-2}\left( \nabla_{1} G \right)^2
		\right] \\
		& -
		\frac{n-k+1}{\beta}\phi^{\frac{\alpha}{\beta}}(\sigma_{k})^{\frac{1}{\beta}-1}\sigma_{k-1}\tilde{h}_{1}^{1}
		-
		\frac{1}{\beta}\phi^{\frac{\alpha}{\beta}}(\sigma_{k})^{\frac{1}{\beta}-1}
		\left(\sigma_1\sigma_{k}-(k+1)\sigma_{k+1}\right)\tilde{h}_{1}^{1}\\
		& +
		(\frac{k}{\beta}-1)\Phi +
		\gamma\phi'\tilde{h}_{1}^{1} + \frac{\alpha}{\beta} \frac{\phi^{\prime}}{\phi^{2}} u \tilde{h}_{1}^{1} \Phi.
	\end{align*}
	For these coefficients, we have
	\begin{equation}\label{first.coefficient}
		\frac{k\alpha}{\beta^2\left(\frac{k}{\beta}+1\right)} - (\frac{\alpha}{\beta}-1) = -\frac{\alpha - k - \beta}{k+\beta} < 0,
	\end{equation}
	and
	\begin{align}
		\nonumber
		&\frac{\alpha^2}{\beta^2 \left(\frac{k}{\beta}+1\right)}\frac{(\phi')^2}{\phi^2}
		\left( \nabla_{1} r \right)^2 +
		2\frac{\alpha}{\beta}  \frac{\phi'}{\phi}\left( \nabla_{1} r \right)
		G^{-1}\left( \nabla_{1} G \right) +
		\left(\frac{k}{\beta}+1\right) G^{-2}\left( \nabla_{1} G \right)^2 \\
		\label{second.coefficient}
		= &
		\left(\frac{k}{\beta}+1\right)
		\left( \frac{\alpha}{k+\beta}\frac{\phi'}{\phi}\nabla_{1}r +
		G^{-1}\nabla_{1}G
		\right)^2
		\geq 0,
	\end{align}
	Plugging \eqref{first.coefficient} and \eqref{second.coefficient} into the inequality of $ \mathcal{L}\tilde{h}_{1}^{1} $ above, and use the boundness of $ r $, $ u $ and $ \Phi $, we have
	\begin{align}\label{L.h11.temp4}
		\nonumber
			\mathcal{L}\tilde{h}_{1}^{1}
			\leq
			&\left(\tilde{h}_{1}^{1}\right)^2
			\left(
			- \frac{\alpha}{\beta}\frac{(\phi')^2}{\phi^2}\Phi
			+ \frac{\alpha}{\beta}\frac{(\phi')^2}{\phi^2}(\nabla_1r)^2\Phi
			- \frac{\alpha}{\beta}(\nabla_1r)^2\Phi
			- \gamma u + \frac{k+\beta}{\beta}\Phi
			\right)\\
			&+ C \tilde{h}_{1}^{1} + C,
	\end{align}
	for some constant $ C>0 $.
	
	Since
	$$
	\nabla_1r = \frac{\left\langle X_1,V \right\rangle}{\phi} \leq \frac{|X_1||V|}{|V|} = 1,
	$$
	we can estimate the coefficient of $ \left(\tilde{h}_{1}^{1}\right)^2 $ in \eqref{L.h11.temp4} as following
	\begin{align}
		\nonumber
		&- \frac{\alpha}{\beta}\frac{(\phi')^2}{\phi^2}\Phi
		+ \frac{\alpha}{\beta}\frac{(\phi')^2}{\phi^2}(\nabla_1r)^2\Phi
		- \frac{\alpha}{\beta}(\nabla_1r)^2\Phi
		- \gamma u + \frac{k+\beta}{\beta}\Phi \\
		=
		\nonumber
		&- \frac{\alpha}{\beta}\frac{(\phi')^2}{\phi^2}\Phi
		+ \frac{\alpha}{\beta}\frac{1}{\phi^2}(\nabla_1r)^2\Phi
		+ \frac{k+\beta}{\beta}\Phi
		- \gamma u \\
		\leq
		\nonumber
		& - \frac{\alpha}{\beta}\frac{(\phi')^2}{\phi^2}\Phi
		+ \frac{\alpha}{\beta}\frac{1}{\phi^2}\Phi
		+ \frac{k+\beta}{\beta}\Phi
		- \gamma u \\
		\label{delta}
		=
		&-\frac{\alpha-k-\beta}{\beta}\Phi - \gamma u
		\leq
		-\delta,
	\end{align}
	for a constant $ \delta>0 $, by the lower bounds of $ u $ and $ \Phi $.
	
	Plugging \eqref{delta} into \eqref{L.h11.temp4}, we get
	\begin{equation}
		\mathcal{L}\tilde{h}_{1}^{1}
		\leq
		-\delta\left(\tilde{h}_{1}^{1}\right)^2 +
		C \tilde{h}_{1}^{1} + C,
	\end{equation}
	for some positive constants $ \delta $, and $ C $.
	Thus when $ \tilde{h}_{1}^{1} $ is large enough, $ \mathcal{L}\tilde{h}_{1}^{1} \leq 0 $, so $ \tilde{h}_{1}^{1} $ is bounded from above. That is, the principal radii is bounded from above, which implies the principal curvature is bounded from below. Moreover, by \propref{C2estimate2}, $ \sigma_{k} $ is bounded from above, thus the principal curvature is bounded from both above and below.
\end{proof}

\section{Long time existence and convergence}\label{sec:7}

Now we have obtained the a priori estimates of flow \eqref{flow}, with the initial hypersurfaces mentioned in Theorem \ref{k-convex.converge} and Theorem \ref{u.convex.converge}. From a priori estimates, these flows have short time existence. Using the $C^{2}$ estimates given in \propref{C2estimate3}, \propref{C2estimate5}, we can get the $C^{2, \lambda}$ estimate of the scalar equation \eqref{hradial.eq} by using Theorem 6 in Andrews \cite{Andrews-2004}, where $\lambda\in (0,1)$. Note that as $0<\beta\leq 1$ the equation \eqref{hradial.eq} is in general not concave with respect to the second spatial derivatives and the result of Krylov \cite{Krylov-1987} can not be applied directly. Then the parabolic Schauder theory \cite{Lieb96} implies the $C^{k,\lambda}$ estimates for all $k\geq 2$. Hence, we get the long time existence of these flows.
\begin{prop}\label{long.time.exist}
	The smooth solution of the flow \eqref{flow} with the initial hypersuface mentioned in Theorem \ref{k-convex.converge} and Theorem \ref{u.convex.converge} exists for all time $t \in[0,+\infty)$.
\end{prop}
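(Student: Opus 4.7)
The plan is to combine the a priori estimates established in Sections \ref{sec:3}--\ref{sec:6} with standard parabolic regularity theory and a continuation argument. First, the star-shapedness of $\MM_0$ and the positivity of $\sigma_k$ make \eqref{hradial.eq} a strictly parabolic scalar PDE for the radial function $r(\cdot,t)$ on $\SSS^n$, so the standard short-time existence theory produces a maximal smooth solution on some interval $[0,T_{\max})$ with $T_{\max}\in(0,+\infty]$. I would argue by contradiction: assume $T_{\max}<+\infty$, and show that uniform $C^{k,\lambda}$ bounds on $[0,T_{\max})$ allow the flow to be restarted at $T_{\max}$.

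The uniform $C^0$ and $C^1$ bounds on $r$ come from \lemref{C0estimate} and \lemref{C1estimate}, together with \corref{C1estimate2} giving uniform upper and lower bounds on $u$ and $\omega$. For the $C^2$ stage, I split into the two regimes of the main theorems: in the mean convex case ($k=1$, $\alpha>1+\beta$) the upper bound on $|A|$ is given by \propref{C2estimate3}, combined with the lower bound on $\sigma_1$ from \propref{C2estimate1}; in the uniformly convex case ($\alpha>k+\beta$) the two-sided bound on each $\kappa_i$ is supplied by \propref{C2estimate5} together with \propref{C2estimate2}. In either case, these estimates translate to two-sided bounds on the eigenvalues of the spatial Hessian of $r$ through the formulas in \eqref{radial.quantities}, so \eqref{hradial.eq} becomes uniformly parabolic on $[0,T_{\max})$ with uniformly bounded $C^2$ norm.

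The main obstacle is passing from uniform $C^2$ bounds to $C^{2,\lambda}$ bounds. Since $F=\sigma_k^{1/\beta}$ is not concave (nor convex) in the second fundamental form when $0<\beta<1$, the classical Evans--Krylov theorem \cite{Krylov-1987} does not apply. I would resolve this via Theorem 6 of Andrews \cite{Andrews-2004}, which yields an interior Hölder estimate on the second spatial derivatives of $r$ for a class of fully nonlinear parabolic equations under exactly the hypotheses we have established, namely uniform parabolicity plus uniform $C^2$ bounds (the structural conditions there do not require concavity). This produces a uniform $C^{2,\lambda}$ estimate on $[0,T_{\max})$ for some $\lambda\in(0,1)$.

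Once the $C^{2,\lambda}$ bound is in hand, the equation \eqref{hradial.eq} can be viewed as a linear parabolic equation in $r$ with uniformly Hölder continuous coefficients, so the parabolic Schauder theory of Lieberman \cite{Lieb96}, applied iteratively, yields uniform $C^{k,\lambda}$ bounds for every $k\geq 2$ on $[0,T_{\max})$. Arzelà--Ascoli then gives a smooth limit hypersurface $\MM_{T_{\max}}$, which inherits star-shapedness, $k$-convexity, and (in the second theorem) uniform convexity from the uniform positivity bounds on $u$ and on the principal curvatures preserved in the limit. Applying the short-time existence result with $\MM_{T_{\max}}$ as new initial datum extends the smooth solution past $T_{\max}$, contradicting maximality. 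Hence $T_{\max}=+\infty$, proving the proposition.
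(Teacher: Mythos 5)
Your proposal is correct and follows essentially the same route as the paper: short-time existence of the scalar parabolic equation, uniform $C^0$ and $C^1$ bounds from Section \ref{sec:3}, uniform parabolicity and $C^2$ bounds from Propositions \ref{C2estimate1}--\ref{C2estimate5}, the Hölder estimate from Theorem~6 of Andrews \cite{Andrews-2004} (precisely because $F=\sigma_k^{1/\beta}$ is not concave when $0<\beta<1$ so Evans--Krylov cannot be invoked), and Schauder iteration via \cite{Lieb96} for all higher derivatives. The only difference is that you spell out the continuation-by-contradiction argument at $T_{\max}$ explicitly, which the paper leaves implicit.
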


We now complete the proofs of Theorem \ref{k-convex.converge} and Theorem \ref{u.convex.converge}.
\begin{proof}
	By \eqref{p.nabla.varphi} in the proof of \lemref{C1estimate},
	\begin{equation}
		\p_{t}\left(\frac{1}{2} |\bar{\nabla} \varphi|^{2} \right) \leq
		-\frac{\alpha-k-\beta}{\beta}(\sinh(-\varphi))^{-\frac{\alpha}{\beta}}\cosh(-\varphi)\sigma_{k}^{\frac{1}{\beta}}\sqrt{1+|\bar{\nabla} \varphi|^2}|\bar{\nabla}\varphi|^2.
	\end{equation}
	From \lemref{C0estimate} and \propref{C2estimate1} we know there exists a positive constant $ c $, such that
	\begin{equation}
		\p_{t}\left( |\bar{\nabla} \varphi|^{2} \right) \leq
		-c |\bar{\nabla}\varphi|^2.
	\end{equation}
	thus  $\max_{\mathbb{S}^{n}}|\bar{\nabla}\varphi|$ converges to $ 0 $ exponentially fast, which implies $\max_{\mathbb{S}^{n}}|\bar{\nabla} r|$ converges to $ 0 $ exponentially fast.
	
	The long time existence of the flow \eqref{flow} has been shown in \propref{long.time.exist}. Now we prove that the hypersurfaces converge to a sphere along the flow \eqref{flow}. As  $\max_{\mathbb{S}^{n}}|\bar{\nabla} r|$ converges to 0 exponentially fast when $\alpha > \beta+k$ and $n \geq 2$, we know $\max _{\mathbb{S}^{n}}\left|\bar{\nabla}^{l} r\right|$ decays exponentially to 0 for any $l \geq 1$ from the interpolation inequality and the a priori estimates we have made.
	When $\alpha>\beta+k$, we claim that the radial function $r$ converges to $ \hat{r} $ . Since $\MM_{0}$ is closed and star-shaped, it can be bounded by two spheres centred at the origin, we denote them as $\mathbb{S}^{n}\left(a_{1}\right)$ and $\mathbb{S}^{n}\left(a_{2}\right), a_{1}<a_{2}$. In the proof of \lemref{C0estimate}, we know the radial function of $M_{t}$ can be bounded by $a_{1}(t)$ and $a_{2}(t)$ by the well-known comparison principle, where
	\begin{equation}
		\frac{d a_{i}(t)}{d t} = -\gamma\sinh(a_{i}(t))\left( \eta(a_{i}(t)) - \eta(\hat{r}) \right), i =1,2,
	\end{equation}
	where $ \eta $ is the monotically increasing function $ \eta(R) = \sinh(R)^{\frac{\alpha-k-\beta}{\beta}} \cosh(R)^{\frac{k}{\beta}} $, by estimation of such ODEs, we see $ a_1(t) $ and $ a_2(t) $ tends to $ \hat{r} $ as $ t \rightarrow +\infty $, which implies that the radial function of $\MM_{t}$ converges to 1 in $C^{0}$ norm. Combining the estimates of $|\bar{\nabla} r|$, we know $\MM_{t}$ converges to $\mathbb{S}^{n}(\hat{r})$ exponentially fast.
\end{proof}

Finally, we conclude the paper with the following remark and question. 

\begin{rem}
By the same argument, the curvature bounds in Section \ref{sec:6} can also be obtained if we replace $\sigma_k^{1/k}$ by a general $1$-homogeneous symmetric curvature function $G(\kappa)$ which is inverse-concave and with dual $G_*$ vanishing on the boundary of the positive cone $\Gamma_n^+$, where $G_*(z_1,\cdots,z_n)=G(z_1^{-1},\cdots,z_n^{-1})^{-1}$. We focus $G=\sigma_k^{1/k}$ in this paper because of its relationship with the prescribed curvature measure problems in hyperbolic space. 
\end{rem}

\begin{ques}
Is it possible to obtain the curvature estimate for the $k$-convex solution of the flow \eqref{hflow} with $k=2,\cdots,n$?
\end{ques}

\bibliographystyle{amsplain}
%\bibliography{refbib}

\end{document}